\documentclass[pdflatex,sn-mathphys-num]{sn-jnl}% Math and Physical Sciences Numbered Reference Style
\usepackage{graphicx}%
\usepackage{multirow}%
\usepackage{amsmath,amssymb,amsfonts}%
\usepackage{amsthm}%
\usepackage{mathrsfs}%
\usepackage[title]{appendix}%
\usepackage{xcolor}%
\usepackage{textcomp}%
\usepackage{manyfoot}%
\usepackage{booktabs}%
\usepackage{algorithm}%
\usepackage{algorithmicx}%
\usepackage{algpseudocode}%
\usepackage{listings}%
\usepackage{float}%
\usepackage{tikz}%
\usepackage{url}%
\theoremstyle{thmstyleone}%
\newtheorem{theorem}{Theorem}[section]%  meant for continuous numbers
\newtheorem{proposition}[theorem]{Proposition}% 
\newtheorem{cor}[theorem]{Corollary}
\newtheorem{lem}[theorem]{Lemma}
\theoremstyle{thmstyletwo}%
\newtheorem{remark}{Remark}%

\theoremstyle{thmstylethree}%
\numberwithin{equation}{section}
\begin{document}

\title[Length-constrained and length-penalised curve diffusion flows of planar curves inside cones]{Length-constrained and length-penalised curve diffusion flows of planar curves inside cones}

%%=============================================================%%
%% GivenName	-> \fnm{Joergen W.}
%% Particle	-> \spfx{van der} -> surname prefix
%% FamilyName	-> \sur{Ploeg}
%% Suffix	-> \sfx{IV}
%% \author*[1,2]{\fnm{Joergen W.} \spfx{van der} \sur{Ploeg} 
%%  \sfx{IV}}\email{iauthor@gmail.com}
%%=============================================================%%

\author*[1,2]{\fnm{Mashniah A.} \sur{Gazwani}}\email{magazwani@iau.edu.sa}
\equalcont{These authors contributed equally to this work.}
\author[2, 3]{\fnm{James A.} \sur{McCoy}}\email{ James.McCoy@newcastle.edu.au, James.McCoy@rmit.edu.au}
\equalcont{These authors contributed equally to this work.}
\email{mashniah.gazwani@uon.edu.au}
\affil*[1]{\orgdiv{Department of Mathematics}, \orgname{ College of Science, Imam Abdulrahman Bin Faisal University}, \orgaddress{\street{Prince Naif Road}, \city{Dammam}, \postcode{31441}, \state{Eastern Province}, \country{Kingdom of Saudi Arabia}}}

\affil[2]{\orgdiv{School of  Computer and Information Sciences}, \orgname{The University of Newcastle}, \orgaddress{\street{University Drive}, \city{Callaghan}, \postcode{2308}, \state{New South Wales}, \country{Australia}}}

\affil[3]{\orgdiv{Department of Mathematical and Geospatial Sciences}, \orgname{Royal Melbourne Institute of Technology}, \orgaddress{\street{124 La Trobe Street}, \city{Melbourne}, \postcode{3000}, \state{Victoria}, \country{Australia}}}

%%==================================%%
%% Sample for unstructured abstract %%
%%==================================%%

\abstract{We study families of smooth, embedded, regular planar curves
 $ \alpha : \left [-1,1  \right ]\times \left [0,T  \right )\to \mathbb{R}^{2}$ with generalised Neumann boundary conditions inside cones, satisfying three variants of the fourth-order nonlinear curve diffusion flow: (1) curve diffusion flow with a length penalisation, (2, 3) two forms of constrained curve diffusion flow with fixed length.  We prove in cases (2) and (3) for cone angle less than $\pi$, if the initial curve has small oscillation of curvature and the initial curve is sufficiently far from the cone tip, then the solution exists for all time and converges exponentially in the $C^\infty$-topology to a circular arc with the same length as the initial curve.  In case (1), a similar result holds under suitable rescaling.  In all cases, the limiting arc is centred at the cone tip. }

\keywords{Curve diffusion flow,  fourth-order geometric evolution equation, fourth-order parabolic equation, Neumann boundary condition}

%%\pacs[JEL Classification]{D8, H51}

%%\pacs[MSC Classification]{35A01, 65L10, 65L12, 65L20, 65L70}

\maketitle

\section{Introduction}\label{sec1}
The constrained curve diffusion flows considered in this article are fourth-order nonlinear geometric evolution equations based upon the original curve diffusion flow, with a lower-order term added to the normal speed which is either a length penalisation or has the effect of preserving length under the evolution.  The original curve diffusion flow of closed plane curves was considered in \cite{Wheeler2013}.  Together with Wheeler and Wu, the second author considered a length-preserving curve diffusion flow for closed curves in \cite{MWY}.  In both cases it was shown that initial curves suitably geometrically-close to a circle converge asymptotically in infinite time to a circle.

In a recent paper \cite{GM24}, the authors considered the original curve diffusion flow for curves with generalised Neumann boundary conditions (see Section 2 for the precise definition) inside cones.  For initial curves of sufficiently small oscillation of curvature, it was shown that, provided neither end of the curve reaches the cone tip, a solution exists for all time and converges exponentially to a unique circular arc, centred at the cone tip, bounding together with the sides of the cone the same area as the initial curve.  It is worth noting that, as compared with the case of closed curves in \cite{Wheeler2013}, where smallness conditions on both the oscillation of curvature and isoperimetric ratio of the initial curve were required, in \cite{GM24}, and in this article, only a small oscillation of curvature condition is required, since the problematic term in the evolution of the oscillation of curvature can be handled differently when the cone angle is less than $\pi$.

In this article we prove similar behaviour for the length-penalised and length-constrained curvature flows.  It is easy to see by examining the behaviour of circular arcs that the length-penalised flow tends to drive solutions to the cone tip in finite time.  While it is common to add a length penalisation term to the elastic energy to avoid solutions expanding to infinity (see, e.g. \cite{GM25} for the setting in cones), to our knowledge the addition of a lower order penalisation term to the curve diffusion flow is new.  This may be explained by the fact that the curve diffusion flow naturally preserves the enclosed area for closed embedded curves, or a natural generalisation of this in some other situations.  We suspect however that our approach of adding a penalisation term may be of interest in the future in other situations where it is less clear what the analogue of enclosed area should be.

The structure of this article is as follows.  In Section \ref{S:prelim} we outline the set-up of our flow problem, the boundary conditions and particular speeds of interest.  We define the key quantities and state some fundamental inequalities to be used in the subsequent analysis.  We conclude the section with some important properties that hold for each of the flows under consideration.  In the subsequent three sections we analyse the three flows in turn, highlighting the key differences in each case. 

We remark that very recently we learned of a new result by Hiroi and Okabe \cite{HO} concerning the curve diffusion flow between skew lines, which is a set-up more general that the cone presented here.  We believe that looking at the associated flows in this more general setting is an interesting question for future work.

\section{Preliminaries} \label{S:prelim}
 \begin{figure}[H] 
  \centering

\tikzset{every picture/.style={line width=0.75pt}} %set default line width to 0.75pt        

\begin{tikzpicture}[x=0.75pt,y=0.75pt,yscale=-1,xscale=1]
%uncomment if require: \path (0,310); %set diagram left start at 0, and has height of 310

%Straight Lines [id:da330280283577162] 
\draw    (82.33,86) -- (200,230) ;
%Straight Lines [id:da732524810594248] 
\draw    (300.33,83) -- (200,230) ;
%Shape: Right Angle [id:dp9705069174393858] 
\draw   (139.91,140.82) -- (145.69,147.75) -- (138.11,154.43) ;
%Curve Lines [id:da9304722164649084] 
\draw [color={rgb, 255:red, 12; green, 84; blue, 234 }  ,draw opacity=1 ]   (132.33,147.5) .. controls (156.33,126.5) and (190.33,68) .. (209.33,72.5) .. controls (228.33,77) and (245.33,125) .. (264.33,136.5) ;
%Shape: Right Angle [id:dp37785249528441867] 
\draw   (257.84,144.27) -- (250.33,138) -- (256.82,130.23) ;
%Curve Lines [id:da779885922178895] 
\draw    (168.33,190) .. controls (193.33,165) and (254,180) .. (252.5,230) ;
%Straight Lines [id:da5942750271612008] 
\draw [color={rgb, 255:red, 208; green, 2; blue, 27 }  ,draw opacity=1 ]   (132.33,147.5) -- (113.78,165.12) ;
\draw [shift={(112.33,166.5)}, rotate = 316.47] [color={rgb, 255:red, 208; green, 2; blue, 27 }  ,draw opacity=1 ][line width=0.75]    (10.93,-3.29) .. controls (6.95,-1.4) and (3.31,-0.3) .. (0,0) .. controls (3.31,0.3) and (6.95,1.4) .. (10.93,3.29)   ;
%Straight Lines [id:da7173654921964416] 
\draw [color={rgb, 255:red, 208; green, 2; blue, 27 }  ,draw opacity=1 ]   (264.33,136.5) -- (285.66,150.41) ;
\draw [shift={(287.33,151.5)}, rotate = 213.11] [color={rgb, 255:red, 208; green, 2; blue, 27 }  ,draw opacity=1 ][line width=0.75]    (10.93,-3.29) .. controls (6.95,-1.4) and (3.31,-0.3) .. (0,0) .. controls (3.31,0.3) and (6.95,1.4) .. (10.93,3.29)   ;
%Straight Lines [id:da5338141504602664] 
\draw [color={rgb, 255:red, 208; green, 2; blue, 27 }  ,draw opacity=1 ]   (132.33,147.5) -- (110.62,121.53) ;
\draw [shift={(109.33,120)}, rotate = 50.09] [color={rgb, 255:red, 208; green, 2; blue, 27 }  ,draw opacity=1 ][line width=0.75]    (10.93,-3.29) .. controls (6.95,-1.4) and (3.31,-0.3) .. (0,0) .. controls (3.31,0.3) and (6.95,1.4) .. (10.93,3.29)   ;
%Straight Lines [id:da6805372914439409] 
\draw [color={rgb, 255:red, 208; green, 2; blue, 27 }  ,draw opacity=1 ]   (264.33,136.5) -- (283.25,107.18) ;
\draw [shift={(284.33,105.5)}, rotate = 122.83] [color={rgb, 255:red, 208; green, 2; blue, 27 }  ,draw opacity=1 ][line width=0.75]    (10.93,-3.29) .. controls (6.95,-1.4) and (3.31,-0.3) .. (0,0) .. controls (3.31,0.3) and (6.95,1.4) .. (10.93,3.29)   ;
%Curve Lines [id:da2095801919797763] 
\draw    (210.33,214) .. controls (216.33,214) and (224.92,221) .. (224.33,230) ;

% Text Node
\draw (203,47.4) node [anchor=north west][inner sep=0.75pt]  [color={rgb, 255:red, 15; green, 87; blue, 233 }  ,opacity=1 ]  {$\alpha $};
% Text Node
\draw (302,54.4) node [anchor=north west][inner sep=0.75pt]    {$\overline{\beta _{2}}$};
% Text Node
\draw (66,58.4) node [anchor=north west][inner sep=0.75pt]    {$\overline{\beta _{1}}$};
% Text Node
\draw (193,157.4) node [anchor=north west][inner sep=0.75pt]    {$\Theta _{1}$};
% Text Node
\draw (220,202.4) node [anchor=north west][inner sep=0.75pt]    {$\Theta _{2}$};
% Text Node
\draw (92.73,117.62) node [anchor=north west][inner sep=0.75pt]  [rotate=-357.95]  {$\nu $};
% Text Node
\draw (292.73,106.62) node [anchor=north west][inner sep=0.75pt]  [rotate=-357.95]  {$\nu $};
% Text Node
\draw (90,161.4) node [anchor=north west][inner sep=0.75pt]    {$e_{\mathbf{-}}$};
% Text Node
\draw (292,146.4) node [anchor=north west][inner sep=0.75pt]    {$e_{\mathbf{+}}$};
% Text Node
\draw (100,223.72) node [anchor=north west][inner sep=0.75pt]    {$-----------------$};
% Text Node
\draw (341,223.4) node [anchor=north west][inner sep=0.75pt]  [font=\Large]  {$x$};

\end{tikzpicture}

\caption[The set-up.]{Curve with generalised Neumann boundary conditions in the cone.\footnotemark}
 \label{fig1}
\end{figure} 
 \footnotetext{\textbf{Fig.\ref{fig1}} was designed with the help of Mathcha Editor \url{https://www.mathcha.io/editor}.}

Given an initial smooth, immersed, regular open planar curve with boundary $\alpha_0: \left[ -1, 1\right] \rightarrow \mathbb{R}^2$, we consider the family of curves $\alpha\left( \cdot, t\right)$ moving with normal velocity $F$: \begin{equation}\label{f}
    \frac{\partial }{\partial t}\alpha  =-\,F[k]\,\nu,
\end{equation}
where $F[k]$ is the normal speed of the curve and $\alpha( \cdot, 0) = \alpha_0$.
Denoting the spatial parameter by $u$ and tangent vector by $\alpha_u$, above $\nu$ is the normalised normal vector to $\alpha$ obtained by rotating $\alpha_u$ counter clockwise by $\frac{\pi}{2}$.  Furthermore, $k=-\left<\kappa, \nu  \right>=-\left<  \alpha_{ss}, \nu \right >$  is the scalar curvature of $\alpha$, where $\left<  \cdot, \cdot \right >$ is the usual Euclidean inner product in $\mathbb{R}^2$ and $s$ is the arc-length parameter of curve $\alpha$, defined via
\begin{equation} \label{s}
  s\left( u, t\right) = \int_{-1}^{u} \left| \alpha_u \left( \tilde u, t\right) \right| d\tilde u \mbox{.}
\end{equation}
The sign in \eqref{f} is chosen such that the equation is parabolic in the generalised sense.  Let us further denote by $\tau =\displaystyle{\frac{\alpha_{u} }{\left|\alpha _{u} \right|}}=\alpha _{s}$ the unit tangent vector field along $\alpha$.

The three speeds $F$ of interest in this article are
\begin{enumerate}
    \item[\textnormal{(1)}] $F= -k_{ss} + \lambda\, k$ for any constant $\lambda>0$  (the `length-penalised curve diffusion flow');
    \item[\textnormal{(2)}] $F= -k_{ss} +\lambda_1(t) $; 
    \item[\textnormal{(3)}] $F= -k_{ss} +\lambda_2(t)\,k $\\
    for specific functions $\lambda_1(t)$ and $\lambda_2(t)$ chosen such that the length of the evolving curve is fixed (`length-constrained curve diffusion flows')
\end{enumerate}

The boundary conditions for our evolving planar curve comprise the classical Neumann condition and a no-curvature-flux condition.  Fix $0\leq \Theta_2 < \Theta_1 < 2\pi$ and let $\beta_{i} :\left [ 0,\infty  \right )\to \mathbb{R}^{2}~ (i=1,2)$, such that $ \beta_{i}\left ( \rho  \right ) = (\rho \cos \Theta_i, \rho \sin \Theta_i),$ and denote the image sets by 
$$\bar{\beta}_{i} :\left\{\left ( x,y \right )\in \mathbb{R}^{2}: \left ( x,y \right )=\left ( \rho \cos\Theta _{i}, \rho\sin \Theta _{i}\right ), \rho > 0 \right\}.$$
The set $\bar{\beta}_{1} \cup \bar{\beta}_{2}$ forms the boundary for our evolving curve, namely a cone in $\mathbb{R}^2$ with the tip at the origin and cone angle $\Theta_1 - \Theta_2 < 2\pi$.  We remark that in our recent work \cite{GM24} on the curve diffusion flow, we required the cone angle to be strictly less than $\pi$ in order to have a preserved smallness condition on the oscillation of curvature under the flow.  The same is true in this article.  It is an interesting question whether there is a different smallness quantity or another approach that could facilitate a result in case of cone article larger than $\pi$.  Indeed, for the free elastic flow \cite{GM25} we obtained a convergence result (under rescaling) with a different smallness condition but restriction on the cone angle at all.

The cone boundaries and initial curve $\alpha_0$ enclose a finite area that we call the `interior'.  This region is assumed to be contained within the set 
$$\left\{ \left( x, y\right) \in \mathbb{R}^2: \left( x, y\right) = \left( \rho \cos \Theta, \rho \sin \Theta\right), \rho >0, \Theta_2 < \Theta < \Theta_1 \right\} \mbox{,}$$
 that is, `inside' the cone and such that $\alpha_{0} \left ( -1 \right )\in \bar{\beta} _{1}$ and $ \alpha_{0} \left ( 1 \right )\in \bar{\beta} _{2}$.  Moreover, $\alpha_0$ is assumed to meet the boundary at each endpoint perpendicularly (the classical Neumann boundary condition) and be such that the curvature $k_0$ of $\alpha_0$, computed using one-sided derivatives, satisfies
$$\left( k_0\right)_s\left( -1\right) = \left( k_0\right)_s\left( 1 \right) = 0 \mbox{,}$$
the so-called \emph{no-curvature-flux condition} on the boundary.  The cone tip itself is not included in the specification of the boundary condition, as the normal to the boundary there is not well defined.  We now let the curve evolve under \eqref{f} with \emph{generalised Neumann boundary conditions} as long as this is a well-posed problem.  This means that under the evolution, the classical Neumann boundary condition and no-curvature-flux condition continue to hold.  In view of short-time existence result, the solution will not immediately jump to the cone tip.  It is indeed a key feature of our analysis that we have to avoid the ends of the evolving curve reaching the cone tip, unless they do so in a limiting sense towards the maximal time $T$ of existence of the solution.  In our setting, for the length constrained flows we will have $T=\infty$ under the assumption neither end reaches the cone tip, while for the length penalised flows ($\lambda>0$ constant) we have $T<\infty$.  Our smallness conditions facilitate smooth exponential convergence to the limiting circular arc (under rescaling for length in the case $\lambda>0$ constant) and thus, in particular, the flow speed decays exponentially.  This in turn means the distance travelled by any point on the evolving curve, including the boundary points, is bounded and so if the boundary points are sufficiently far from the cone tip, then they will not reach the cone tip under the evolution.

Denoting by $e_-$ and $e_+$ unit vectors perpendicular to $\bar{\beta} _{1}$ and $\bar{\beta} _{2}$, respectively, 
the boundary conditions ensure that the ends of the evolving curve continue to meet either side of the cone as long as the solution to \eqref{f} exists and
 \begin{equation} \label{E:NBC1}
   \left< \nu\left( -1, t\right), e_- \right> = \left< \nu\left( 1, t\right), e_+ \right> = 0
   \end{equation}
   and
   \begin{equation} \label{E:NBC2}
   k_s\left( \pm 1, t\right) = 0 \mbox{.}
   \end{equation}
   By slight abuse of notation, we will denote quantities associated with the evolving curve using the same symbols, whether they be functions of $u$ and $t$, or $s$ and $t$, as is customary, and should not lead to any confusion.  When we evaluate at endpoints, we will denote the spatial argument using $\pm 1$, but spatial derivatives will be with respect to $s$ and interpreted in the appropriate one-sided sense.  We denote by $k_{s^{n}}$ the $n$-th iterated derivative of $k$ with respect to arc-length and write $k_{s^{n}}^{2}$ for $\left ( k_{s^{n}} \right )^{2}$.

 Throughout the article, we use $L$ to denote the length of $\alpha\left( \cdot, t\right)$ and write
\begin{equation}\label{eql}
    L\left( t\right) = L[\alpha\left( \cdot, t\right) ]=\int_{\alpha }\, ds.
\end{equation}
All integrals will be over the curve $\alpha$ unless otherwise indicated.  We also have the area $A$ of the region bounded by the curve and cone
\begin{equation} \label{E:area}
  A\left( t\right) = A\left[\alpha\left( \cdot, t\right) \right] = +\frac{1}{2} \int_{\alpha } \left< \alpha, \nu \right> ds \mbox{,}
\end{equation}
where the sign is $+$ due to our choice of outer unit normal.  

The average curvature of $\alpha\left( \cdot, t\right)$  is defined as\begin{equation}\label{eqkk}
 \bar{k}[\alpha]:=\displaystyle{\frac{1}{L}}\int_{\alpha } k\,ds.   
\end{equation}
Using this, we define the scale-invariant oscillation of curvature
\begin{equation} \label{E:Kosc}
  K_{\mbox{osc}} := L \int_\alpha \left( k - \overline{k}\right)^2 ds \mbox{.}
\end{equation}
Additionally, the rotation number of $\alpha\left( \cdot, t\right)$ is
\begin{equation} \label{E:omega}
  \omega \left [ \alpha  \right ] :=\frac{1}{2\pi }\int_{\alpha } k\,ds.
  \end{equation}

This may be thought of as the general definition of rotation number
for any curve for which the integral makes sense.  As the rotation number gives the net turning of the tangent vector to $\alpha$ we observe that in our setting 
$$\omega = \frac{\Theta_1 - \Theta_2}{2\pi} \mbox{.}$$

To prove our main theorems, we require some well-known fundamental tools.  First are the standard Poincaré-Sobolev-Wirtinger [PSW] inequalities for curves with boundaries.

\begin{proposition} \label{psw}
Let $L>0$. Let $g:[0,L]\rightarrow \mathbb{R}$ be an absolutely continuous function with $\int_{0}^{L}g(x) \,dx=0.$ Then 
\begin{equation}\label{eqpsw}
    \int_{0}^{L}g^{2}(x)\, dx\leq \frac{L^{2}}{\pi ^{2}}\int_{0}^{L}\left( g_{x} \right)^{2}(x)\, dx.
\end{equation}
Similarly, if $g:[0,L]\rightarrow \mathbb{R}$ is absolutely continuous and $g(0)=g(L)=0$ then 
\begin{equation*}
\int_{0}^{L}g^{2}(x) \,dx\leq \frac{L^{2}}{\pi ^{2}}\int_{0}^{L} \left( g_{x} \right)^{2}(x)\,dx.
\end{equation*}
\end{proposition}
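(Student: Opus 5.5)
The plan is to prove both inequalities by Fourier series on $[0,L]$, using the cosine system for the first and the sine system for the second. In each case it suffices to treat $g_x\in L^2(0,L)$, since otherwise the right-hand side is infinite and there is nothing to prove. Because $g$ is absolutely continuous, $g_x\in L^1$ and $g(x)=g(0)+\int_0^x g_x$, so integration by parts against smooth test functions is legitimate.

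For the mean-zero inequality I expand $g$ in the complete orthogonal system $\{\cos(n\pi x/L)\}_{n\ge 0}$ of $L^2(0,L)$:
\[
g=\sum_{n\ge 0}a_n\cos(n\pi x/L).
\]
The hypothesis $\int_0^L g=0$ says exactly that $a_0=0$. I then expand $g_x$ in the sine system $\{\sin(n\pi x/L)\}_{n\ge1}$, which is also complete in $L^2(0,L)$. Integrating by parts gives
\[
\int_0^L g_x\sin(n\pi x/L)\,dx=-\frac{n\pi}{L}\int_0^L g\cos(n\pi x/L)\,dx .
\]
The boundary terms vanish because $\sin$ is zero at $0$ and $L$. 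Hence the sine coefficients of $g_x$ are $b_n=-(n\pi/L)a_n$. Parseval's identity, with both systems carrying the normalisation $L/2$ for $n\ge1$, then gives
\[
\int_0^L g_x^2=\frac L2\sum_{n\ge1}\frac{n^2\pi^2}{L^2}a_n^2\ \ge\ \frac{\pi^2}{L^2}\,\frac L2\sum_{n\ge1}a_n^2=\frac{\pi^2}{L^2}\int_0^L g^2,
\]
which is \eqref{eqpsw}.

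For the Dirichlet case the roles of the two systems are swapped. I expand $g$ in $\{\sin(n\pi x/L)\}_{n\ge1}$ and $g_x$ in $\{\cos(n\pi x/L)\}_{n\ge0}$. This time the boundary terms in the integration by parts vanish because $g(0)=g(L)=0$. The zeroth cosine coefficient of $g_x$ is $\int_0^L g_x=g(L)-g(0)=0$. Parseval's identity then yields the same inequality.

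The only delicate point is justifying Parseval's identity and the integration by parts for functions that are merely absolutely continuous. Both steps need only $g_x\in L^2$ and the fundamental theorem of calculus for absolutely continuous functions, together with completeness of the sine and cosine systems in $L^2(0,L)$. As a cross-check, the constant is sharp: equality holds for $g=\cos(\pi x/L)$ and $g=\sin(\pi x/L)$ respectively.
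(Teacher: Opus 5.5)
Your proof is correct. The integrations by parts are legitimate for absolutely continuous $g$, and the coefficient relations are right: $b_n=-(n\pi/L)a_n$ in the mean-zero case and $(n\pi/L)c_n$ for the cosine coefficients of $g_x$ in the Dirichlet case. Parseval in the complete cosine and sine systems then gives both inequalities with the sharp constant $L^2/\pi^2$. In the Dirichlet case you do not even need the zeroth cosine coefficient of $g_x$ to vanish, since its contribution to $\int_0^L g_x^2$ is nonnegative and only helps.

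The paper gives no proof of this proposition; it quotes it as a standard fact. So there is no competing route to compare against. Your expansion in the Neumann and Dirichlet eigenfunctions of $-\partial_x^2$ on $[0,L]$ is the classical proof the authors take for granted. Its two cases are exactly what they later apply to $g=k-\bar{k}$ (mean zero) and to $g=k_s$ (vanishing at the endpoints) in arclength parametrisation.
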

\begin{proposition} \label{eq266}
If $g:[0,L]\rightarrow \mathbb{R}$ is absolutely continuous and $g(0)=g(L)=0$ then 
\begin{equation}\label{pp}
\left \| g \right \|_{\infty }^{2}\leq \frac{L}{\pi }\int_{0}^{L}\left( g_{x} \right)^2 (x)\, dx.
\end{equation}
Similarly, if  $g:[0,L]\rightarrow \mathbb{R}$ is absolutely continuous and $\int_{0}^{L}g \,dx=0$ then
\begin{equation}\label{eqppsw}
\left \| g \right \|_{\infty }^{2}\leq \frac{2L}{\pi }\int_{0}^{L}\left( g_{x} \right)^2 (x)\, dx.
\end{equation}
\end{proposition}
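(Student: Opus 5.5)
For the first inequality \eqref{pp}, where $g(0)=g(L)=0$, I would argue by the fundamental theorem of calculus from whichever endpoint is closer. Fix $x\in[0,L]$. Since $g$ vanishes at both endpoints,
\[
g(x)=\int_0^x g_y\,dy=-\int_x^L g_y\,dy .
\]
A plain Cauchy--Schwarz estimate on either piece only gives a constant $L/2$. To get $L/\pi$ I would sharpen it with a weight. Write
\[
g(x)^2=\int_0^x (g^2)_y\,dy = 2\int_0^x g\,g_y\,dy,
\]
and similarly $g(x)^2=-2\int_x^L g\,g_y\,dy$. Averaging the two identities gives
\[
g(x)^2=\int_0^x g g_y\,dy-\int_x^L gg_y\,dy\le \int_0^L |g||g_y|\,dy .
\]
By Cauchy--Schwarz and the second (Dirichlet) inequality in Proposition \ref{psw},
\[
\int_0^L|g||g_y|\,dy\le \Big(\int_0^L g^2\Big)^{1/2}\Big(\int_0^L g_y^2\Big)^{1/2}\le \frac{L}{\pi}\int_0^L g_y^2\,dy .
\]
Taking the supremum over $x$ then yields \eqref{pp}.

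For \eqref{eqppsw}, where $\int_0^L g=0$, the mean-zero condition and continuity give a point $x_0$ with $g(x_0)=0$. I would run the same argument with the fundamental theorem of calculus from $x_0$. For any $x$,
\[
g(x)^2=2\int_{x_0}^x gg_y\,dy\le 2\int_0^L|g||g_y|\,dy .
\]
This time only one-sided integration is available, so the averaging trick does not apply and the factor $2$ survives. Cauchy--Schwarz together with the mean-zero inequality \eqref{eqpsw} of Proposition \ref{psw} then gives
\[
\|g\|_\infty^2\le 2\cdot\frac{L}{\pi}\int_0^L g_y^2 .
\]

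Absolute continuity is exactly what justifies the fundamental theorem of calculus for $g^2$ in both parts, so no regularity issues arise. The only point I see as the real obstacle is the symmetrisation step in the Dirichlet case, which is what replaces the naive constant by $L/\pi$. No optimality of the constants is claimed, so the argument above suffices.
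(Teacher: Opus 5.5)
Your proof is correct in both parts. In the Dirichlet case, the averaged identity $g(x)^2=\int_0^x gg_y\,dy-\int_x^L gg_y\,dy$, followed by Cauchy--Schwarz and the Dirichlet inequality of Proposition \ref{psw}, gives exactly $L/\pi$. In the mean-zero case, the one-sided identity from a zero $x_0$, combined with \eqref{eqpsw}, gives exactly $2L/\pi$; the zero exists by the intermediate value theorem, since $g$ is continuous with mean zero. Absolute continuity of $g^2$, as a product of absolutely continuous functions, justifies the fundamental theorem of calculus. The paper gives no proof of this proposition and simply quotes it as a standard tool, so there is no argument in the paper to compare yours against; yours is the usual one.

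One correction to your commentary: the symmetrisation is not the crux for \eqref{pp}. Combining the two one-sided bounds $g(x)^2\le x\int_0^x g_y^2\,dy$ and $g(x)^2\le (L-x)\int_x^L g_y^2\,dy$ already gives
$g(x)^2\le \frac{x(L-x)}{L}\int_0^L g_y^2\,dy\le \frac{L}{4}\int_0^L g_y^2\,dy$.
This is stronger than \eqref{pp} because $\pi<4$. The detour through the Wirtinger inequality genuinely pays off only in the mean-zero case. There, integrating from $x_0$ with plain Cauchy--Schwarz yields only the constant $L$, which is weaker than $2L/\pi$.
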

It is convenient, as in \cite{DKS02} and subsequent work, to use the following scale-invariant norms: 
 $$\left\| k \right\|_{\ell, p} := \sum_{i=0}^\ell \left\|  k_{s^{i}} \right\|_p$$
 where
 $$\left\|  k_{s^{i}} \right\|_p = L^{i+1-\frac{1}{p}} \left( \int_\alpha \left|  k_{s^{i}} \right|^p ds\right)^{\frac{1}{p}} \mbox{.}$$
 
 The following interpolation inequality for curves with boundary is \cite{DP14} [Lemma 4.3].  The analogous inequalities for closed curves appeared earlier in \cite{DKS02}.  We use the standard notation $P_n^m(k)$ to denote a linear combination of terms each of which contain $n$ factors of $k$ with a total of $m$ derivatives with respect to $s$.
 
\begin{proposition}\label{p}
  Let $\alpha:\left [ -1, 1 \right ]\to \mathbb{R}^{2}$  be a smooth curve with boundary. Then for any term $P_{n}^{m}\left ( k \right )$ with $n\geq 2$  that contains derivatives of $k$ of order at most $\ell-1$,
  $$\int_\alpha \left|P_{n}^{m}\left ( k \right ) \right| ds\leq c\, L^{1-m-n}\left\| k\right\|_{0,2}^{n-p}\left\| k\right\|_{\ell,2}^{p},$$
  where $p=\displaystyle{\frac{1}{\ell}\left ( m+\frac{1}{2} n-1\right )}$ and $c=\left ( \ell,m,n \right ) $. Furthermore, if $m+\frac{n}{2}< 2\ell+1$ then $p<2$ and for any $\varepsilon>0,$
  $$\int_\alpha \left|P_{n}^{m}\left ( k \right ) \right| ds\leq \varepsilon \int_\alpha  k_{s^{\ell}}^{2}ds+c\,\varepsilon^{\frac{-p}{2-p}}\left( \int_\alpha k^{2}ds  \right)^{\frac{n-p}{2-p}}+c \left(\int_\alpha k^{2}ds  \right)^{m+n-1}.$$ 
\end{proposition}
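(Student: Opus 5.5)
The plan is to reduce to the one-dimensional Gagliardo--Nirenberg inequality on the interval $[0,L]$, applied to each derivative $k_{s^i}$, and to obtain the scale factor $L^{1-m-n}$ by rescaling. Since the curve has boundary, the spatial domain in arc length is $[0,L]$. Because both sides of the inequality are scale-invariant once the weights $L^{i+1-1/p}$ are included, I would first rescale so that $L=1$. The factor $L^{1-m-n}$ is exactly the weight needed to make $\int |P_n^m(k)|\,ds$ scale-invariant: under $\alpha\mapsto r\alpha$, we have $k\mapsto k/r$, $\partial_s\mapsto \partial_s/r$ and $ds\mapsto r\,ds$, so the integral scales like $r^{1-m-n}$. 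It therefore suffices to prove the estimate when $L=1$.

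With $L=1$, I would bound a typical monomial $\prod_{j=1}^n k_{s^{i_j}}$, where $\sum_j i_j=m$ and $i_j\le \ell-1$, by H\"older's inequality. This gives
\[
\int \prod_j |k_{s^{i_j}}|\,ds \le \prod_j \|k_{s^{i_j}}\|_{L^{n}}.
\]
For each factor I would then use the Gagliardo--Nirenberg inequality on a bounded interval:
\[
\|k_{s^{i}}\|_{L^{n}} \le c\,\|k\|_{\ell,2}^{\theta_i}\,\|k\|_{0,2}^{1-\theta_i}, \qquad \theta_i=\frac{i+\frac12-\frac1n}{\ell}.
\]
On a bounded interval this holds with the full $W^{\ell,2}$ norm, which is why the lower-order terms in $\|k\|_{\ell,2}$ appear and why no boundary condition on $k$ is needed. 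The requirement $i\le \ell-1$ ensures $\theta_i<1$. Summing the exponents gives $\sum_j\theta_j=(m+\frac n2-1)/\ell=p$, which yields the first inequality. This is the only genuinely analytic step, and I expect it to be the main obstacle. The interval version of Gagliardo--Nirenberg needs an extension argument, or the interior inequality plus lower-order terms, and one has to verify carefully that those lower-order terms are absorbed into $\|k\|_{\ell,2}$ and $\|k\|_{0,2}$ with constants depending only on $(\ell,m,n)$.

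For the second statement, I would first note that $m+\frac n2<2\ell+1$ is equivalent to $p<2$. Applying Young's inequality with exponents $2/p$ and $2/(2-p)$ to $\|k\|_{0,2}^{n-p}\|k\|_{\ell,2}^p$ gives
\[
\varepsilon\|k\|_{\ell,2}^2+c\,\varepsilon^{-p/(2-p)}\|k\|_{0,2}^{2(n-p)/(2-p)}.
\]
Next I would bound $\|k\|_{\ell,2}^2\le c(\|k_{s^\ell}\|_2^2+\|k\|_{0,2}^2)$ by interpolating the intermediate derivatives in $L^2$. Undoing the scaling then produces the terms $\int k_{s^\ell}^2\,ds$ and $(\int k^2\,ds)^{(n-p)/(2-p)}$ with the correct powers of $L$.

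The leftover lower-order contribution $\varepsilon\|k\|_{0,2}^2$ gets multiplied by the remaining power of $L$. To express it without $L$, I would use the relation between $L$ and $\int k^2$ implied by the homogeneity of the two sides, which produces the final term $c(\int k^2\,ds)^{m+n-1}$. I would check the exponents against the scaling to confirm this form before writing out the details.
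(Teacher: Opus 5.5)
Your first part is the argument the paper takes from \cite{DP14} (Lemma 4.3) and \cite{DKS02}: H\"older with $n$ factors in $L^n$, the scale-invariant Gagliardo--Nirenberg inequality on an interval with the full norm $\|k\|_{\ell,2}$, and exponents summing to $p$. That part is fine.

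The gap is in the last step. Homogeneity gives no relation between $L$ and $\int_\alpha k^2\,ds$. What is needed is a genuine lower bound $L\int_\alpha k^2\,ds\geq c_0>0$. For closed curves this is Fenchel's theorem, as in \cite{DKS02}. In the cone it is \eqref{E:Holder}, $L\int_\alpha k^2\,ds\geq 4\pi^2\omega^2$, which comes from the fixed total turning $\int_\alpha k\,ds=2\pi\omega$. Without such a bound the second inequality is false. Take $\ell=1$ and $P_3^0(k)=k^3$, so $p=\frac12$, and a circular arc of length $1$ and curvature $\eta$; this arc sits in a cone of angle $\eta$. The left side is $\eta^3$, while the right side is $c\,\varepsilon^{-1/3}\eta^{10/3}+c\,\eta^4$, which is smaller for small $\eta$. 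So the constant in the last term must also depend on $\omega$, not only on $(\ell,m,n)$.

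The order of your steps also needs to change. Suppose you apply Young to $\|k\|_{0,2}^{n-p}\|k\|_{\ell,2}^p$ first and only then split $\|k\|_{\ell,2}^2\le c(\|k_{s^\ell}\|_2^2+\|k\|_{0,2}^2)$. Once $\varepsilon$ is matched to the unscaled term $\varepsilon\int_\alpha k_{s^\ell}^2\,ds$, the remainder is $c\,\varepsilon L^{-2\ell}\int_\alpha k^2\,ds$. This scales like $\int_\alpha k_{s^\ell}^2\,ds$, not like the left side. Even with the lower bound above, it only becomes $c\,\varepsilon(\int_\alpha k^2\,ds)^{2\ell+1}$, which is not the stated term.

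Split first instead: $\|k\|_{\ell,2}^p\le c(\|k_{s^\ell}\|_2^p+\|k\|_{0,2}^p)$.
\begin{itemize}
\item In the main term the powers of $L$ cancel exactly because $\ell p=m+\frac n2-1$. This gives $L^{1-m-n}\|k\|_{0,2}^{n-p}\|k_{s^\ell}\|_2^p=(\int_\alpha k^2\,ds)^{\frac{n-p}2}(\int_\alpha k_{s^\ell}^2\,ds)^{\frac p2}$.
\item Young with exponents $\frac2p,\frac2{2-p}$ applied to this yields the first two terms of the statement.
\item The remainder is $c\,L^{1-m-n}\|k\|_{0,2}^n=c\,L^{1-m-\frac n2}(\int_\alpha k^2\,ds)^{\frac n2}$, and $1-m-\frac n2\le0$ since $n\geq2$.
\item The bound $L^{-1}\le(4\pi^2\omega^2)^{-1}\int_\alpha k^2\,ds$ then converts the remainder to $c(\omega)(\int_\alpha k^2\,ds)^{m+n-1}$.
\end{itemize}
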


Next, we record some evolution equations for general normal flow speeds $F$.  These are easily derived, for example, in \cite{Wheeler2013}.
\begin{lem}\label{eq of general F}
Under the flows \eqref{f}, we have the following evolution equations:
\begin{enumerate}
 \renewcommand{\labelenumi}{(\roman{enumi})}
 \item $\displaystyle{\frac{\partial }{\partial t} \, ds =  -F\, k\, ds}$
 \item $ \displaystyle{\frac{\mathrm{d} }{\mathrm{d} t}L = - \int_{\alpha } k\, F \, ds;}$
 \item $\displaystyle{ \frac{\mathrm{d} }{\mathrm{d} t} A = - \int_\alpha F\, ds;}$
 \item  For each $\ell \in \mathbb{N} \cup \left\{ 0 \right\}$, we have $\displaystyle{\frac{\partial}{\partial t} k_{s^\ell} = F_{s^{\ell + 2}} + \sum_{j=0}^\ell \left( k\, k_{s^{\ell-j}} F\right)_{s^j} \mbox{;}}$
 \item $\displaystyle{\frac{\mathrm{d} }{\mathrm{d}t} \int_{\alpha } k^2 \, ds =  2 \int_{\alpha } \left( k_{ss} + \frac{1}{2} k^3 \right) F \, ds;}$
 \item  $\displaystyle{\frac{\mathrm{d} }{\mathrm{d} t} \int_{\alpha } \left ( k-\bar{k} \right )^2  ds=2 \int _{\alpha }\left [ k_{ss}+\frac{1}{2}k\left ( k^{2}-\bar{k}^{2} \right )\right ]F\,ds };$
 \item $\displaystyle{\frac{\mathrm{d} }{\mathrm{d} t} \int_{\alpha } k_{s}^2\,  ds = 2 \int_{\alpha } \left( - k_{s^4} - k^2 k_{ss} + \frac{1}{2} k\, k_s^2 \right) F \, ds;}$
 \end{enumerate}
\end{lem}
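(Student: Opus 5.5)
We derive all seven identities directly from the flow \eqref{f}, using the Frenet relations $\tau_s = -k\nu$ and $\nu_s = k\tau$, which follow from $k=-\left<\alpha_{ss},\nu\right>$ and the orientation of $\nu$. The boundary terms that appear on integrating by parts are always evaluated with the generalised Neumann conditions \eqref{E:NBC1}, \eqref{E:NBC2}. The order is: first the metric evolution (i), then the commutator $[\partial_t,\partial_s]$ and the evolution of $\tau$, $\nu$ and $k$, and finally the integral identities.

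\textbf{Step 1 (items (i), (ii)).} With $|\alpha_u|$ the speed,
\[
\partial_t|\alpha_u| = \left<\tau, \partial_u(-F\nu)\right> = -F\left<\tau,\nu_u\right> = -F k |\alpha_u|,
\]
since the term in $F_u$ is normal to $\tau$. This gives (i). It also gives the commutator $\partial_t\partial_s = \partial_s\partial_t + Fk\,\partial_s$. For (ii), integrate (i) over $[-1,1]$.

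We must also justify that the endpoints stay on the rays. This holds because the flow \eqref{f} is understood with the tangential motion needed to keep the endpoints on $\bar\beta_i$. Tangential terms do not change geometric integrals of quantities evaluated along the curve, but they do contribute boundary terms in (iii). I expect those boundary terms to be the main subtlety.

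\textbf{Step 2 (item (iv)).} Compute
\[
\partial_t\tau = \partial_s(-F\nu) + Fk\tau = -F_s\nu .
\]
Hence $\partial_t\nu = F_s\tau$. Differentiating $k = -\left<\tau_s,\nu\right>$ and using the commutator gives
\[
\partial_t k = F_{ss} + k^2F .
\]
This is the case $\ell=0$. Induct on $\ell$: applying the commutator,
\[
\partial_t k_{s^{\ell+1}} = \partial_s\left(\partial_t k_{s^\ell}\right) + Fk\,k_{s^{\ell+1}} .
\]
Substituting the inductive formula, the new term $(k\,k_{s^{\ell+1}}F)$ is exactly the $j=0$ summand for $\ell+1$, and the others shift index. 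This is a routine reindexing.

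\textbf{Step 3 (item (iii)).} Differentiate \eqref{E:area} using (i) and the evolutions of $\alpha$ and $\nu$:
\[
\frac{d}{dt}A = \tfrac12\int\left(-F + \left<\alpha,\tau\right>F_s - \left<\alpha,\nu\right>Fk\right)ds .
\]
Integrate the middle term by parts, using $\partial_s\left<\alpha,\tau\right> = 1 - k\left<\alpha,\nu\right>$. The interior terms then combine to $-\int F\,ds$, leaving the boundary term $\tfrac12\left[\left<\alpha,\tau\right>F\right]_{-1}^{1}$.

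The Neumann condition \eqref{E:NBC1} says $\tau$ is normal to the ray at each endpoint, while $\alpha(\pm1)$ lies on the ray through the origin. Thus $\left<\alpha,\tau\right>=0$ at the endpoints and the boundary term vanishes. This is the one place where the cone geometry (tip at the origin) is essential. Any tangential reparametrisation contributes a boundary term of the same form, which vanishes for the same reason.

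\textbf{Step 4 (items (v)--(vii)).} For (v), use (i) and (iv) with $\ell=0$:
\[
\frac{d}{dt}\int k^2\,ds = \int\left(2kF_{ss} + 2k^3F - k^3F\right)ds .
\]
Integrate $2kF_{ss}$ by parts twice. The boundary terms are $[kF_s - k_sF]$. The term $k_sF$ vanishes by \eqref{E:NBC2}. For $kF_s$, with the speeds in question $F_s$ involves $k_{s^3}$ and $k_s$. Here one uses the consequence of \eqref{E:NBC1}, \eqref{E:NBC2} under the flow: since the endpoint stays on a fixed line with $\tau$ normal to it, $\partial_t\tau$ is normal to $\tau$ at the endpoint, which forces $F_s(\pm1)=0$. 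This gives (v).

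For (vi), expand $(k-\bar k)^2$. Using $\int k\,ds = 2\pi\omega$, which is constant, together with (ii), the $\bar k$ terms give $-\bar k^2\frac{d}{dt}L = \bar k^2\int kF\,ds$. This yields the stated expression.

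For (vii), use (iv) with $\ell=1$:
\[
\partial_t k_s = F_{s^3} + k^2F_s + 3kk_sF .
\]
Compute $\int\left(2k_s\partial_t k_s - k_s^2kF\right)ds$ and integrate by parts until no derivative falls on $F$. The boundary terms involve $k_s$ (which is zero), $F_s$ (which is zero) and $k_{s^3}F$. For the last, we need $k_{s^3}(\pm1)=0$, obtained by differentiating \eqref{E:NBC2} in time along the boundary. Verifying these higher-order boundary identities is the step I expect to be the main obstacle.
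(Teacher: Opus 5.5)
There are two concrete errors, both in the integral identities.

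First, in (vi) the sign of the $\bar k$ contribution is wrong. You have $\int_\alpha(k-\bar k)^2ds=\int_\alpha k^2ds-(2\pi\omega)^2/L$, and $\bar k=2\pi\omega/L$ depends on $t$. So the last term differentiates to $+\bar k^2\frac{dL}{dt}=-\bar k^2\int_\alpha kF\,ds$. You wrote $-\bar k^2\frac{dL}{dt}=+\bar k^2\int_\alpha kF\,ds$, as if $\bar k$ were constant. That would give $\frac12k(k^2+\bar k^2)$ rather than the stated $\frac12k(k^2-\bar k^2)$.

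Second, in (vii) you correctly isolate the surviving boundary term $[2k_{s^3}F]_{-1}^{1}$. Note that $F(\pm1,t)$ need not vanish, since the endpoints slide along the rays with velocity $-F\nu$. But your justification of $k_{s^3}(\pm1)=0$ fails. Differentiating \eqref{E:NBC2} in time and using your own formula $\partial_t k_s=F_{s^3}+k^2F_s+3kk_sF$ gives only $F_{s^3}(\pm1)=0$. For the speeds at hand that is a statement about $k_{s^5}$, not $k_{s^3}$. The missing observation is that $k_{s^3}(\pm1)=0$ comes from $F_s(\pm1)=0$, which you already derived from the Neumann condition. For each of the speeds (1)--(3) one has $F_s=-k_{s^3}+c(t)\,k_s$, with $c=\lambda,\,0,\,\lambda_2$ respectively. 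So $F_s=k_s=0$ at the endpoints forces $k_{s^3}=0$ there; this is the case $\ell=1$ of Lemma \ref{T:BCs}. Thus (vii), unlike (i)--(vi), uses the structure of the speed and not just \eqref{E:NBC1}--\eqref{E:NBC2}.

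Apart from this, your route is the standard derivation the paper defers to \cite{Wheeler2013}: metric evolution, the commutator, $\partial_t\tau=-F_s\nu$, $\partial_tk=F_{ss}+k^2F$, induction, then integration by parts. Items (i)--(v) are right, but two remarks should be fixed.

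\begin{itemize}
\item The reason $F_s(\pm1)=0$ is not that $\partial_t\tau\perp\tau$; that holds everywhere and forces nothing. The reason is that $\tau(\pm1,t)$ is a fixed unit vector perpendicular to a fixed ray, so $0=\partial_t\tau=-F_s\nu$ there. Equivalently, differentiate \eqref{E:NBC1} in time using $\partial_t\nu=F_s\tau$.
\item No tangential motion is needed. By \eqref{E:NBC1}, $\nu(\pm1,t)$ is parallel to the ray, so $-F\nu$ already keeps the endpoints on $\bar\beta_i$. Your claim that a tangential term would contribute to (iii) a boundary term ``of the same form'' is also false. A term $\phi\tau$ contributes $\frac12[\phi\langle\alpha,\nu\rangle]_{-1}^{1}$, and $\langle\alpha,\nu\rangle\neq0$ at the endpoints. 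It is harmless only because compatibility with the rays forces $\phi(\pm1)=0$.
\end{itemize}
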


While a smooth solution to the flow \eqref{f} exists, the rotation number $\omega$ is constant.  We state this fact as a Corollary.  The proof is a straightforward calculation, as in \cite{GM24} for example.

\begin{cor} \label{T:omega}
 Under the flow \eqref{f}, with speed (1), (2) or (3) and generalised Neumann boundary conditions \eqref{E:NBC1} and \eqref{E:NBC2} we have
 \begin{equation}
     \frac{\mathrm{d} }{\mathrm{d} t}\int _{\alpha }k\, ds=0.
 \end{equation}
% on the boundary.
\end{cor}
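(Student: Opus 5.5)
Differentiate $\int_\alpha k\,ds$ in time directly, using Lemma \ref{eq of general F}. By part (iv) with $\ell=0$ we have $\partial_t k = F_{ss} + k^2 F$, and by part (i) we have $\partial_t\, ds = -F k\, ds$. Hence
\begin{equation*}
\frac{\mathrm{d}}{\mathrm{d}t}\int_\alpha k\, ds = \int_\alpha \left( F_{ss} + k^2F - k^2 F\right) ds = \int_\alpha F_{ss}\, ds = F_s(1,t) - F_s(-1,t).
\end{equation*}
One technical point: the endpoints of the arclength interval $[0,L(t)]$ move with $t$. To avoid this, I would do the computation in the fixed parameter $u\in[-1,1]$, writing $ds = |\alpha_u|\,du$. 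Then no boundary terms arise from differentiating the domain, and the only boundary contribution is the one from integrating $F_{ss}$.

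So everything reduces to showing $F_s(\pm1,t)=0$ for each of the three speeds. For speed (1), $F_s = -k_{s^3} + \lambda k_s$. For speed (2), $F_s = -k_{s^3}$, since $\lambda_1(t)$ is constant in space. For speed (3), $F_s = -k_{s^3} + \lambda_2(t) k_s$. By \eqref{E:NBC2} the $k_s$ terms vanish at the endpoints, so in every case it remains to show $k_{s^3}(\pm1,t)=0$.

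This is the main step, and the approach is to differentiate the boundary conditions in time. The ray $\bar\beta_i$ is a straight line, so the Neumann condition \eqref{E:NBC1} says $\nu(\pm1,t)$ is parallel to the fixed ray and is therefore constant in $t$. The standard formula for normal flows, $\partial_t \nu = F_s \tau$ up to a sign fixed by the conventions in \eqref{f}, then gives $F_s(\pm1,t)=0$ immediately. This bypasses the need to analyse $k_{s^3}$ separately, and as a consistency check it agrees with $k_{s^3}(\pm1,t)=0$ given that $k_s(\pm1,t)=0$.

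The endpoint moves tangentially along the boundary line, so it is worth checking that this does not spoil the argument. The velocity $\partial_t\alpha = -F\nu$ at the endpoint must lie along $\bar\beta_i$, and indeed $\nu$ is tangent to the ray there by \eqref{E:NBC1}, so the constraint is consistent. Moreover, $\nu(\pm1,t)$ is evaluated at the fixed parameter values $u=\pm1$, so its constancy in $t$ holds without any extra tangential reparametrisation. Combining these steps gives $\frac{\mathrm{d}}{\mathrm{d}t}\int_\alpha k\,ds=0$. Alternatively, one can argue geometrically: $\int_\alpha k\,ds$ equals the total turning angle of $\tau$ between the endpoints, which \eqref{E:NBC1} fixes as $\Theta_1-\Theta_2$, consistent with the value of $\omega$ given earlier.
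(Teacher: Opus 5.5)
Your proposal is correct. It is essentially the "straightforward calculation" the paper has in mind: the derivative reduces to the boundary term $F_s(1,t)-F_s(-1,t)$, and differentiating the Neumann condition \eqref{E:NBC1} in time at $u=\pm1$ (using $\partial_t\nu=F_s\tau$) gives $F_s(\pm1,t)=0$, the same fact that underlies $k_{s^3}(\pm1,t)=0$ in Lemma \ref{T:BCs}. Your geometric alternative also works, but the boundary data fix the turning angle only up to an integer multiple of $2\pi$, so you still need continuity in $t$ to conclude it is constant.
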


Similarly as in \cite{WW}, all odd curvature derivatives are equal to zero on the boundary, as long as the solution to \eqref{f} exists for each of the speed functions of interest.  As noted earlier, we use this fact when we `integrate by parts', that is, apply the Divergence Theorem to various functions involving curvature and its derivatives on our curve with boundary.

\begin{lem}\label{T:BCs}
Under the flows \eqref{f}, with the speeds (1), (2) or (3) and generalised Neumann boundary conditions \eqref{E:NBC1} and \eqref{E:NBC2}, we have for each $\ell \in \mathbb{N}$ 
  $$k_{s^{2\ell+1}}\left( \pm 1, t\right)=0 \mbox{.}$$
\end{lem}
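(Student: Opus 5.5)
We argue by induction on $\ell$, following the approach of \cite{WW}. The base case $\ell=0$ is the no-curvature-flux condition \eqref{E:NBC2}. Throughout we use two facts.

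First, the endpoints $\alpha(\pm1,t)$ stay on the fixed straight rays $\bar\beta_1$, $\bar\beta_2$. By \eqref{E:NBC1} the normal $\nu$ at an endpoint is parallel to the ray, so the velocity $-F\nu$ there is tangent to the ray. This is consistent, and it places no condition on $F$ itself. The information must therefore come from preserving \eqref{E:NBC1} in time.

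Second, we differentiate \eqref{E:NBC1} in time. The direction of a straight ray is fixed, so $\frac{\partial}{\partial t}\nu$ must be orthogonal to $\nu$ and also to $e_\pm$. But $\langle \nu, e_\pm\rangle=0$ means $e_\pm=\pm\tau$, so $\frac{\partial}{\partial t}\nu$ is orthogonal to $\tau$ as well. Since $\frac{\partial}{\partial t}\nu = (\pm)F_s\,\tau$ for normal flows, this gives $F_s(\pm1,t)=0$.

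For all three speeds, $F_s = -k_{s^3} + c(t)\,k_s$ with $c(t)\in\{\lambda,0,\lambda_2(t)\}$. Using $k_s(\pm1)=0$ we obtain $k_{s^3}(\pm1,t)=0$, which is the case $\ell=1$.

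For the inductive step, assume $k_{s^{2j+1}}(\pm1,t)=0$ for all $j\le \ell$ and all $t$. We differentiate the identity $k_{s^{2\ell-1}}(\pm1,t)=0$ in time along the boundary. The endpoint moves, but only tangentially along the ray. In arclength terms the boundary point corresponds to $s=0$ or $s=L$. So the total time derivative is $\partial_t k_{s^{2\ell-1}}$, evaluated in the fixed parameter $u=\pm1$, plus possibly a tangential term. Since $\alpha_t=-F\nu$ is purely normal, the parametrisation is normal and no tangential correction appears. Hence Lemma \ref{eq of general F}(iv) with index $2\ell-1$ gives
\[
0=F_{s^{2\ell+1}}+\sum_{j=0}^{2\ell-1}\left(k\,k_{s^{2\ell-1-j}}F\right)_{s^j}\quad\text{at } u=\pm1 .
\]
Expanding, $F_{s^{2\ell+1}}=-k_{s^{2\ell+3}}+c(t)k_{s^{2\ell+1}}$ (for speed (2) the $\lambda_1$ term drops after one derivative). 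The sum is a linear combination of products $k_{s^{a}}k_{s^{b}}k_{s^{c}}$ with $a+b+c=2\ell-1$ odd, together with terms like $\lambda_1 k_{s^a}k_{s^b}$ with $a+b$ odd. In each such product at least one index is odd, and every odd index is at most $2\ell+1$. By the inductive hypothesis every such product vanishes at the boundary.

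This requires $F$ to be expressed through $k$. For speed (1) every product has an odd total order, so some factor has odd order. For speed (2) the $\lambda_1$ term contributes $\lambda_1(k\,k_{s^{m}})_{s^j}$ with odd total order, so the same argument applies. Therefore $k_{s^{2\ell+3}}(\pm1,t)=0$, completing the induction. Since the case $\ell=1$ also follows from this scheme with the lemma applied at index $-1$, the cleaner route is to use the base cases $\ell=0$ and $\ell=1$ established above and apply the inductive step for $\ell\ge1$.

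The main obstacle is justifying the commutation of $\partial_t$ with evaluation at the moving endpoint, together with the precise form of $\partial_t\nu$ in the base step. Both are handled by noting that the parametrisation in $u$ is fixed and the flow is purely normal. We also need enough regularity up to the boundary, which holds for the smooth solution.
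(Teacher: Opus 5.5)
Your argument is correct and is the standard one the paper relies on by deferring to \cite{WW}: differentiating the Neumann condition \eqref{E:NBC1} along the fixed rays gives $F_s(\pm1,t)=0$, hence $k_{s^3}(\pm1,t)=0$, and then differentiating $k_{s^{2\ell-1}}(\pm1,t)=0$ in time using Lemma \ref{eq of general F}(iv) and a parity count yields $k_{s^{2\ell+3}}(\pm1,t)=0$. One small slip: the cubic terms coming from the $-k_{ss}$ part of $F$ have total order $2\ell+1$, not $2\ell-1$. This is still odd, and each factor still has order at most $2\ell+1$, so the conclusion is unaffected.
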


As used in \cite{MWY}, the following inequality is needed for estimating terms with $\lambda_1(t)$ and $\lambda_2(t)$.  The proof is by induction and uses the H\"{o}lder inequality and integration by parts (ie the Divergence Theorem).  Whereas the setting in \cite{MWY} was closed curves, the boundary conditions \eqref{E:NBC2} (see also Lemma \ref{T:BCs}) ensure the boundary terms arising from integration by parts in the proof of the following in this case are equal to zero, so the proof is essentially the same.

\begin{lem}\label{k8}For each $n \in \mathbb{N}$
$$\int_{\alpha } k^2_{ s^{n-1}} \, ds \leq \left( \int_{\alpha } k^2 \, ds \right)^{\frac{1}{n}} \left( \int_{\alpha } k^2_{s^n} \, ds \right)^{\frac{n-1}{n}}.$$   
\end{lem}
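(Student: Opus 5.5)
We argue by induction on $n$, using integration by parts together with the Cauchy--Schwarz (H\"older) inequality. Write $a_j := \int_\alpha k_{s^j}^2\, ds$ for $j \geq 0$. The claim is $a_{n-1} \leq a_0^{1/n} a_n^{(n-1)/n}$. For $n=1$ this reads $a_0 \leq a_0$, which is trivial.

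The key step is the one-step estimate $a_{j}^2 \leq a_{j-1}\, a_{j+1}$ for each $j \geq 1$. We obtain it by integrating by parts once:
$$a_j = \int_\alpha k_{s^j}\, k_{s^j}\, ds = \Big[ k_{s^{j-1}} k_{s^j} \Big]_{\partial \alpha} - \int_\alpha k_{s^{j-1}}\, k_{s^{j+1}}\, ds .$$
The boundary term vanishes because one of $j-1$ and $j$ is odd, so by Lemma \ref{T:BCs} (and \eqref{E:NBC2} when $j=1$, where $k_s(\pm1,t)=0$) the corresponding odd-order derivative is zero at $u=\pm 1$. This is the only place the boundary enters, and it is what allows the closed-curve argument of \cite{MWY} to carry over. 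Applying Cauchy--Schwarz to the remaining integral gives $a_j \leq a_{j-1}^{1/2} a_{j+1}^{1/2}$, that is, $a_j^2 \leq a_{j-1} a_{j+1}$. Thus the sequence $(a_j)$ is log-convex.

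To pass from log-convexity to the claimed inequality, assume inductively that $a_{n-1} \leq a_0^{1/n} a_n^{(n-1)/n}$. The one-step estimate gives $a_n^2 \leq a_{n-1} a_{n+1}$, and substituting the hypothesis yields
$$a_n^2 \leq a_0^{1/n} a_n^{(n-1)/n} a_{n+1}.$$
If $a_n = 0$, the desired bound $a_n \leq a_0^{1/(n+1)} a_{n+1}^{n/(n+1)}$ is trivial. Otherwise divide by $a_n^{(n-1)/n}$ to get $a_n^{(n+1)/n} \leq a_0^{1/n} a_{n+1}$, and raise both sides to the power $n/(n+1)$. This gives $a_n \leq a_0^{1/(n+1)} a_{n+1}^{n/(n+1)}$, which is the statement for $n+1$ and closes the induction.

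The only point needing care is the vanishing of the boundary terms. This requires smoothness of $k$ up to the boundary, with one-sided derivatives, and the fact that Lemma \ref{T:BCs} covers every odd order $2\ell+1$ with $\ell \geq 1$, while \eqref{E:NBC2} covers order $1$. Since the parity argument handles every $j\geq1$, there is no case in which both factors in $k_{s^{j-1}}k_{s^j}$ fail to vanish. The remaining steps are routine algebra.
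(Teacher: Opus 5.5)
Your proposal is correct and is essentially the paper's own proof. It uses the same induction and the same single integration by parts, with the boundary term $k_{s^{j-1}}k_{s^j}$ killed by parity via Lemma \ref{T:BCs} and \eqref{E:NBC2}, followed by Cauchy--Schwarz, substitution of the inductive hypothesis and rearrangement; your explicit handling of the case $a_n=0$ before dividing, and of order one via \eqref{E:NBC2}, are details the paper leaves implicit.
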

\begin{proof}
We prove this by induction.
$$\int_{\alpha } k_{s^{n-1}}^2 \, ds \leq \left( \int_{\alpha } k^2 \, ds \right)^{\frac{1}{n}} \left( \int_{\alpha } k_{s^n}^2 \, ds \right)^{\frac{n-1}{n}}.$$ The statement is trivial for $n = 1$. Therefore, assume that
\begin{equation}\label{lambda12}
\int_{\alpha } k_{s^{i-1}}^2 \, ds \leq \left( \int_{\alpha } k^2 \, ds \right)^{\frac{1}{i}} \left( \int_{\alpha } k_{s^i}^2 \, ds \right)^{\frac{i-1}{i}}
\end{equation}
and we use this to prove that 
$$\int_{\alpha } k_{s^i}^2 \, ds \leq \left( \int_{\alpha } k^2 \, ds \right)^{\frac{1}{i+1}} \left( \int_{\alpha } k_{s^{i+1}}^2 \, ds \right)^{\frac{i}{i+1}}.$$
By integration by parts, Lemma \ref{T:BCs}  and the H\"{o}lder inequality, we obtain
$$\int_{\alpha} k_{s^i}^2 \, ds =  k_{s^{i}} k_{s^{i-1}}\bigg{|}_{\partial \alpha}  - \int_{\alpha} k_{s^{i-1}} k_{s^{i+1}} \, ds 
\leq \left( \int_{\alpha} k_{s^{i-1}}^2 \, ds \right)^{\frac{1}{2}}
\left( \int_{\alpha} k_{s^{i+1}}^2 \, ds \right)^{\frac{1}{2}},$$ where the boundary term above contains an odd derivative, i.e., if $i$ is odd, then $k_{s^i}=0$ on the boundary, while if $i$ is even, then $i-1$ is odd, then $k_{s^{i-1}}=0$ on the boundary.
By inserting on the right-hand side of \eqref{lambda12}, we have   
$$\int_{\alpha} k_{s^i}^2 \, ds 
\leq \left( \int_{\alpha} k^2 \, ds \right)^{\frac{1}{2i}} 
\left( \int_{\alpha} k_{s^i}^2 \, ds \right)^{\frac{i-1}{2i}} 
\left( \int_{\alpha} k_{s^{i+1}}^2 \, ds \right)^{\frac{1}{2}} ;$$
which implies that $$\left( \int_{\alpha} k_{s^i}^2 \, ds \right)^{\frac{i+1}{2i}} 
\leq \left( \int_{\alpha} k^2 \, ds \right)^{\frac{1}{2i}} 
 \left( \int_{\alpha} k_{s^{i+1}}^2 \, ds \right)^{\frac{1}{2}},$$
 thus, $$\int_{\alpha} k_{s^i}^2 \, ds 
\leq \left( \int_{\alpha} k^2 \, ds \right)^{\frac{1}{i+1}} 
\left( \int_{\alpha} k_{s^{i+1}}^2 \, ds \right)^{\frac{i}{i+1}}$$
which completes the proof.    
\end{proof}
%\textcolor{red}{Suggest including a sketch of the proof, that shows how the zero boundary for odd curvature derivatives comes in, and what is done in the case where even derivatives come up}

Short time existence of solutions in this setting is well-known.  We refer the reader to \cite{Wu} for example, where a general sketch for higher-order parabolic flows of curves with boundary conditions is provided. In cases (2) and (3), the global constraint term may be handled by a fixed point argument.  By continuity of solutions, neither end can immediately jump to the cone tip.

As is often the case for parabolic geometric flows, the finiteness of the maximal existence time $T$ of solutions under our flows is characterised by blow up of $\int_{\alpha} k^2 ds$.  We have the same result here, under the additional assumption that neither end of the evolving curve reaches the cone tip.

\begin{theorem}\label{blowup}
 Let $\alpha :\left [ -1, 1 \right ]\times \left [ 0,  T\right )\to \mathbb{R}^{2}$ be a maximal solution of \eqref{f} with speed (1), (2) or (3)  and generalised Neumann boundary conditions \eqref{E:NBC1}  and \eqref{E:NBC2}. If $T<\infty$, neither end of $\alpha$ has reached the cone tip and $L\left[ \alpha_t\right] \geq \underline{L}>0$  then there is a constant $c>0$ such that
 \begin{equation}
\int _{\alpha }k^{2}\,ds\geq c\left ( T-t \right )^{-\frac{1}{4}}     \mbox{.}
 \end{equation} 
\end{theorem}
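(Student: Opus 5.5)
The plan is to follow the standard argument of Dziuk--Kuwert--Sch\"atzle \cite{DKS02}, as adapted to curves with boundary in \cite{Wheeler2013, GM24}. The proof has two parts. First, a differential inequality for $\int_\alpha k^2\,ds$ yields the lower blow-up rate. Second, we show that if $\int_\alpha k^2\,ds$ stayed bounded, all higher derivatives would stay bounded, so the solution could be extended past $T$; this contradicts maximality. The hypotheses that neither end reaches the cone tip and that $L\geq\underline{L}$ are what make the second part work. They keep the boundary well defined, so short-time existence can be restarted, and they give scale control. In particular $L$ is bounded below, and for the constrained flows $L$ is constant.

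\textbf{Step 1: evolution of $\int k^2$.} Using Lemma \ref{eq of general F}(v) with each speed and integrating by parts, the boundary terms vanish by Lemma \ref{T:BCs}. This gives
\begin{equation*}
\frac{d}{dt}\int_\alpha k^2\,ds = -2\int_\alpha k_{ss}^2\,ds + \int_\alpha P^2_4(k)\,ds + \text{(lower-order terms)}.
\end{equation*}
For speed (1) the extra terms are $-2\lambda\int k_s^2 + \lambda\int k^4$. For speeds (2) and (3) they are $\lambda_1\int k^3$, respectively $-2\lambda_2\int k_s^2+\lambda_2\int k^4$. The multipliers come from $dL/dt=0$:
\begin{equation*}
\lambda_1=-\frac{\int k_s^2}{\int k}, \qquad \lambda_2=\frac{\int k_s^2}{\int k^2}.
\end{equation*}
Here $\int k=2\pi\omega$ is a nonzero constant by Corollary \ref{T:omega}. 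We estimate $\int k_s^2$ by Lemma \ref{k8} with $n=2$.

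\textbf{Step 2: interpolation to an ODE.} We apply Proposition \ref{p} with $\ell=2$ to each term. For example $\int P^2_4$ has $m+n/2=4<5$, and $\int k^4$ and $\int k^3$ are similar. For the products with $\lambda_i$, we combine Lemma \ref{k8} with Gagliardo--Nirenberg-type bounds so that every term is controlled by $\varepsilon\int k_{ss}^2$ plus a power of $\int k^2$. We absorb the $\varepsilon$ terms into $-2\int k_{ss}^2$. The lower-order powers of $\int k^2$ are handled using $\int k^2\geq (\int k)^2/L$: this is bounded below once $L$ is bounded above, or we can simply use the $L^{-1}$ factors together with $L\geq \underline L$. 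The aim is
\begin{equation*}
\frac{d}{dt}\int k^2 \leq c\left(\int k^2\right)^{5}.
\end{equation*}
Integrating from $t$ to $s\to T$, with $\int k^2\to\infty$ as $s\to T$ (from Step 3), gives $\int k^2\geq c(T-t)^{-1/4}$.

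\textbf{Step 3: blow-up is necessary.} Suppose $\int k^2$ stays bounded on $[0,T)$. Then by the analogous computation for $\frac{d}{dt}\int k_{s^\ell}^2$ (the top term is $-2\int k_{s^{\ell+2}}^2$, with the rest absorbed via Proposition \ref{p} and Lemma \ref{k8}), each $\int k_{s^\ell}^2$ grows at most linearly, hence stays bounded on the finite interval. Together with $L\geq\underline L$ and Proposition \ref{eq266}, this gives uniform $C^\ell$ bounds. The speed is then bounded, so the curve converges smoothly as $t\to T$ to a smooth limit curve satisfying the boundary conditions. Because the ends stay off the tip, short-time existence restarts at $T$, a contradiction.

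The main obstacle is the nonlocal terms $\lambda_1,\lambda_2$. One must check that they too scale like $(\int k^2)$-powers with the right homogeneity. For $\lambda_2$, the denominator $\int k^2$ needs a lower bound, which follows from Cauchy--Schwarz, $\int k^2\geq 4\pi^2\omega^2/L$, provided $L$ is bounded above. For the constrained flows $L$ is constant, so this holds. For flow (1), $L$ is decreasing since $dL/dt=-\int k_s^2-\lambda\int k^2\leq 0$, so $L$ is bounded above there as well.
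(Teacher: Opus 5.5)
The paper gives no proof of Theorem \ref{blowup}; it is stated as the standard Dziuk--Kuwert--Sch\"atzle-type result. Your outline is that standard argument, and for speeds (1) and (2) it goes through. For speed (3), however, there is a genuine gap, hidden by a sign error.

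With $\partial_t\alpha=-F\nu$ one has
\[
\frac{d}{dt}L=-\int kF\,ds=-\int k_s^2\,ds-\lambda_2\int k^2\,ds,
\]
so $\lambda_2=-\int k_s^2\,ds/\int k^2\,ds\le 0$, as in \eqref{h18}. You kept the minus sign for $\lambda_1$ but dropped it for $\lambda_2$. Consequently
\[
-2\lambda_2\int k_s^2\,ds=\frac{2(\int k_s^2\,ds)^2}{\int k^2\,ds}\ge 0 .
\]
This is the backward-heat effect of the length constraint, and the term is critical: it has exactly the order and scaling of $\int k_{ss}^2\,ds$. So no interpolation can bound it by $\varepsilon\int k_{ss}^2\,ds+C(\int k^2\,ds)^q$ with $\varepsilon<1$. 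Lemma \ref{k8} with $n=2$ bounds it by precisely $2\int k_{ss}^2\,ds$. That consumes the whole good term and leaves nothing to absorb $3\int k^2k_s^2\,ds$.

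This is sharp. For $k=\bar k+a\cos(n\pi s/L)$ one computes $(\int k_s^2)^2/(\int k^2\int k_{ss}^2)=a^2/(a^2+2\bar k^2)$, which is arbitrarily close to $1$ for large $a$. Meanwhile $\int k^2$ is independent of $n$, and $\int k_{ss}^2\to\infty$ as $n\to\infty$. So your ``main obstacle'' is not the denominator of $\lambda_2$; a lower bound on $\int k^2$ only makes the coefficient worse.

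Here is what the argument does give. Since $(\int k_s^2)^2=(\int(k-\bar k)k_{ss})^2\le\int(k-\bar k)^2\int k_{ss}^2$, one has
\[
-2\int k_{ss}^2+\frac{2(\int k_s^2)^2}{\int k^2}\le-\frac{8\pi^2\omega^2}{L_0\int k^2}\int k_{ss}^2 .
\]
This margin degenerates as $\int k^2\to\infty$. Drop $\lambda_2\int k^4\le 0$ and absorb $3\int k^2k_s^2$ via Proposition \ref{p} with $p=3/2$; the cost is $\varepsilon^{-3}$ with $\varepsilon\sim(L_0\int k^2)^{-1}$. This yields only $\frac{d}{dt}\int k^2\le C(\int k^2)^8$, hence $\int k^2\ge c(T-t)^{-1/7}$. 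That proves blow-up, but not the exponent $-\frac14$. Reaching $-\frac14$ would need a further idea, for instance using the favourable term $\lambda_2\int k^4$, which cancels $3\int k^2k_s^2$ to leading order on single-mode profiles.

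The same critical term $-2\lambda_2\int k_{s^{\ell+1}}^2$ appears in your Step 3. The $\varepsilon$-bound for it claimed in the paper's proof of Proposition \ref{bd} for speed (3) is ruled out by the same example, so you cannot simply quote that proof. In Step 3 the term can be repaired, however. Integration by parts gives $(\int k_{s^j}^2)^2\le\int k_{s^{j-1}}^2\int k_{s^{j+1}}^2$, with $k-\bar k$ in place of $k$ when $j-1=0$. Hence the ratios $\int k_{s^{j+1}}^2/\int k_{s^j}^2$ increase in $j$, and
\[
|\lambda_2|\int k_{s^{\ell+1}}^2\le\Bigl(1-\frac{4\pi^2\omega^2}{L_0\int k^2}\Bigr)\int k_{s^{\ell+2}}^2 .
\]
This is a uniform margin once $\int k^2\le C_0$, and you should state it explicitly rather than appeal to Proposition \ref{p}.
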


Notice that from the H\"{o}lder inequality
\begin{equation} \label{E:Holder}
    \int_{\alpha} k^2 ds \geq \frac{4\pi^2 \omega^2}{L} \mbox{,}
\end{equation}
so $L \searrow 0$ is one way case of blow up of $\int k^2 ds$.  In this case, in view of the boundary conditions, the solution shrinks to the cone tip at time $T$.  

On the other hand, while $\int_{\alpha} k^2 ds$ remains bounded and $L\left[ \alpha_t\right] \geq \underline{L} >0$, one has control of all curvature derivatives in $L^2$.  The following result is standard and proved in the case of curves with boundary similarly as in \cite{DP14}.  We will highlight the key differences in the proof of Proposition \ref{bd} for each of our flows in the subsequent sections.  We have added in the specific requirement that $L \geq \underline{L}>0$ on $\left[ 0, T\right)$, which is important in the length penalised case.

\begin{proposition}\label{bd}
Let $\alpha: \left[ -1, 1\right]\times  \left[ 0, T \right )\to \mathbb{R}^{2}$ is a solution to \eqref{f} with speed (1), (2) or (3), compatible with the generalised Neumann boundary conditions \eqref{E:NBC1} and \eqref{E:NBC2} such that $L\left[ \alpha_t\right] \geq \underline{L}>0$. If  $\int_{\alpha } k^2 ds$  is bounded by a constant $C_{0}>0$\,, for all $t\in \left[ 0, T\right)$, %then $T=\infty$ 
then for each $\ell \in \mathbb{N}\cup \left\{0 \right\},$ there exists a constant $c_{\ell}> 0 $ such that %$$\int _{\alpha }k_{s^\ell}^{2}\,ds\leq C_{\ell}^{2}$$ 
$$\int _{\alpha }k_{s^{\ell}}^{2}\,ds\bigg|_t\leq \int _{\alpha }k_{s^{\ell}}^{2}\,ds\bigg|_0+c_\ell\,C_{0}^{2\ell+5}\,t\,\mbox{.}$$   
\end{proposition}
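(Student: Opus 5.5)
We differentiate $\int_\alpha k_{s^\ell}^2\,ds$ in time, use Lemma \ref{T:BCs} to kill boundary terms, and then control everything by the good term $-2\int_\alpha k_{s^{\ell+2}}^2\,ds$ via Proposition \ref{p}, treating $\lambda$, $\lambda_1$, $\lambda_2$ as lower-order perturbations.

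\textbf{Step 1: the evolution identity.} By Lemma \ref{eq of general F}(i),(iv),
$$\frac{\mathrm{d}}{\mathrm{d}t}\int_\alpha k_{s^\ell}^2\,ds = 2\int_\alpha k_{s^\ell}\Big(F_{s^{\ell+2}}+\sum_{j=0}^{\ell}\big(k\,k_{s^{\ell-j}}F\big)_{s^j}\Big)ds-\int_\alpha k_{s^\ell}^2\,kF\,ds.$$
Insert $F=-k_{ss}+G$, where $G=\lambda k$, $\lambda_1$ or $\lambda_2 k$. The leading part is $-2\int_\alpha k_{s^\ell}k_{s^{\ell+4}}\,ds$. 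Integrate it by parts twice; every boundary term contains a factor $k_{s^{\ell+1}}$ or $k_{s^{\ell+3}}$ (odd order when $\ell$ is even), or $k_{s^\ell}$, $k_{s^{\ell+2}}$ (odd order when $\ell$ is odd). In each case the factor vanishes by Lemma \ref{T:BCs}. This gives
$$-2\int_\alpha k_{s^{\ell+2}}^2\,ds.$$
The remaining $k_{ss}$-terms are of the form $\int_\alpha P_4^{2\ell+2}(k)\,ds$. After integrating by parts so that no factor carries more than $\ell+1$ derivatives (again with vanishing boundary terms), Proposition \ref{p} applies with top order $\ell+2$, $n=4$, $m=2\ell+2$. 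Here $m+n/2=2\ell+4<2(\ell+2)+1$, so the bound is
$$\varepsilon\int_\alpha k_{s^{\ell+2}}^2\,ds + c\,\varepsilon^{-p/(2-p)}\Big(\int_\alpha k^2\,ds\Big)^{\frac{n-p}{2-p}}+c\Big(\int_\alpha k^2\,ds\Big)^{m+n-1}.$$
The boundedness $\int_\alpha k^2\,ds\le C_0$ then turns these into a constant times a power of $C_0$.

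\textbf{Step 2: the lower-order terms.}

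For speed (1), the $\lambda k$ part gives $2\lambda\int_\alpha k_{s^\ell}k_{s^{\ell+2}}\,ds=-2\lambda\int_\alpha k_{s^{\ell+1}}^2\,ds\le 0$, plus terms $\lambda P_4^{2\ell}(k)$. These are handled by Proposition \ref{p} exactly as above.

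For speeds (2) and (3), $\lambda_1$ and $\lambda_2$ are given by the length constraint of Lemma \ref{eq of general F}(ii):
$$\lambda_1=\frac{\int_\alpha k\,k_{ss}\,ds}{\int_\alpha k\,ds}=-\frac{\int_\alpha k_s^2\,ds}{2\pi\omega},\qquad \lambda_2=-\frac{\int_\alpha k_s^2\,ds}{\int_\alpha k^2\,ds}.$$
Since $\omega$ is fixed and $\int_\alpha k^2\,ds\ge 4\pi^2\omega^2/L$, the denominators are bounded below. The numerators are controlled by Lemma \ref{k8}: $\int_\alpha k_s^2\,ds\le(\int_\alpha k^2\,ds)^{1/(\ell+2)}(\int_\alpha k_{s^{\ell+2}}^2\,ds)^{(\ell+1)/(\ell+2)}$. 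The multiplied terms $\int_\alpha P^{2\ell}_{3}(k)$ or $\int_\alpha P^{2\ell}_{2}(k)$ are again bounded by Proposition \ref{p}. Combining these with H\"older/Young yields $\varepsilon\int_\alpha k_{s^{\ell+2}}^2\,ds+c\,C_0^{q}$ for suitable powers $q$. The hypothesis $L\ge\underline{L}$ enters wherever negative powers of $L$ from the scale-invariant norms must be bounded, which matters especially in case (1).

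\textbf{Step 3: conclusion.} Choosing $\varepsilon$ small to absorb all $\varepsilon$-terms into $-2\int_\alpha k_{s^{\ell+2}}^2\,ds$ gives
$$\frac{\mathrm{d}}{\mathrm{d}t}\int_\alpha k_{s^\ell}^2\,ds\le c_\ell\,C_0^{2\ell+5}.$$
Integrating in time gives the stated estimate. To arrive at the single exponent $2\ell+5$, I would check that the dominant power from the $P_4^{2\ell+2}$ term, namely $(n-p)/(2-p)$ with $p=(2\ell+3)/(\ell+2)$, equals $2\ell+5$. The other powers of $C_0$ would be absorbed either by assuming $C_0\ge1$ or by using the lower bound on $\int_\alpha k^2\,ds$. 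I expect the main obstacle to be the $\lambda_1$/$\lambda_2$ products: they are nonlocal, and it must be checked that their total homogeneity still admits absorption with a polynomial in $C_0$ of the right degree.
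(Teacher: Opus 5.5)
Your argument for speed (1) is the paper's: discard $-2\lambda\int_\alpha k_{s^{\ell+1}}^2\,ds\le 0$ and apply Proposition \ref{p} with top order $\ell+2$. The gap is in speed (3).

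Since $F=-k_{ss}+\lambda_2 k$ has the same form as speed (1), the $\lambda_2$-contributions to $\frac{\mathrm{d}}{\mathrm{d}t}\int_\alpha k_{s^\ell}^2\,ds$ are $-2\lambda_2\int_\alpha k_{s^{\ell+1}}^2\,ds+\lambda_2\int_\alpha P_4^{2\ell}(k)\,ds$. They are not a multiple of $\int_\alpha P_2^{2\ell}(k)\,ds$. Because $\lambda_2=-\int_\alpha k_s^2\,ds/\int_\alpha k^2\,ds\le 0$, the first term now has the bad sign.

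This is exactly the homogeneity check you deferred, and it fails. Write $a_j=\int_\alpha k_{s^j}^2\,ds$. The term equals $2a_1a_{\ell+1}/a_0$, which has exactly the homogeneity of $a_{\ell+2}$. The inequality $a_i^2\le a_{i-1}a_{i+1}$ from the proof of Lemma \ref{k8} gives $a_1a_{\ell+1}\le a_0a_{\ell+2}$ with constant exactly $1$. So the best generic bound cancels the good term $-2a_{\ell+2}$ completely and leaves nothing to absorb your $\varepsilon$-terms.

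No bound of the form $\varepsilon a_{\ell+2}+C(\varepsilon,C_0)$ is possible. For $k=\bar k+\eta\cos(N\pi s/L_0)$ with $N\to\infty$, $a_0$ stays bounded while $(a_1a_{\ell+1}/a_0)/a_{\ell+2}$ stays equal to a fixed positive number. The paper does isolate this term, but its one-line Young's-inequality estimate of it cannot hold as written, for the same reason.

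One way to close it: because $k_s=0$ on the boundary, the same integration by parts works with $k-\bar k$ in place of $k$. This gives $a_1a_{\ell+1}\le\left(\int_\alpha(k-\bar k)^2\,ds\right)a_{\ell+2}$. Since $a_0=\int_\alpha(k-\bar k)^2\,ds+4\pi^2\omega^2/L_0$, you get
\[
-2a_{\ell+2}-2\lambda_2\,a_{\ell+1}\le-\frac{8\pi^2\omega^2}{L_0C_0}\,a_{\ell+2}.
\]
The remaining terms, including $\lambda_2\int_\alpha P_4^{2\ell}(k)\,ds$, which is subcritical, are then absorbed with $\varepsilon$ proportional to $1/C_0$. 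Growth is still linear in $t$, though this route gives a higher power of $C_0$ than the stated $2\ell+5$.

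There is also a fixable slip in speed (2). The inequality you attribute to Lemma \ref{k8}, $a_1\le a_0^{1/(\ell+2)}a_{\ell+2}^{(\ell+1)/(\ell+2)}$, is that lemma's bound for $a_{\ell+1}$, not for $a_1$. By Proposition \ref{p}, $\int_\alpha|P_3^{2\ell}(k)|\,ds$ carries $a_{\ell+2}^{(2\ell+1/2)/(2\ell+4)}$. With your exponent, the product therefore has total power $\frac{4\ell+5/2}{2\ell+4}\ge 1$ in $a_{\ell+2}$ for $\ell\ge 1$, so Young cannot absorb it. The iterated bound $a_1\le a_0^{(\ell+1)/(\ell+2)}a_{\ell+2}^{1/(\ell+2)}$ is Lemma \ref{c2} with $n=\ell+2$, which is what the paper uses. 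It gives total power $\frac{2\ell+5/2}{2\ell+4}<1$ and closes the argument.
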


\begin{remark}
 As shown above, while $\int_{\alpha} k^2 ds$ is uniformly bounded, $\int_{\alpha} k_{s^\ell}^2\, ds$ may grow linearly in $t$.  In our circumstances this will not be an issue as exponential decay `beats' this linear growth in $t$ in our interpolation arguments.  In the length penalised case, such arguments will be done using the rescaled time variable.
\end{remark}

With the above preliminaries in place, we now discuss in the following sections the three cases of our flow.

\section{Curve diffusion flow with length penalty, \texorpdfstring{$\lambda > 0$ }{} } \label{S:lp}
In this section, we consider the flow equation
\begin{equation}\label{eq116}
\frac{\partial \alpha }{\partial t}=(  k_{ss}-\lambda \, k)~\nu \mbox{.}
\end{equation}

We remark that by using a time rescaling this flow can also be thought of as a higher order regularisation of the curve shortening flow, adding a small $\frac{1}{\lambda} k_{ss}$ term to avoid situations where the second order flow may be backward parabolic.  Such an approach is used in \cite{mikula2010simple}, for example.  More commonly, a small amount of the Euler-Lagrange operator associated to the elastic energy, $k_{ss} + \frac{1}{2} k^3$ is added \cite{HV06}.  In this section, however, we have $\lambda >0$ which, without the $k_{ss}$ term corresponds to parabolic curve shortening flow. 

Observe that a circular arc of radius $r$ centred at the cone tip evolves under \eqref{eq116} according to 
$$\displaystyle{\frac{\mathrm{d} }{\mathrm{d} t}r} = -\frac{\lambda}{r} \mbox{.}$$
In particular, an arc of radius $r_0$ at time $t=0$ evolves with
$$r(t) = \sqrt{r_0^2 - 2\lambda\, t} \mbox{.}$$
As $\lambda>0$, the arc shrinks self-similarly to the cone tip by time $T=\frac{r_0^2}{2\lambda}$.  The smaller the $\lambda$, the slower the circle shrinks and the longer the time taken to shrink to the cone tip.  In the limit $\lambda \searrow 0$ we have the curve diffusion flow, for which circles are stationary.  (If $\lambda <0$ then the solution for the evolving arc exists for all time and the arc expands indefinitely.  As mentioned before, such a situation may be worth considering in future work, where the $k_{ss}$ term in the speed serves as a regularisation.)

It is reasonable to assume that an initial curve geometrically close to a circular arc will evolve under \eqref{eq116} in a similar way as the self-similar shrinking circular arc.  In this section we show this is indeed the case, provided the initial curve has small enough oscillation of curvature.  Since $\lambda > 0$, the solution to \eqref{eq116} exists for a finite time and, provided neither end reaches the cone tip while the length remains nonzero, then the solution converges to the cone tip.  Under rescaling, the solution approaches a self-similar shrinking arc asymptotically. In the case $\lambda <0$, the solution exists for all time and  provided neither end reaches the cone tip while the length remains nonzero, then the solution converges to an expanding circular arc after rescaling.  %\textcolor{red}{clarify} \textcolor{green}{Done}

Under the flow \eqref{eq116} we have the following evolution of various geometric quantities.  These equations are easy to derive using similar calculations as in \cite{WW}, for example.

\begin{lem} \label{T:evlneqns1}
Under the flow \eqref{eq116} we have the following evolution equations for various pointwise geometric quantities.
\begin{itemize}
  \item[\textnormal{(i)}] $\displaystyle{\frac{\partial}{\partial t} ds} = (  k_{ss}-\lambda~ k)\, k\, ds$;
  \item [(ii)]$\displaystyle{\frac{\partial }{\partial t}k}=-k_{s^{4}}-k^{2}k_{ss}+\lambda k_{ss}+\lambda k^{3};$ 
\item [(iii)]$\displaystyle{\frac{\partial }{\partial t}}k_{s}=-k_{s^{5}}-3kk_{s}k_{ss}-k^{2}k_{s^{3}}+4\lambda k^{2}k_{s}+\lambda k_{s^{3}};$
  \item[\textnormal{(iv)}] For each $\ell= 0, 1, 2, \ldots$,
  $\displaystyle \frac{\partial}{\partial t} k_{s^\ell} = -k_{s^{\ell+4}}+\lambda k_{s^{\ell+2}} - \sum_{j=0}^{\ell} \left[ k\, k_{s^{\ell-j}} (  k_{ss}-\lambda~ k)\, \right]_{s^{j}} \mbox{.}$ 
  \end{itemize}
  \end{lem}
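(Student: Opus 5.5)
The plan is to specialise the general evolution equations of Lemma \ref{eq of general F} to the present speed. Comparing \eqref{eq116} with \eqref{f}, the flow is \eqref{f} with
$$F = -k_{ss} + \lambda\, k,$$
which is speed (1). Every item then follows by substituting this $F$ and expanding. No boundary conditions are involved, since all statements are pointwise.

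For (i), Lemma \ref{eq of general F}(i) gives $\frac{\partial}{\partial t} ds = -Fk\,ds = (k_{ss}-\lambda k)k\,ds$ directly. For (iv), I take Lemma \ref{eq of general F}(iv) with general $\ell$ and split it into two parts.
\begin{itemize}
\item The leading term is $F_{s^{\ell+2}} = -k_{s^{\ell+4}} + \lambda k_{s^{\ell+2}}$.
\item In the sum I write $F = -(k_{ss}-\lambda k)$, which produces the displayed expression $-\sum_{j=0}^{\ell}\left[k\,k_{s^{\ell-j}}(k_{ss}-\lambda k)\right]_{s^j}$.
\end{itemize}

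Item (ii) is the case $\ell=0$ of (iv):
$$\frac{\partial}{\partial t}k = F_{ss} + k^2F = -k_{s^4} + \lambda k_{ss} - k^2k_{ss} + \lambda k^3 .$$

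Item (iii) is the case $\ell=1$, and it is the only step needing any care. The sum consists of $kk_sF$ (from $j=0$) and $(k^2F)_s = 2kk_sF + k^2F_s$ (from $j=1$). So
$$\frac{\partial}{\partial t}k_s = F_{s^3} + 3kk_sF + k^2F_s .$$
Inserting $F=-k_{ss}+\lambda k$ and $F_s = -k_{s^3}+\lambda k_s$ gives $-k_{s^5}+\lambda k_{s^3} - 3kk_sk_{ss} + 3\lambda k^2k_s - k^2k_{s^3} + \lambda k^2k_s$. The two $\lambda k^2 k_s$ contributions combine into the stated coefficient $4\lambda$.

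There is no real obstacle here. The only points to watch are the sign convention linking \eqref{eq116} to $F$, and correctly collecting the lower-order $\lambda$ terms in (iii). If one wished to be self-contained rather than cite Lemma \ref{eq of general F}, one would first derive the commutator $\partial_t\partial_s = \partial_s\partial_t + kF\,\partial_s$ from (i). Combined with $\partial_t k = F_{ss}+k^2F$ (from $\partial_t\nu = F_s\tau$), induction on $\ell$ then yields (iv).
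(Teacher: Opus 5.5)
Your proposal is correct and matches the paper's intended approach. The paper gives no separate proof; it only says these follow by calculations as in \cite{WW}, which amounts to substituting $F=-k_{ss}+\lambda k$ into Lemma~\ref{eq of general F}, exactly as you do. Your expansion for (iii) also checks out, including combining the two $\lambda k^2 k_s$ terms into $4\lambda k^2 k_s$.
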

 \begin{lem}\label{T:evlneqns2}
  Under the flow \eqref{eq116}, 
   \begin{enumerate}
   \item[(i)] $\displaystyle{\frac{\mathrm{d} }{\mathrm{d} t}A} =- 2\pi \,\omega \, \lambda $
\item [(ii)] $\displaystyle{\frac{\mathrm{d} }{\mathrm{d} t}L}=-\int _{\alpha }k_{s}^{2}ds-\lambda \int _{\alpha }k^{2}ds;$

\item [(iii)]$\displaystyle{\frac{\mathrm{d} }{\mathrm{d} t}\int_{\alpha } k^{2}ds=-2\int_{\alpha }k_{ss}^2ds+3\int_{\alpha }k^{2}k_{s}^{2}ds+\lambda \int_{\alpha }k^{4}ds-2\lambda \int_{\alpha }k_{s}^{2}ds}$;

\item [(iv)] $\displaystyle{\frac{\mathrm{d} }{\mathrm{d} t}\int _{\alpha }\left ( k-\bar{k} \right )^{2}\,ds=-2\int _{\alpha }k_{ss}^{2}\,ds+3\int _{\alpha }k^{2}k_{s}^{2}\,ds-\bar{k}^{2}\int _{\alpha }k_{s}^{2}\,ds}$

\hspace*{\fill}$\displaystyle{-2\lambda\int _{\alpha }k_{s}^{2}\,ds+\lambda\int _{\alpha }k^{4}\,ds-\lambda \bar{k}^{2}\int _{\alpha }k^{2}\,ds;}$
\item [(v)]$\displaystyle{\frac{\mathrm{d} }{\mathrm{d} t}\frac{1}{2}\int _{\alpha }k_{s^{\ell}}^{2}\,ds=-\int _{\alpha }k_{s^{\ell+2}}^{2}\,ds-\lambda\int _{\alpha }k_{s^{\ell+1}}^{2}\,ds+\int _{\alpha }P_{4}^{2\ell+2}\left ( k \right )\,ds+\lambda \int _{\alpha }P_{4}^{2\ell}\left ( k \right )\,ds}.$
\end{enumerate} 
\end{lem}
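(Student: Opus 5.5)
The plan is to obtain each identity in Lemma \ref{T:evlneqns2} by substituting the speed $F = -k_{ss} + \lambda k$ of \eqref{eq116} into the general evolution equations of Lemma \ref{eq of general F}. Boundary terms from integration by parts are then removed using Lemma \ref{T:BCs}, which says all odd arc-length derivatives of $k$ vanish at $\pm 1$.

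For (i), Lemma \ref{eq of general F}(iii) gives $\frac{d}{dt}A = \int_\alpha k_{ss}\,ds - \lambda\int_\alpha k\,ds$. The first integral equals $k_s\big|_{-1}^{1} = 0$ by \eqref{E:NBC2}, and the second is $2\pi\omega$ by \eqref{E:omega}. For (ii), Lemma \ref{eq of general F}(ii) gives $\frac{d}{dt}L = \int_\alpha k\,k_{ss}\,ds - \lambda\int_\alpha k^2\,ds$. One integration by parts, with $k\,k_s = 0$ at the ends, turns the first term into $-\int_\alpha k_s^2\,ds$.

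For (iii), I start from Lemma \ref{eq of general F}(v):
\[
2\int_\alpha \bigl(k_{ss} + \tfrac12 k^3\bigr)\bigl(-k_{ss} + \lambda k\bigr)\,ds .
\]
Expanding gives four terms.
\begin{itemize}
\item $-2\int_\alpha k_{ss}^2\,ds$, which is already in final form.
\item $2\lambda\int_\alpha k\,k_{ss}\,ds = -2\lambda\int_\alpha k_s^2\,ds$ after one integration by parts.
\item $\lambda\int_\alpha k^4\,ds$, which is already in final form.
\item $-\int_\alpha k^3 k_{ss}\,ds = 3\int_\alpha k^2 k_s^2\,ds$, since the boundary term $k^3 k_s$ vanishes.
\end{itemize}
This yields (iii). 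For (iv), I write
\[
\int_\alpha (k-\bar k)^2\,ds = \int_\alpha k^2\,ds - \frac{(2\pi\omega)^2}{L}.
\]
By Corollary \ref{T:omega}, $2\pi\omega$ is constant, so the time derivative of the second term is $\bar k^2\,\frac{dL}{dt}$. Combining (iii) with (ii) gives exactly the extra terms $-\bar k^2\int_\alpha k_s^2\,ds - \lambda\bar k^2\int_\alpha k^2\,ds$.

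For (v), I would not use the general formula directly. Instead I differentiate $\frac12\int_\alpha k_{s^\ell}^2\,ds$ using Lemma \ref{T:evlneqns1}(i) and (iv):
\[
\int_\alpha k_{s^\ell}\,\partial_t k_{s^\ell}\,ds + \frac12\int_\alpha k_{s^\ell}^2\, k\,(k_{ss} - \lambda k)\,ds .
\]
The leading terms are $-\int_\alpha k_{s^\ell}k_{s^{\ell+4}}\,ds + \lambda\int_\alpha k_{s^\ell}k_{s^{\ell+2}}\,ds$. Integrating by parts twice, respectively once, gives $-\int_\alpha k_{s^{\ell+2}}^2\,ds - \lambda\int_\alpha k_{s^{\ell+1}}^2\,ds$. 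The boundary terms are products $k_{s^{\ell}}k_{s^{\ell+3}}$, $k_{s^{\ell+1}}k_{s^{\ell+2}}$ and $k_{s^\ell}k_{s^{\ell+1}}$. In each product the two indices differ by an odd number, so one factor is an odd derivative and vanishes at $\pm1$ by Lemma \ref{T:BCs}.

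The remaining terms come from the sum in Lemma \ref{T:evlneqns1}(iv) and from the $ds$ evolution. They are of the form $k_{s^\ell}\,[\ldots]_{s^j}$ times $k\,k_{s^{\ell-j}}k_{ss}$ or $\lambda k^2 k_{s^{\ell-j}}$. Counting factors and derivatives, these are $P_4^{2\ell+2}(k)$ and $\lambda P_4^{2\ell}(k)$ respectively. Since the statement only records them in $P$-notation, no further integration by parts of these terms is needed.

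The main obstacle is purely bookkeeping: verifying in each integration by parts that the boundary product really contains an odd derivative. This parity check is what allows (v) to hold with no boundary contribution. I expect it to go through as in the proof of Lemma \ref{k8}.
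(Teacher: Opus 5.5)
Your proposal is correct and takes the route the paper intends. The paper gives no written proof, only pointing to standard calculations: substitute $F=-k_{ss}+\lambda k$ into Lemma~\ref{eq of general F} and Lemma~\ref{T:evlneqns1}, integrate by parts, and discard boundary terms using the vanishing of odd arc-length derivatives of $k$ at $\pm 1$. Your computations for (i)--(v) all check out, including the parity argument on the boundary products and the factor and derivative count giving $P_4^{2\ell+2}(k)$ and $\lambda P_4^{2\ell}(k)$.
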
 
\begin{cor}\label{lambda1}
 Under the flow \eqref{eq116} 
\begin{align*}%\label{osc3}
\frac{\mathrm{d} }{\mathrm{d} t} K_{\mbox{osc}} &= -2L \int_{\alpha} k_{ss}^2 \, ds + 3L \int_{\alpha} (k - \bar{k})^2 k_s^2 \, ds+ 6L \bar{k} \int_{\alpha} (k - \bar{k}) k_s^2 \, ds + 2L \bar{k}^2 \int_{\alpha} k_s^2 \, ds \nonumber   \\
&\quad - \frac{K_{\mbox{osc}}}{L} \int_{\alpha} k_{s}^2 \, ds  - 2 \lambda L \int_{\alpha} k_s^2 \, ds - \lambda \frac{K_{\mbox{osc}}}{L} \int_{\alpha} k^2 \, ds + \lambda L \int_{\alpha} (k - \bar{k})^4 \, ds  \nonumber\\
&\quad + 4 \lambda L \bar{k} \int_{\alpha} (k - \bar{k})^3 \, ds+ 5 \lambda L \bar{k}^2 \int_{\alpha} (k - \bar{k})^2 \, ds
\end{align*} 
\end{cor}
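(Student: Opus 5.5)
The identity follows from the product rule applied to $K_{\mbox{osc}} = L\int_\alpha (k-\bar{k})^2\,ds$, together with parts (ii) and (iv) of Lemma \ref{T:evlneqns2}; the rest is algebraic re-expansion about $\bar{k}$. First I write
$$\frac{\mathrm{d}}{\mathrm{d}t}K_{\mbox{osc}} = \Big(\frac{\mathrm{d}}{\mathrm{d}t}L\Big)\int_\alpha (k-\bar{k})^2\,ds + L\,\frac{\mathrm{d}}{\mathrm{d}t}\int_\alpha (k-\bar{k})^2\,ds \mbox{.}$$
In the first term I substitute $\frac{\mathrm{d}}{\mathrm{d}t}L = -\int_\alpha k_s^2\,ds - \lambda\int_\alpha k^2\,ds$ from part (ii) and use $\int_\alpha (k-\bar{k})^2\,ds = K_{\mbox{osc}}/L$. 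This produces exactly
$$-\frac{K_{\mbox{osc}}}{L}\int_\alpha k_s^2\,ds \quad\mbox{and}\quad -\lambda\frac{K_{\mbox{osc}}}{L}\int_\alpha k^2\,ds \mbox{.}$$
In the second term I multiply part (iv) by $L$. The terms $-2L\int_\alpha k_{ss}^2\,ds$ and $-2\lambda L\int_\alpha k_s^2\,ds$ then appear directly, and two groups remain to be rewritten.

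\textbf{The $k^2k_s^2$ group.} I expand $k^2 = (k-\bar{k})^2 + 2\bar{k}(k-\bar{k}) + \bar{k}^2$ inside $3L\int_\alpha k^2k_s^2\,ds$, giving
$$3L\int_\alpha (k-\bar{k})^2k_s^2\,ds + 6L\bar{k}\int_\alpha (k-\bar{k})k_s^2\,ds + 3L\bar{k}^2\int_\alpha k_s^2\,ds \mbox{.}$$
Combining the last piece with $-L\bar{k}^2\int_\alpha k_s^2\,ds$ from part (iv) yields the stated coefficient $2L\bar{k}^2$.

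\textbf{The $\lambda$-terms with $k^4$ and $k^2$.} I expand about $\bar{k}$ in $\lambda L\big(\int_\alpha k^4\,ds - \bar{k}^2\int_\alpha k^2\,ds\big)$, using the key fact $\int_\alpha (k-\bar{k})\,ds = 0$, which holds by the definition \eqref{eqkk} of $\bar{k}$. This gives
$$\int_\alpha k^4\,ds = \int_\alpha (k-\bar{k})^4\,ds + 4\bar{k}\int_\alpha (k-\bar{k})^3\,ds + 6\bar{k}^2\int_\alpha (k-\bar{k})^2\,ds + \bar{k}^4 L$$
and
$$\bar{k}^2\int_\alpha k^2\,ds = \bar{k}^2\int_\alpha (k-\bar{k})^2\,ds + \bar{k}^4 L \mbox{.}$$
Subtracting, the $\bar{k}^4L$ terms cancel and the $(k-\bar{k})^2$ coefficient becomes $6-1=5$. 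This gives the last three terms of the Corollary.

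There is no real obstacle here, since part (iv) is assumed. The only points needing care are that the time dependence of $\bar{k}$ is already accounted for in part (iv), and careful bookkeeping of coefficients in the binomial expansions, in particular the cancellation of $\bar{k}^4L$ and the vanishing of the linear terms $\int_\alpha (k-\bar{k})\,ds$. Collecting all terms yields the displayed formula.
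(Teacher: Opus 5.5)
Your proposal is correct and follows essentially the paper's route: apply the product rule to $K_{\mbox{osc}} = L\int_\alpha (k-\bar{k})^2\,ds$, insert Lemma \ref{T:evlneqns2} (ii) and (iv), then expand with $k=(k-\bar{k})+\bar{k}$ using $\int_\alpha (k-\bar{k})\,ds=0$; all coefficients check out, including the cancellation of the $\bar{k}^4L$ terms. The paper's one-line proof also mentions integration by parts, but since part (iv) is already stated in integrated form, your derivation rightly does without it.
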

\begin{proof}
The proof is straightforward using Lemma \ref{T:evlneqns2} (ii), (iv), integrating by parts and writing $k=(k-\bar{k})+\bar{k}$. 
\end{proof}
\begin{cor}
Under the flow \eqref{eq116}, while the solution exists we have
\begin{equation}
A\left ( t \right )=A\left ( 0 \right)-2\omega \pi \lambda\, t    
\end{equation}
\end{cor}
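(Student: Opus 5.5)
The identity follows by integrating in time the area evolution already recorded in Lemma \ref{T:evlneqns2}(i), namely $\frac{\mathrm{d}}{\mathrm{d}t}A = -2\pi\omega\lambda$. For completeness I would also re-derive that rate from the general formula in Lemma \ref{eq of general F}(iii). With speed (1), $F = -k_{ss} + \lambda k$, this gives
\begin{equation*}
\frac{\mathrm{d}}{\mathrm{d}t} A = -\int_\alpha F\, ds = \int_\alpha k_{ss}\, ds - \lambda \int_\alpha k\, ds .
\end{equation*}
The first integral is a boundary term, $k_s(1,t) - k_s(-1,t)$, and it vanishes by the no-curvature-flux condition \eqref{E:NBC2}. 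The second integral is $2\pi\omega$ by the definition \eqref{E:omega}.

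The key point is that the rate is constant in time. By Corollary \ref{T:omega}, $\int_\alpha k\, ds$ does not change along the flow. In fact $\omega = (\Theta_1-\Theta_2)/(2\pi)$ is fixed by the cone angle, because the Neumann condition \eqref{E:NBC1} forces the tangent at each end to be perpendicular to the corresponding cone side. Hence $A$ is differentiable on $[0,T)$ with constant derivative $-2\pi\omega\lambda$. Integrating from $0$ to $t$ by the fundamental theorem of calculus yields $A(t) = A(0) - 2\omega\pi\lambda\, t$ for every $t$ in the existence interval.

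The only step needing care is justifying the formula $\frac{\mathrm{d}}{\mathrm{d}t}A = -\int_\alpha F\,ds$ for a region bounded partly by the cone sides, which move relative to the curve's endpoints. In the area formula \eqref{E:area}, $\langle \alpha,\nu\rangle$ vanishes along the cone sides since they pass through the origin, so only the curve contributes. Differentiating then produces boundary terms at $u=\pm1$ that cancel, because the endpoints slide along lines through the origin with $\nu$ tangent to those lines there. This is exactly the content of Lemma \ref{eq of general F}(iii), which we may assume, so the proof reduces to the short computation above.
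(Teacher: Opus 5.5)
Your proposal is correct and matches the paper's proof, which simply integrates the constant rate $\frac{\mathrm{d}}{\mathrm{d}t}A=-2\pi\omega\lambda$ from Lemma \ref{T:evlneqns2}(i) over $[0,t]$. Your extra re-derivation of that rate from Lemma \ref{eq of general F}(iii), using \eqref{E:NBC2} and Corollary \ref{T:omega}, is the computation behind it; one small wording correction is that the endpoint terms $\tfrac12 F\langle \alpha,\tau\rangle$ from differentiating \eqref{E:area} vanish individually (since $\langle\alpha,\tau\rangle=0$ at each end) rather than cancelling each other.
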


\begin{proof} Assuming the solution to \eqref{eq116} exists on the interval $\left[ 0, t\right]$, the result follows by direct integration of Lemma \ref{T:evlneqns2} (i).
\end{proof}

\begin{remark}
Note that under \eqref{eq116} with $\lambda > 0$, while a solution exists the area bounded by the evolving curve and the two sides of the cone decreases at a constant rate, proportional to $\lambda$.  This behaviour is true for all curves, not only the self-similar shrinking circular arcs mentioned earlier. This implies that the maximal time $T$ of existence of a solution to \eqref{eq116} is always finite and may be estimated by
$$T < \frac{A_0}{2\pi\, \omega \lambda} \mbox{,}$$
where $A_0$ is the area enclosed by the initial curve together with the two sides of the cone.  Since $T<\infty$, Theorem \ref{blowup} gives a lower bound on the blow up rate of $\int_{\alpha} k^2 ds$ towards the maximal time.
\end{remark}

Next we establish another estimate on the maximal time of existence $T$.  We remark that such an estimate may be more useful in other situations where an enclosed or generalised area $A$ does not make sense.

\begin{cor}\label{T:Lbound5}
Under the flow \eqref{eq116} with generalised Neumann boundary conditions \eqref{E:NBC1} and \eqref{E:NBC2}, while the solution exists the length of the evolving curve satisfies
 \begin{equation}\label{L}
L^2(t)\leq L^2(0)-2\lambda \left ( 2\omega \pi  \right )^{2}t.  \end{equation}  
\end{cor}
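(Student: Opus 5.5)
The plan is to differentiate $L^2$ in time and bound the derivative above by a negative constant. By Lemma \ref{T:evlneqns2} (ii),
\begin{equation*}
\frac{\mathrm{d}}{\mathrm{d}t} L^2 = 2L\frac{\mathrm{d}L}{\mathrm{d}t} = -2L\int_\alpha k_s^2\, ds - 2\lambda L\int_\alpha k^2\, ds .
\end{equation*}
The first term on the right is nonpositive and will simply be discarded.

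For the second term we use the H\"older inequality \eqref{E:Holder}, $\int_\alpha k^2 ds \geq \frac{(\int_\alpha k\, ds)^2}{L} = \frac{4\pi^2\omega^2}{L}$. This gives
\begin{equation*}
-2\lambda L\int_\alpha k^2\,ds \leq -2\lambda (2\pi\omega)^2 .
\end{equation*}
By Corollary \ref{T:omega}, $\omega = \frac{\Theta_1-\Theta_2}{2\pi}$ is constant along the flow. Since $\lambda>0$, we therefore obtain
\begin{equation*}
\frac{\mathrm{d}}{\mathrm{d}t}L^2 \leq -2\lambda(2\omega\pi)^2
\end{equation*}
with a time-independent right-hand side.

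Integrating this inequality from $0$ to $t$, on any interval where the solution exists, gives \eqref{L} directly.

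There is no real obstacle here. The only points to check are that $L>0$ while the solution exists, so that \eqref{E:Holder} may be applied, and that $\omega$ is genuinely constant. The latter follows from Corollary \ref{T:omega} together with the fixed cone angle, and it is what makes the right-hand side independent of $t$. As a consequence, $T \leq \frac{L_0^2}{2\lambda(2\pi\omega)^2}$.
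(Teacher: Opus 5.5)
Your proposal is correct and follows essentially the same route as the paper. You discard the $-\int_\alpha k_s^2\,ds$ term in Lemma \ref{T:evlneqns2} (ii), apply the H\"older bound \eqref{E:Holder}, and integrate. The only difference is that you work with $\frac{\mathrm{d}}{\mathrm{d}t}L^2$ directly, whereas the paper integrates $\frac{\mathrm{d}L}{\mathrm{d}t}\leq -\lambda(2\pi\omega)^2/L$; these are the same computation.
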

\begin{proof}
By the H\"{o}lder inequality we have
$$2\pi\, \omega = \int_{\alpha } k\, ds \leq L^{\frac{1}{2}} \left( \int_{\alpha } k^2 ds\right)^{\frac{1}{2}} \mbox{.}$$
Discarding the first term in Lemma \ref{T:evlneqns2}, (ii) and using the above inequality on the second term we have
$$\displaystyle{\frac{\mathrm{d} }{\mathrm{d} t}L}\leq -\lambda \frac{\left( 2\pi\, \omega \right)^2}{L}$$
from which \eqref{L} follows by integration.
\end{proof}

\begin{remark}
  From Corollory \ref{T:Lbound5} we obtain an upper bound on the maximal existence time of a solution in terms of $L_0$ and $\lambda$, namely
  $$\displaystyle{T < \frac{L_0^2}{2\lambda(2\omega\pi)^2}}.$$
\end{remark}

Although we will not need it in the small oscillation of curvature setting, we next observe more generally that $K_{\mbox{osc}}$ is $L^{1}$ in time under the flow, which could be useful for future work.

 \begin{proposition} \label{E:L1}
Under the flow \eqref{eq116}, we have   
$$\left\| K_{\mbox{osc}}\right\|_{1} := \int_0^T K_{\mbox{osc}}(t) \,dt \leq \frac{L^4(0)}{4\pi ^{2}}.$$
\end{proposition}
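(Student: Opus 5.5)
The plan is to bound $K_{\mbox{osc}}$ by a multiple of the dissipation term appearing in $\frac{\mathrm{d}}{\mathrm{d}t}L^2$, and then integrate in time.

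\textbf{Step 1: dissipation identity for $L^2$.} By Lemma \ref{T:evlneqns2} (ii),
\[
\frac{\mathrm{d}}{\mathrm{d}t} L^2 = -2L\int_\alpha k_s^2\,ds - 2\lambda L\int_\alpha k^2\,ds \le -2L\int_\alpha k_s^2\,ds .
\]
Discarding the $\lambda$-term is legitimate because $\lambda>0$.

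\textbf{Step 2: bound $K_{\mbox{osc}}$ by the dissipation.} The function $g=k-\bar k$ has zero mean, so the Poincar\'e--Sobolev--Wirtinger inequality, Proposition \ref{psw}, gives
\[
\int_\alpha (k-\bar k)^2\,ds \le \frac{L^2}{\pi^2}\int_\alpha k_s^2\,ds .
\]
Multiplying by $L$,
\[
K_{\mbox{osc}} \le \frac{L^3}{\pi^2}\int_\alpha k_s^2\,ds \le -\frac{L^2}{2\pi^2}\,\frac{\mathrm{d}}{\mathrm{d}t}L^2 = -\frac{1}{4\pi^2}\,\frac{\mathrm{d}}{\mathrm{d}t}L^4 .
\]
The last equality is just $\frac{\mathrm{d}}{\mathrm{d}t}L^4 = 2L^2\,\frac{\mathrm{d}}{\mathrm{d}t}L^2$.

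\textbf{Step 3: integrate in time.} Integrating over $[0,t]$ for $t<T$ gives
\[
\int_0^t K_{\mbox{osc}}\,d\tau \le \frac{L^4(0)-L^4(t)}{4\pi^2} \le \frac{L^4(0)}{4\pi^2}.
\]
Since $K_{\mbox{osc}}\ge 0$, we may let $t\nearrow T$ by monotone convergence, which yields the claim. $T$ is finite by Corollary \ref{T:Lbound5}, but this is not even needed.

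\textbf{Main obstacle.} The only real point is matching the exponents of $L$. The naive route, using $\frac{\mathrm{d}}{\mathrm{d}t}L$ alone, gives $K_{\mbox{osc}}\le -\frac{L^3}{\pi^2}\frac{\mathrm{d}L}{\mathrm{d}t}$, and the $L^3$ factor cannot be pulled out of the time integral. This is why we differentiate $L^4$, which absorbs $L^3$ exactly: $-\frac{L^3}{\pi^2}\frac{\mathrm{d}L}{\mathrm{d}t} = -\frac{1}{4\pi^2}\frac{\mathrm{d}}{\mathrm{d}t}L^4$. This choice produces precisely the constant $\frac{1}{4\pi^2}$ in the statement.
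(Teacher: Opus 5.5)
Your proof is correct and is essentially the paper's argument. The paper also discards the $\lambda\int_\alpha k^2\,ds$ term in Lemma \ref{T:evlneqns2} (ii), applies the Wirtinger inequality (constant $L^2/\pi^2$) to $k-\bar k$, and integrates $K_{\mbox{osc}}\le -\frac{1}{4\pi^2}\frac{\mathrm{d}}{\mathrm{d}t}L^4$ in time, following Wheeler's closed-curve argument.
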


\begin{proof}
Discarding the second term in Lemma \ref{T:evlneqns2} (ii), then the proof follows as in  \cite{HO} and \cite{Wheeler2013}. 
\end{proof}

\begin{proof}[Proof of Proposition~{\upshape\ref{bd}}.] As in \cite{DKS02}, using Lemma \ref{T:evlneqns2} (v), we obtain 
\begin{multline}\label{bb3}
\frac{\mathrm{d} }{\mathrm{d} t}\frac{1}{2}\int _{\alpha }k_{s^{\ell}}^{2}\,ds=-\int _{\alpha }k_{s^{\ell+2}}^{2}\,ds-\lambda\int _{\alpha }k_{s^{\ell+1}}^{2}\,ds+\int _{\alpha }P_{4}^{2\ell+2}\left ( k \right )\,ds +\lambda \int _{\alpha }P_{4}^{2\ell}\left ( k \right )\,ds\\
   \leq -\int _{\alpha }k_{s^{\ell+2}}^{2}\,ds+ \lambda \int _{\alpha }P_{4}^{2\ell}\left ( k \right )\,ds +\int _{\alpha }P_{4}^{2\ell+2}\left ( k \right )\,ds 
\end{multline}
Next, we use the interpolation inequality in Proposition \ref{p}.  For the second term $\lambda \int _{\alpha }P_{4}^{2\ell}(k)\,ds$ in \eqref{bb3}, we obtain 
\begin{equation}\label{v1}\lambda \int _{\alpha }P_{4}^{2\ell}(k)\,ds\leq \varepsilon \int _{\alpha }k_{s^{\ell+2}}^{2}\,ds+c \varepsilon ^{-(\frac{2\ell+1}{3})}\left (\int _{\alpha }k^{2}\,ds  \right )^{\frac{2\ell+7}{3}}+c \left (\int _{\alpha }k^{2}\,ds  \right )^{2\ell+3}.\end{equation} 

For the third term $\int _{\alpha }P_{4}^{2\ell+2}(k)\,ds$ in \eqref{bb3}, we have 
\begin{equation}\label{v2}
 \int _{\alpha }P_{4}^{2\ell+2}(k)ds\leq \varepsilon \int _{\alpha }k_{s^{\ell+2}}^{2}\,ds+c\left ( \varepsilon  \right ) \left (\int _{\alpha }k^{2}\,ds  \right )^{2\ell+5}.
\end{equation}
Substituting these estimates into \eqref{bb3}, we obtain for small $\varepsilon$,

\begin{align}\label{kk22}
\frac{\mathrm{d} }{\mathrm{d} t}\frac{1}{2}\int _{\alpha }k_{s^{\ell}}^{2}\,ds \nonumber
& \leq -\frac{1}{2}\int _{\alpha }k_{s^{\ell+2}}^{2}\,ds+c \, \varepsilon ^{-(\frac{2\ell+1}{3})}\left (\int _{\alpha }k^{2}\,ds  \right )^{\frac{2\ell+7}{3}}\\
&\qquad +c \left (\int _{\alpha }k^{2}\,ds  \right )^{2\ell+3}+ c\left ( \varepsilon  \right )\left (\int _{\alpha }k^{2}\,ds  \right )^{2\ell+5}\nonumber\\
&  \leq-\frac{1}{2}\int _{\alpha }k_{s^{\ell+2}}^{2}\,ds +c( \varepsilon)\left (\int _{\alpha }k^{2}\,ds  \right )^{\frac{2\ell+7}{3}}+c \left (\int _{\alpha }k^{2}\,ds  \right )^{2\ell+3}\nonumber\\&\qquad +c\left ( \varepsilon  \right )\left (\int _{\alpha }k^{2}\,ds  \right )^{2\ell+5}.
\end{align}
Discarding the first term on the right-hand side of \eqref{kk22}, and in view of the boundedness of $\int _{\alpha }k^{2}\,ds$, the result follows by integration over the finite time interval.
\end{proof}

Now we state our main result for this section.
\begin{theorem} \label{T:main18}
Let $\lambda>0$ be constant.  Given $\alpha_0: \left[ -1, 1\right]\to \mathbb{R}^{2}$ compatible with \eqref{E:NBC1} and \eqref{E:NBC2} and with boundary points sufficiently far from the cone tip, the length-penalised curve diffusion flow \eqref{eq116} with generalised Neumann boundary conditions \eqref{E:NBC1} and \eqref{E:NBC2} has a unique solution $\alpha\left( \cdot, t\right)$ on $\left[ 0, T\right)$.  In the case that the supporting cone satisfies 
$$\omega ^{2}\leq \max\left ( \frac{1}{10}, \frac{ \pi ^{2}+\lambda L^2_0}{4{\pi ^{2}}+10\lambda L^2_0} \right )$$ 
and $K_{\mbox{osc}}[\alpha_0] < \overline{K}\left( \omega, \lambda L_0^2 \right)$, where $L_0 = L\left[ \alpha_0 \right]$, then the solution curves (rescaled to preserve length) converge, exponentially in the rescaled time variable, in the $C^\infty$-topology, to a circular arc of length $L_0$.
\end{theorem}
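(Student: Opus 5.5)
The plan is to rescale the flow so that length is preserved and then run the small-oscillation argument of \cite{GM24} on the rescaled curves. Set $\tilde\alpha(\cdot,\tau)=\frac{L_0}{L(t)}\alpha(\cdot,t)$, with rescaled time $\tau(t)=\int_0^t \frac{L_0^2}{L^2(t')}\,dt'$. The cone is invariant under dilations about its tip, so \eqref{E:NBC1}, \eqref{E:NBC2} and Lemma \ref{T:BCs} still hold for $\tilde\alpha$. The quantity $K_{\mbox{osc}}$ is scale invariant, hence unchanged by the rescaling. Using Lemma \ref{T:evlneqns2}(ii), the rescaled curve moves with normal speed $-\tilde k_{ss}+\lambda\tilde k$ plus a tangential-free dilation term proportional to $\langle\tilde\alpha,\nu\rangle$, and the parameter $\lambda$ appears only through the scale-invariant combination $\lambda L_0^2$. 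Since $L\le L_0$ by Lemma \ref{T:evlneqns2}(ii), we get $\tau\ge t$. I would also show that $\tau\to\infty$ as $t\to T$: once $K_{\mbox{osc}}$ is controlled, the estimate $dL^2/dt\approx -2\lambda(2\pi\omega)^2$ gives $L^2\sim c(T-t)$, and hence $\tau\sim\log\frac{1}{T-t}$.

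The core step is a differential inequality for $K_{\mbox{osc}}$, starting from Corollary \ref{lambda1}. The good terms are $-2L\int k_{ss}^2$ and $-2\lambda L\int k_s^2$. I would estimate the remaining terms as follows.
\begin{itemize}
\item The terms $3L\int (k-\bar k)^2k_s^2$ and $6L\bar k\int (k-\bar k)k_s^2$ are bounded using \eqref{eqppsw} by $c\sqrt{K_{\mbox{osc}}}\,L\int k_{ss}^2$ and by $c\,K_{\mbox{osc}}L\int k_{ss}^2$-type quantities; this uses Proposition \ref{psw} applied to $k_s$, which vanishes at the ends.
\item The term $2L\bar k^2\int k_s^2$, with $\bar k=2\pi\omega/L$, is bounded by $8\pi^2\omega^2\frac{L^2}{\pi^2}\frac{1}{L}\int k_{ss}^2$.
\item For the $\lambda$-terms, the pieces $\lambda L\int(k-\bar k)^4$ and $4\lambda L\bar k\int(k-\bar k)^3$ are bounded by $\lambda\,c\,(K_{\mbox{osc}}+\sqrt{K_{\mbox{osc}}}\,\omega)\int k_s^2$ via \eqref{eqppsw}.
\item The dangerous term $5\lambda L\bar k^2\int(k-\bar k)^2$ must be absorbed by combining $-2\lambda L\int k_s^2$ and $-\lambda\frac{K_{\mbox{osc}}}{L}\int k^2$ (the latter is at least $\lambda K_{\mbox{osc}}\,4\pi^2\omega^2/L^2$) with Wirtinger's inequality $\int(k-\bar k)^2\le\frac{L^2}{\pi^2}\int k_s^2$.
\end{itemize}
Balancing the coefficients gives exactly the two cone conditions. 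The first uses the $k_{ss}$ terms, where we need $8\omega^2<2-\epsilon$ together with the $\lambda$-part; this yields $\omega^2\le\frac1{10}$-type conditions. The alternative split, with weights involving $\lambda L_0^2$, yields the other condition $\omega^2\le\frac{\pi^2+\lambda L^2}{4\pi^2+10\lambda L^2}$. Taking whichever is better explains the max in the statement. For $K_{\mbox{osc}}<\overline K$ small, the outcome is $\frac{d}{dt}K_{\mbox{osc}}\le -\delta\frac{1}{L^{2}}K_{\mbox{osc}}$, again by Wirtinger applied twice. In the rescaled time this reads $\frac{d}{d\tau}K_{\mbox{osc}}\le-\delta' K_{\mbox{osc}}$, so smallness is preserved and $K_{\mbox{osc}}$ decays exponentially in $\tau$. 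I expect this coefficient bookkeeping, and showing the bad $\bar k^2$ terms are dominated precisely under the stated bound on $\omega^2$, to be the main obstacle. Also $L$ varies, so the condition has to be checked along the flow, not just at $L_0$. Since $\lambda L^2$ decreases in $t$, I would verify that the $\frac1{10}$ branch, or the monotonicity of the fraction in $\lambda L^2$, keeps the condition valid for $t>0$.

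To get higher-order convergence, I would first bound $\int\tilde k^2$: this follows from $K_{\mbox{osc}}$ small and $\tilde L=L_0$. By Theorem \ref{blowup} and Proposition \ref{bd}, applied to the rescaled flow on finite $\tau$-intervals, the solution then exists for all $\tau$. Interpolating the exponential decay of $\|\tilde k-\bar{\tilde k}\|_2$ against the at most linear growth of $\int\tilde k_{s^\ell}^2$ (the Remark after Proposition \ref{bd}), using Proposition \ref{p}, gives exponential decay of every $\|\tilde k_{s^\ell}\|_2$. The rescaled speed then decays exponentially, which has two consequences:
\begin{itemize}
\item the rescaled curves converge in $C^\infty$ to a curve with constant curvature $2\pi\omega/L_0$ meeting both sides of the cone perpendicularly, i.e. an arc of length $L_0$ centred at the tip;
\item the total displacement of the endpoints is bounded, which justifies the hypothesis that the boundary points start sufficiently far from the tip, so that they cannot reach it before time $T$.
\end{itemize}
Uniqueness follows from the short-time theory.
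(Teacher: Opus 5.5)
Your treatment of $K_{\mbox{osc}}$ is fine. In fact, keeping $-\lambda\frac{K_{\mbox{osc}}}{L}\int_\alpha k^2\,ds$, which the paper discards, lowers the coefficient $5$ of $\lambda L\bar k^2\int_\alpha(k-\bar k)^2ds$ to $4$. Your bookkeeping therefore allows $\omega^2<\frac18$ in the first branch rather than $\frac1{10}$. So you do not get ``exactly'' the stated conditions, but weaker ones, which is harmless.

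The genuine gap is your choice of clock. With $\tilde\alpha=\frac{L_0}{L}\alpha$ and $\frac{\mathrm{d}\tau}{\mathrm{d}t}=\frac{L_0^2}{L^2}$ one finds
\[ \partial_\tau\tilde\alpha=\Big(\frac{L_0^2}{L^2}\,\tilde k_{\tilde s\tilde s}-\lambda\,\tilde k\Big)\nu-\frac{L\,L'}{L_0^2}\,\tilde\alpha . \]
So the rescaled speed is not $-\tilde k_{\tilde s\tilde s}+\lambda\tilde k$: the fourth-order coefficient blows up as $L\searrow0$. Your $\tau$ is the curve-shortening clock. The clock that normalises the leading term is $\frac{\mathrm{d}\tilde t}{\mathrm{d}t}=L^{-4}$, as in the paper, and then the lower-order coefficient is the time-dependent $\lambda L^2$, not $\lambda L_0^2$.

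This breaks your higher-order step, because the ``at most linear growth'' you quote is not available in $\tau$.
\begin{enumerate}
\item Proposition \ref{bd} needs a bound on $\int_\alpha k^2ds=(K_{\mbox{osc}}+4\pi^2\omega^2)/L$, which fails as $L\searrow0$. Its scale-invariant version, Corollary \ref{T:rescaled}, gives growth of $L^{2\ell+1}\int_\alpha k_{s^\ell}^2ds$ that is linear in $\tilde t$, not in $\tau$.
\item From $\frac{\mathrm{d}}{\mathrm{d}t}L^2\le-8\pi^2\omega^2\lambda$ you get $L^2\le L_0^2e^{-8\pi^2\omega^2\lambda\tau/L_0^2}$. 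Since $\frac{\mathrm{d}\tilde t}{\mathrm{d}\tau}=\frac{1}{L_0^2L^2}$, $\tilde t$ grows at least exponentially in $\tau$. In $\tau$ you therefore only control exponential growth of the higher norms.
\item The fixed rate you keep for $K_{\mbox{osc}}$ need not beat this growth. After weakening $L^{-4}$ to $L_0^{-2}L^{-2}$ it is of order $\delta\pi^4/L_0^4$, which is smaller than $8\pi^2\omega^2\lambda/L_0^2$ once $\lambda L_0^2$ is large.
\item In any case, the halving of rates in the induction of Corollary \ref{cor3.14} eventually loses to a fixed exponential growth rate.
\end{enumerate}

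The same issue undermines your argument that $\tau\to\infty$. We have $\frac{\mathrm{d}}{\mathrm{d}t}L^2=-2L^{-2}\big(L^3\int_\alpha k_s^2ds\big)-2\lambda\big(K_{\mbox{osc}}+4\pi^2\omega^2\big)$. Control of $K_{\mbox{osc}}$ says nothing about the first term; what is needed is that $L^3\int_\alpha k_s^2ds$ be integrable in $\tilde t$.

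The remedy is simply not to discard the $L^{-4}$. Wirtinger twice gives $\frac{\mathrm{d}}{\mathrm{d}t}K_{\mbox{osc}}\le-\delta\pi^4L^{-4}K_{\mbox{osc}}$, i.e.\ exponential decay in $\tilde t$ (Corollaries \ref{T:Koscexp} and \ref{kosc5}). This matches the linear-in-$\tilde t$ growth of Corollary \ref{T:rescaled} and makes the interpolation work. That is the paper's route, and since $\tilde t$ grows exponentially in $\tau$, it also gives a much stronger decay statement than yours. Alternatively, you could keep $\tau$, but then you must prove uniform-in-time bounds on $L^{2\ell+1}\int_\alpha k_{s^\ell}^2ds$. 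That can be done by retaining $-\int_\alpha k_{s^{\ell+2}}^2ds$ in the evolution equation and applying Wirtinger, which your proposal does not do.
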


\begin{remark}
\begin{enumerate}
    \item The constant $\overline{K}$ above is computable explicitly, but different depending on the size of $\cfrac{\pi^2+\lambda L^2_0}{4\pi^2+10\lambda L^2_0}$ relative to $\frac{1}{10}$.  In each case it is the positive root of a quadratic polynomial in $x^{\frac{1}{2}}$.  We write this polynomial explicitly in the proof to follow.  The product $\lambda L_0^2$ appears naturally in view of the scaling of terms in the normal speed.
    \item The above `sufficiently far' may be quantified.  Under the small oscillation of curvature assumption, we obtain exponential decay of the speed of evolution under suitable rescaling.  Thus, under the rescaling, the distance each point on the evolving curve can travel is finite, in particular, the boundary points can only travel a finite distance.  Making sure the initial curve has boundary points further from the cone tip than this distance ensures they never reach the cone tip under the rescaled evolution. %in the unrescaled flow.  
   %{ \color{red} Probably just need to reword this to make it clearer.}
\end{enumerate}

\end{remark}

Given the previous results, the next ingredient required to prove Theorem \ref{T:main18} is control on $K_{\mbox{osc}}$.
\begin{cor}  Under the flow \eqref{eq116} 
 \begin{align}\label{ekosc2}
\frac{\mathrm{d} }{\mathrm{d} t} K_{\mbox{osc}} &\leq -L \Bigg\{ \left [  2 - 8 \omega^2 \right ] -8\omega \left[ 3+ \frac{2\lambda L^2}{\pi^2}  \right] \sqrt{K_{\mbox{osc}}} -\frac{2}{\pi }\left [ 3+\frac{\lambda L^{2}}{\pi ^{2}} \right ] K_{\mbox{osc}} \Bigg\}\int_{\alpha} k_{ss}^2 \, ds\nonumber\\
 & \quad +2\lambda L\left (10\omega ^{2}-1  \right )\int _{\alpha }k_{s}^{2}\,ds
\end{align} 
\end{cor}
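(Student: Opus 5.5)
We start from the exact identity for $\frac{\mathrm{d}}{\mathrm{d}t}K_{\mbox{osc}}$ in Corollary \ref{lambda1} and estimate it term by term. Each bad term is bounded by a multiple of $L\int_\alpha k_{ss}^2\,ds$ or of $\lambda L\int_\alpha k_s^2\,ds$. The tools are the boundary conditions (Lemma \ref{T:BCs}, so that $k_s=0$ at $\pm1$), the identity $\bar k = 2\pi\omega/L$, and the Poincaré--Sobolev--Wirtinger inequalities. The good terms are $-2L\int k_{ss}^2$, $-2\lambda L\int k_s^2$ and the two negative multiples of $K_{\mbox{osc}}$, and the last two may simply be discarded.

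\textbf{Estimates for the $\lambda$-free terms.} Since $k_s$ vanishes at both ends, Propositions \ref{psw} and \ref{eq266} give
\[
\int k_s^2 \le \frac{L^2}{\pi^2}\int k_{ss}^2, \qquad \|k_s\|_\infty^2\le \frac{L}{\pi}\int k_{ss}^2 .
\]
Since $k-\bar k$ has zero mean, \eqref{eqppsw} together with $\int (k-\bar k)_s^2=\int k_s^2$ controls $\|k-\bar k\|_\infty$. We then bound the terms as follows.
\begin{itemize}
\item[(a)] $3L\int (k-\bar k)^2 k_s^2 \le 3L\|k_s\|_\infty^2 \int (k-\bar k)^2 \le \frac{3}{\pi}K_{\mbox{osc}}\,L\int k_{ss}^2$.
\item[(b)] By Cauchy--Schwarz, $6L\bar k\int (k-\bar k)k_s^2 \le 6L\bar k\,\|k_s\|_\infty \bigl(\int (k-\bar k)^2\bigr)^{1/2}\bigl(\int k_s^2\bigr)^{1/2}$. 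Inserting $\bar k=2\pi\omega/L$ and the bounds above, all powers of $L$ cancel, leaving a constant multiple of $\omega\sqrt{K_{\mbox{osc}}}\,L\int k_{ss}^2$.
\item[(c)] $2L\bar k^2\int k_s^2 \le 2L\cdot \frac{4\pi^2\omega^2}{L^2}\cdot \frac{L^2}{\pi^2}\int k_{ss}^2 = 8\omega^2 L\int k_{ss}^2$.
\end{itemize}
Term (c) accounts for the $2-8\omega^2$ coefficient.

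\textbf{Estimates for the $\lambda$ terms.} These are the terms $\lambda L\int (k-\bar k)^4$, $4\lambda L\bar k\int (k-\bar k)^3$ and $5\lambda L\bar k^2\int (k-\bar k)^2$.
\begin{itemize}
\item[(d)] $\lambda L\int(k-\bar k)^4\le \lambda L\|k-\bar k\|_\infty^2\int(k-\bar k)^2 \le \lambda \frac{2L^2}{\pi}K_{\mbox{osc}}\int k_s^2$. Converting $\int k_s^2$ to $\int k_{ss}^2$ with the $L^2/\pi^2$ factor gives $\frac{2\lambda L^2}{\pi^3}K_{\mbox{osc}}\,L\int k_{ss}^2$, which matches the $\frac{2}{\pi}\frac{\lambda L^2}{\pi^2}$ in the statement.
\item[(e)] The cubic term is handled the same way, with $\|k-\bar k\|_\infty\bigl(\int(k-\bar k)^2\bigr)^{1/2}$, producing $\omega\sqrt{K_{\mbox{osc}}}\,\frac{\lambda L^2}{\pi^2}L\int k_{ss}^2$-type terms.
\item[(f)] For the quadratic term, PSW gives $5\lambda L\bar k^2\int(k-\bar k)^2 \le 5\lambda L\frac{4\pi^2\omega^2}{L^2}\frac{L^2}{\pi^2}\int k_s^2 = 20\lambda\omega^2 L\int k_s^2$. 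Combined with $-2\lambda L\int k_s^2$, this yields exactly $2\lambda L(10\omega^2-1)\int k_s^2$, consistent with the target.
\end{itemize}

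\textbf{Main obstacle.} The delicate step is the bookkeeping of constants in (b) and (e), where the stated coefficient is $8\omega\bigl[3+\frac{2\lambda L^2}{\pi^2}\bigr]$. I would use the $\frac{2L}{\pi}$ sup bound for $k-\bar k$ and the $\frac{L}{\pi}$ bound for $k_s$, choosing which factor carries the sup norm so the constants match.

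For (b), I expect $6\cdot 2\pi\omega\cdot\sqrt{L/\pi}\cdot (L/\pi)\cdot L^{-1/2}\ldots$ to give $12\omega/\sqrt{\pi}$ or similar, which is bounded by $24\omega$. Generous rounding up is acceptable, since only an inequality is claimed.

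For (e), $4\lambda L\bar k\|k-\bar k\|_\infty\int(k-\bar k)^2$ with $\|k-\bar k\|_\infty\le (2L/\pi)^{1/2}(\int k_s^2)^{1/2}$ leaves a factor $(\int k_s^2)^{1/2}(\int (k-\bar k)^2)^{1/2}$. I would bound this by $\sqrt{K_{\mbox{osc}}/L}\,(\frac{L}{\pi}\ldots)$, use PSW once more to pass to $\int k_{ss}^2$, and absorb the numerical constants into $16\omega\frac{\lambda L^2}{\pi^2}$.

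Collecting (a)--(f) gives \eqref{ekosc2}.
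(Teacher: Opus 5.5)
Your proposal is correct and matches the paper's proof: discard the fifth and seventh (negative) terms of Corollary \ref{lambda1}, bound each remaining term separately using Propositions \ref{psw} and \ref{eq266}, H\"older and $\bar k = 2\pi\omega/L$, then merge $20\lambda\omega^2 L\int_\alpha k_s^2\,ds$ with $-2\lambda L\int_\alpha k_s^2\,ds$. Where you place the sup norm even gives smaller constants in (a), (b) and (e), e.g.\ $\frac{3}{\pi}$ and $\frac{12}{\sqrt{\pi}}\omega$ instead of the paper's $\frac{6}{\pi}$ and $24\omega$, which is harmless. There is one small slip: the middle expression in (d) should be $\frac{2\lambda L}{\pi}K_{\mbox{osc}}\int_\alpha k_s^2\,ds$, since you kept an extra factor of $L$ when replacing $\int_\alpha (k-\bar k)^2\,ds$ by $K_{\mbox{osc}}$; your final coefficient $\frac{2\lambda L^3}{\pi^3}K_{\mbox{osc}}$ is nevertheless right.
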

\begin{proof}
We begin by discarding the fifth and seventh nonpositive terms on the right-hand side of Corollary \ref{lambda1}.  Then, similarly as in \cite{WW}, using Proposition \ref{psw} \eqref{eqpsw},  \eqref{eqppsw}, and the H\"{o}lder inequality, we obtain
$$3L\int _{\alpha }\left ( k-\bar{k} \right )^{2}k_{s}^{2}ds\leq \frac{6L}{\pi }K_{\mbox{osc}}\left\|k_{ss} \right\|_{2}^{2},$$
 $$6\bar{k}L\int _{\alpha }\left ( k-\bar{k} \right )k_{s}^{2}ds\leq 24\omega L \sqrt{K_{\mbox{osc}}} \left\|k_{ss} \right\|_{2 }^{2},$$
 and\begin{align*}
 2L\bar{k}^{2}\left\|k_{s} \right\|_{2}^{2}&\leq 8\omega ^{2}L\left\|k_{ss} \right\|_{2}^{2}
\end{align*}  
Similarly,
\begin{equation}\label{osc5e}
5\lambda L\bar{k}^{2}\int _{\alpha }\left ( k-\bar{k} \right )^{2}\,ds\leq 20\lambda L\omega ^{2}\int _{\alpha }k_{s}^{2}\,ds    
\end{equation}
$$4 \lambda L \bar{k} \int (k - \bar{k})^3 \, ds\leq 16 \lambda \omega   \frac{L^3}{\pi^2}   \sqrt{K_{\mbox{osc}}}\int_{\alpha } k_{ss}^2 \, ds$$
\begin{multline*}
\lambda L \int (k - \bar{k})^4 \, ds 
\leq \lambda L \| k - \bar{k} \|^2_{\infty } \int (k - \bar{k})^2 \, ds \\
\leq   \frac{2\lambda L}{\pi} \,K_{\mbox{osc}} \int_{\alpha } k_{s}^{2}  \, ds 
\leq \frac{2 \lambda L^3}{\pi^3}\,K_{\mbox{osc}} \int_{\alpha } k_{ss}^2 \, ds
\end{multline*}
Combining the sixth term on the right-hand side of Corollary \ref{lambda1} with \eqref{osc5e} 
gives
$$2\lambda L\left (10\omega ^{2}-1  \right )\int _{\alpha }k_{s}^{2}\,ds $$
Using these estimates in Corollary \ref{lambda1} completes the proof.    
\end{proof}

\begin{cor}
    Under the flow \eqref{eq116}, in the case $\omega^2 \leq \frac{1}{10}$ we have
\begin{equation*}
    \frac{\mathrm{d} }{\mathrm{d} t} K_{\mbox{osc}} \leq -L \left\{ \left [  2 - 8 \omega^2 \right ] -8\omega \left[ 3+ \frac{2\lambda L_0^2}{\pi^2}  \right] \sqrt{K_{\mbox{osc}}} 
   -\frac{2}{\pi }\left [ 3+\frac{\lambda L_0^{2}}{\pi ^{2}} \right ] K_{\mbox{osc}} \right\}\int_{\alpha} k_{ss}^2 \, ds
\end{equation*}
    while if $\omega ^{2}>  \frac{1}{10}$ we have
     \begin{multline*}%\label{ekosc2}
\frac{\mathrm{d} }{\mathrm{d} t} K_{\mbox{osc}} \leq -L \Bigg\{ \left [  2 - 8 \omega^2 - 2\left( 10 \omega^2 - 1\right) \frac{\lambda L_0^2}{\pi^2}\right ] \\-8\omega \left[ 3+ \frac{2\lambda L_0^2}{\pi^2}  \right] \sqrt{K_{\mbox{osc}}} 
 -\frac{2}{\pi }\left [ 3+\frac{\lambda L_0^{2}}{\pi ^{2}} \right ] K_{\mbox{osc}} \Bigg\}\int_{\alpha} k_{ss}^2 \, ds
\end{multline*} 
It follows in each case that if $K_{\mbox{osc}}\left[ \alpha_0\right]$ is sufficiently small, say $K_{\mbox{osc}}\left[ \alpha_0\right] \leq \overline{K}\left( \omega, \lambda L_0^2\right)$, then it remains so.
\end{cor}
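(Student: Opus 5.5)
Starting from the estimate \eqref{ekosc2} of the preceding Corollary, I will first replace every occurrence of $L$ inside the braces by $L_0$. By Corollary \ref{T:Lbound5} (or directly from Lemma \ref{T:evlneqns2} (ii)), $L(t)\le L_0$ while the solution exists. The coefficients $8\omega[3+2\lambda L^2/\pi^2]$ and $\frac{2}{\pi}[3+\lambda L^2/\pi^2]$ multiply nonnegative quantities and enter with a minus sign, so replacing $L$ by $L_0$ only makes the braces smaller. This gives a valid upper bound for $\frac{d}{dt}K_{\mbox{osc}}$.

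Next comes the term $2\lambda L(10\omega^2-1)\int k_s^2\,ds$, which I treat by cases.
\begin{itemize}
\item[] \emph{Case $\omega^2\le \frac{1}{10}$.} The term is nonpositive and can simply be discarded, which gives the first inequality.
\item[] \emph{Case $\omega^2>\frac{1}{10}$.} The term is positive. Since $k_s$ vanishes at the endpoints (Lemma \ref{T:BCs}), the second part of Proposition \ref{psw} gives $\int k_s^2\,ds\le \frac{L^2}{\pi^2}\int k_{ss}^2\,ds\le \frac{L_0^2}{\pi^2}\int k_{ss}^2\,ds$. Hence the term is at most $L\cdot 2(10\omega^2-1)\frac{\lambda L_0^2}{\pi^2}\int k_{ss}^2\,ds$. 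Absorbing this into the bracket produces exactly the stated constant term $2-8\omega^2-2(10\omega^2-1)\lambda L_0^2/\pi^2$.
\end{itemize}

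For the preservation claim, write $x=K_{\mbox{osc}}$ and denote the brace by $Q(\sqrt{x})=a-b\sqrt{x}-cx$, with $b,c>0$ depending only on $\omega$ and $\lambda L_0^2$. The hypothesis on $\omega^2$ in Theorem \ref{T:main18} makes $a>0$.
\begin{itemize}
\item[] If $\omega^2\le\frac{1}{10}$, then $a=2-8\omega^2\ge 2-\frac{8}{10}>0$.
\item[] If $\omega^2>\frac{1}{10}$, then $a>0$ is equivalent to $\omega^2<\frac{\pi^2+\lambda L_0^2}{4\pi^2+10\lambda L_0^2}$. Indeed, $2\pi^2-8\omega^2\pi^2-20\omega^2\lambda L_0^2+2\lambda L_0^2>0$ rearranges to this.
\end{itemize}
One point needs care: strictness. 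For $a>0$ the inequality on $\omega^2$ must be strict, or else $\overline{K}$ degenerates to zero. I would note this and take $\overline{K}$ to be the square of the positive root of $a-by-cy^2$, so that $Q>0$ on $[0,\overline{K})$.

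Finally, I run a continuity argument. Suppose $K_{\mbox{osc}}(0)<\overline{K}$, and let $t^*$ be the first time at which $K_{\mbox{osc}}=\overline{K}$. On $[0,t^*]$ we have $Q\ge0$, so $\frac{d}{dt}K_{\mbox{osc}}\le0$ and therefore $K_{\mbox{osc}}(t^*)\le K_{\mbox{osc}}(0)<\overline{K}$, a contradiction. So $K_{\mbox{osc}}$ is in fact nonincreasing and stays below $\overline{K}$ on $[0,T)$.

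The only delicate point is using the bound $L\le L_0$ consistently, and in particular keeping the sign of each coefficient straight when $L$ is replaced by $L_0$.
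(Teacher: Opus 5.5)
Your proposal is correct and follows essentially the same route as the paper. When $\omega^2\le\frac{1}{10}$ you discard $2\lambda L(10\omega^2-1)\int_\alpha k_s^2\,ds$; otherwise you bound it by the Dirichlet form of Proposition \ref{psw} (using $k_s(\pm1)=0$) and absorb it, then use $L\le L_0$ on the inner factors and conclude preservation from nonpositivity of the coefficient of $\int_\alpha k_{ss}^2\,ds$. Your first-exit-time argument and the remark that $a>0$ needs a strict inequality on $\omega^2$ make explicit what the paper leaves implicit. However, your argument only covers $K_{\mbox{osc}}[\alpha_0]<\overline{K}$ (the hypothesis of Theorem \ref{T:main18}); the borderline case $K_{\mbox{osc}}[\alpha_0]=\overline{K}$ permitted in the statement needs a short Gronwall step, since the brace vanishes there.
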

\begin{proof}
 In the case $\omega^2 \leq \frac{1}{10}$ we neglect the negative $\int_\alpha k_s^2\, ds$ term on the right-hand side of \eqref{ekosc2}.  In the other case, this same term is positive, so we instead apply Proposition \ref{psw} and include the resulting term in the coefficient of $\int_\alpha k_{ss}^2\,ds$.  In both cases the inequalities are then established by estimating $L^2 \leq L_0^2$ on the inside factors.

    The proof is completed in each case by noting that if the coefficient of $\int_\alpha k_{ss}^2\,ds$ is nonpositive, then $K_{\mbox{osc}}$ does not increase.  Explicitly, in the case $\omega^2 < \frac{1}{10}$, this will be true provided $K_{\mbox{osc}}\left[ \alpha_0 \right]$ is not larger than the solution $x$ of
\begin{equation} \label{E:S1}
  \left [  2 - 8 \omega^2 \right ] -8\omega \left[ 3+ \frac{2\lambda L_0^2}{\pi^2}  \right] x^{\frac{1}{2}}  -\frac{2}{\pi }\left [ 3+\frac{\lambda L_0^{2}}{\pi ^{2}} \right ] x =0\mbox{,}
  \end{equation}
    while in the other case $K_{\mbox{osc}}\left[ \alpha_0 \right]$ should be not larger than the solution of
\begin{equation} \label{E:S2}
  \left [  2 - 8 \omega^2 - 2\left( 10 \omega^2 - 1\right) \frac{\lambda L_0^2}{\pi^2} \right ] -8\omega \left[ 3+ \frac{2\lambda L_0^2}{\pi^2}  \right] x^{\frac{1}{2}}  -\frac{2}{\pi }\left [ 3+\frac{\lambda L_0^{2}}{\pi ^{2}} \right ] x =0\mbox{.}
  \end{equation}
    \end{proof}
\begin{remark}
Since
 $$ \int _{\alpha }k^{2}\,ds=\frac{ K_{\mbox{osc}}}{L}+\frac{4\pi ^{2}\omega ^{2}}{L}$$
we observe that the boundedness of $K_{\mbox{osc}}$ under the flow implies that $\int_{\alpha } k^2 ds$ remains bounded unless $L\searrow 0$.  Hence, in view of the preserved smallness condition on $K_{\mbox{osc}}$ we conclude existence of solutions until the time that $L\searrow 0$.  In other words, the flow can be extended using short-time existence as long as $L>0$ and it makes sense to consider a rescaling of the evolving curve to fix length and examine the corresponding limiting shape.
\end{remark}
To prepare for rescaling, we first use Proposition \ref{psw} twice in reverse to obtain the following:
\begin{cor} \label{T:Koscexp}
Suppose $\alpha_0$ satisfies $K_{\mbox{osc}}\left[ \alpha_0 \right] < \overline{K}$.  Under the flow \eqref{eq116}, in the case $\omega^2 \leq \frac{1}{10}$ we have
\begin{equation*}
    \frac{\mathrm{d} }{\mathrm{d} t} K_{\mbox{osc}} \leq -\frac{\pi^4}{L^4} \left\{ \left [  2 - 8 \omega^2 \right ] -8\omega \left[ 3+ \frac{2\lambda L_0^2}{\pi^2}  \right] \sqrt{K_{\mbox{osc}}} -\frac{2}{\pi }\left [ 3+\frac{\lambda L_0^{2}}{\pi ^{2}} \right ] K_{\mbox{osc}} \right\}K_{\mbox{osc}}
\end{equation*}
    while in the case $\omega ^{2}>  \frac{1}{10}$ we have
     \begin{multline*}
\frac{\mathrm{d} }{\mathrm{d} t} K_{\mbox{osc}} \leq -\frac{\pi^4}{L^4} \Bigg\{ \left [  2 - 8 \omega^2 - 2\left( 10 \omega^2 - 1\right) \frac{\lambda L_0^2}{\pi^2}\right ] \\-8\omega \left[ 3+ \frac{2\lambda L_0^2}{\pi^2}  \right] \sqrt{K_{\mbox{osc}}} 
 -\frac{2}{\pi }\left [ 3+\frac{\lambda L_0^{2}}{\pi ^{2}} \right ] K_{\mbox{osc}} \Bigg\}K_{\mbox{osc}}
\end{multline*} 
\end{cor}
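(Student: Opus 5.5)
The plan is to start from the two inequalities in the preceding Corollary. There the right-hand side has the form $-L\,\{\cdots\}\int_\alpha k_{ss}^2\,ds$, and the curly bracket is nonnegative by the preserved smallness $K_{\mbox{osc}}\le\overline{K}$. It therefore suffices to bound $L\int_\alpha k_{ss}^2\,ds$ from below by $\frac{\pi^4}{L^4}K_{\mbox{osc}}$. Because the bracket is nonnegative, replacing $\int_\alpha k_{ss}^2\,ds$ by a smaller quantity only weakens (increases) the right-hand side, so the inequality direction is preserved.

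The lower bound comes from applying Proposition \ref{psw} twice. First, $k_s$ vanishes at both endpoints by \eqref{E:NBC2}, so the second version of Proposition \ref{psw} (zero boundary values) gives
$$\int_\alpha k_s^2\,ds\le\frac{L^2}{\pi^2}\int_\alpha k_{ss}^2\,ds.$$
Second, $k-\bar k$ has zero mean, so \eqref{eqpsw} gives
$$\int_\alpha (k-\bar k)^2\,ds\le\frac{L^2}{\pi^2}\int_\alpha k_s^2\,ds.$$
Chaining the two yields $\int_\alpha k_{ss}^2\,ds\ge\frac{\pi^4}{L^4}\int_\alpha(k-\bar k)^2\,ds=\frac{\pi^4}{L^5}K_{\mbox{osc}}$. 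Multiplying by $L$ gives $L\int_\alpha k_{ss}^2\,ds\ge\frac{\pi^4}{L^4}K_{\mbox{osc}}$, which is exactly the factor in the statement. This is what "using Proposition \ref{psw} twice in reverse" means.

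The one step needing care is confirming that the curly bracket stays nonnegative for all times, and this is where the hypothesis $K_{\mbox{osc}}[\alpha_0]<\overline{K}$ is used. The bracket is a decreasing function of $K_{\mbox{osc}}$ that vanishes at $\overline{K}$, the positive root of \eqref{E:S1} or \eqref{E:S2} respectively. By the preceding Corollary, $K_{\mbox{osc}}$ is nonincreasing while it stays at most $\overline{K}$, so a standard continuity argument shows $K_{\mbox{osc}}(t)\le K_{\mbox{osc}}[\alpha_0]<\overline{K}$ on $[0,T)$.

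The bracket also involves $L_0$ rather than $L(t)$, so I would note that replacing $L^2$ by $L_0^2$ was already justified in the previous Corollary, since $L$ is nonincreasing by Lemma \ref{T:evlneqns2} (ii). In the case $\omega^2>\frac{1}{10}$ the same argument applies with the modified constant term in the bracket. I do not expect any substantive obstacle.
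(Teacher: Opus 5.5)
Your proposal is correct and follows the same route as the paper: it derives this corollary from the preceding one by ``using Proposition \ref{psw} twice in reverse''. The two applications are to $k_s$, which vanishes at the endpoints, and to $k-\bar k$, which has zero mean, giving $L\int_\alpha k_{ss}^2\,ds\ge \frac{\pi^4}{L^4}K_{\mbox{osc}}$. Your explicit check that the bracket stays nonnegative, via the preserved bound $K_{\mbox{osc}}\le\overline{K}$, is needed to keep the inequality direction, and the paper leaves that point implicit.
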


Let us now define the rescaled time variable $\tilde t$ via $\frac{\mathrm{d} \tilde t}{\mathrm{d} t}= L^{-4}$.  By slight abuse of notation, let us now consider scale invariant quantities as functions of $\tilde t$ where no confusion arises.
\begin{cor} \label{kosc5}
     Under the conditions of Theorem \ref{T:main18}, the oscillation of curvature of the evolving curve satisfies
    $$K_{\mbox{osc}}(\tilde t) \leq C e^{-\delta \tilde t} \mbox{,}$$
    where $\delta = \delta\left( \omega, \lambda L_0^2\right)$ and $C=K_{\mbox{osc}}(0).$
\end{cor}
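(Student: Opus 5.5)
The plan is to convert the differential inequality of Corollary \ref{T:Koscexp} into an ODE in the rescaled time $\tilde t$. Since $\frac{\mathrm{d}\tilde t}{\mathrm{d}t}=L^{-4}$, the chain rule gives $\frac{\mathrm{d}}{\mathrm{d}\tilde t}K_{\mbox{osc}} = L^4\frac{\mathrm{d}}{\mathrm{d}t}K_{\mbox{osc}}$. This cancels the factor $\frac{\pi^4}{L^4}$ in Corollary \ref{T:Koscexp} exactly, so that
$$\frac{\mathrm{d}}{\mathrm{d}\tilde t}K_{\mbox{osc}} \le -\pi^4\, Q\!\left(K_{\mbox{osc}}\right) K_{\mbox{osc}}.$$
Here $Q(x)$ is the bracketed expression $\{\cdots\}$ from Corollary \ref{T:Koscexp}, in whichever of the two cases $\omega^2\le\frac1{10}$ or $\omega^2>\frac1{10}$ applies. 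Note that $Q$ depends only on $\omega$ and $\lambda L_0^2$, because $L^2$ inside the bracket has already been replaced by $L_0^2$ using $L\le L_0$ from Corollary \ref{T:Lbound5}.

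Next I fix the constants. In either case $Q$ has the form $a-bx^{1/2}-cx$ with $b,c>0$. The constant term $a$ is positive by the hypothesis on $\omega^2$ in Theorem \ref{T:main18}: in the first case $a=2-8\omega^2>0$ because $\omega^2\le\frac1{10}$. In the second case one needs $2-8\omega^2-2(10\omega^2-1)\frac{\lambda L_0^2}{\pi^2}>0$, which rearranges to $\omega^2<\frac{\pi^2+\lambda L_0^2}{4\pi^2+10\lambda L_0^2}$. The threshold $\overline K$ is the positive root of \eqref{E:S1} or \eqref{E:S2} respectively.

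The only genuine point is that we need a \emph{strictly} positive lower bound on $Q$, which requires $K_{\mbox{osc}}[\alpha_0]<\overline K$ strictly, as assumed. By the preceding corollary, $K_{\mbox{osc}}$ is nonincreasing while it stays below $\overline K$, so $K_{\mbox{osc}}(\tilde t)\le K_{\mbox{osc}}(0)<\overline K$ for all rescaled times. A simple continuity argument handles this: the set of times where $K_{\mbox{osc}}\le K_{\mbox{osc}}(0)$ is closed, and it is open because on it $Q>0$ and hence $K_{\mbox{osc}}$ decreases. Since $Q$ is decreasing in $x$, we get $Q(K_{\mbox{osc}}(\tilde t))\ge Q(K_{\mbox{osc}}(0))>0$.

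Set $\delta=\pi^4 Q(K_{\mbox{osc}}(0))$; if one wants dependence only on $\omega,\lambda L_0^2$, pick any fixed value below $\overline K$ instead. Then $\frac{\mathrm{d}}{\mathrm{d}\tilde t}K_{\mbox{osc}}\le-\delta K_{\mbox{osc}}$, and Gr\"onwall gives $K_{\mbox{osc}}(\tilde t)\le K_{\mbox{osc}}(0)e^{-\delta\tilde t}$. I expect no step to be a real obstacle beyond carefully checking positivity of $a$ in the second case, which is exactly the condition on $\omega^2$ in Theorem \ref{T:main18}.
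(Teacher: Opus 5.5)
Your proposal is correct and follows essentially the paper's argument: the strict smallness $K_{\mbox{osc}}[\alpha_0]<\overline K$ is preserved, which gives a positive lower bound $\delta$ on the bracket in Corollary \ref{T:Koscexp}; the factor $\pi^4/L^4$ is absorbed by $\mathrm{d}\tilde t/\mathrm{d}t=L^{-4}$; and Gr\"onwall then gives $K_{\mbox{osc}}(\tilde t)\le K_{\mbox{osc}}(0)e^{-\delta\tilde t}$. Your continuity argument and the monotonicity of $Q$ spell out what the paper summarises as ``such smallness is preserved.'' Note that, exactly as in the paper, your $\delta$ also depends on $K_{\mbox{osc}}(0)$, since fixing a value below $\overline K$ requires a correspondingly stronger hypothesis on $\alpha_0$.
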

\begin{proof}
    Suppose that instead of the smallness requirements \eqref{E:S1} and \eqref{E:S2} we have that each of these quantities is equal to $\delta>0$, that is, slightly stronger smallness requirements that can be accommodated in view of the strict inequality in the statement of Theorem \ref{T:main18}.  Then, by the previous argument, such smallness is preserved and Corollary \ref{T:Koscexp} implies in each case
$$\frac{\mathrm{d} }{\mathrm{d} t} K_{\mbox{osc}} \leq -\frac{\pi^4}{L^4} \delta K_{\mbox{osc}} \mbox{.}$$
This means precisely that
$$\frac{\mathrm{d} }{\mathrm{d} \tilde t} K_{\mbox{osc}} \leq -\pi^4 \delta K_{\mbox{osc}}$$
from which exponential decay of $K_{\mbox{osc}}$ in the $\tilde t$ variable follows, redefining the $\delta$ to incorporate $\pi^4$.
\end{proof}

\begin{remark}
 We have chosen not to work with the evolution equations for the rescaled flow directly in this article, since it is more efficient to simply convert rescaled quantities back to the original variables and use the existing evolution equations from Lemma \ref{T:evlneqns1} and so on.  However, for completeness, we note that the rescaled curves are given by $\tilde{\alpha } \left ( \cdot, t \right )=\frac{1}{L(t)}\alpha \left ( \cdot, t \right )$ and the rescaled flow equation is \begin{equation}\label{eq101}
\frac{\partial \tilde{\alpha }}{\partial \tilde{t}}=-\tilde{F}\tilde{\nu }+\tilde{\alpha }\tilde{h}  \end{equation}
where $\tilde{h}=\int _{\tilde{\alpha} }\tilde{k}\,\tilde{F}\,d\tilde{s}=L^4\int _{\alpha }k\,F\,ds$,  $\tilde{t}=\int_{0}^{t}\frac{1}{L^4(\tau)}\,d\tau, \tilde{F}=L^3\,F$, $d\tilde{s}=\frac{1}{L}\,ds$, $\tilde{\lambda }=\lambda \,L^{2} $ and  $\tilde{\nu }=\nu$, as in \cite{Huisken84}, for example.
We further note that the rescaled curves continue to satisfy the boundary conditions \eqref{E:NBC1} and \eqref{E:NBC2}.
\end{remark}

Next, we show that the scale invariant $L^2$ curvature derivative norms are all bounded provided $L \int_{\alpha} k^2 ds$ is bounded.  While we could compute all the evolution equations in the rescaled coordinates, it is easier to convert back into the original unrescaled quantities and use the existing evolution equations.  

\begin{cor} \label{T:rescaled}
If, under the flow \eqref{eq101}, $\int_{\tilde{\alpha}} \tilde{k}^2 d\tilde{s}$ is bounded on a finite time interval, then all the rescaled curvature derivatives in $L^2$ are bounded on any finite time interval; more precisely
$$\left ( L^{2\ell+1}\int _{\alpha }k_{s^\ell}^{2}\,ds \right )\bigg{|}_{\tilde t}\leq \left ( L^{2\ell+1}\int _{\alpha }k_{s^\ell}^{2}\,ds \right )\bigg{|}_{\tilde t=0}+\tilde{C}_{\ell}^{2\ell+5}\,\tilde t$$
\end{cor}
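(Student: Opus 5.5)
The plan is to mimic the proof of Proposition~\ref{bd}, but to use the scale-invariant quantity $Q_\ell := L^{2\ell+1}\int_\alpha k_{s^\ell}^2\,ds$ and differentiate in the rescaled time $\tilde t$, where $\frac{\mathrm{d}}{\mathrm{d}\tilde t} = L^4\frac{\mathrm{d}}{\mathrm{d}t}$. Note first that $L\int_\alpha k^2\,ds = \int_{\tilde\alpha}\tilde k^2\,d\tilde s$, so the hypothesis is exactly a bound $L\int_\alpha k^2\,ds\le \tilde C_0$ on the interval. By the product rule,
\begin{equation*}
\frac{\mathrm{d}}{\mathrm{d}\tilde t} Q_\ell = L^{2\ell+5}\frac{\mathrm{d}}{\mathrm{d}t}\int_\alpha k_{s^\ell}^2\,ds + (2\ell+1)L^{2\ell+4}\frac{\mathrm{d}L}{\mathrm{d}t}\int_\alpha k_{s^\ell}^2\,ds .
\end{equation*}
By Lemma~\ref{T:evlneqns2} (ii) we have $\frac{\mathrm{d}L}{\mathrm{d}t}\le 0$ for $\lambda>0$, so the second term is nonpositive and can be discarded.

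For the first term I will use Lemma~\ref{T:evlneqns2} (v), drop the good term $-\lambda\int k_{s^{\ell+1}}^2$, and estimate the two $P_4$ terms using the interpolation Proposition~\ref{p}. This time I keep the first, scale-honest form, with its explicit $L$ powers, rather than the $\varepsilon$-form used in \eqref{v1}–\eqref{v2}. For $\int P_4^{2\ell+2}(k)$ take $\ell+2$ as the top order, $m=2\ell+2$, $n=4$. This gives $p=(2\ell+3)/(\ell+2)<2$ and
\begin{equation*}
\int_\alpha|P_4^{2\ell+2}|\,ds\le c\,L^{-2\ell-5}\|k\|_{0,2}^{4-p}\|k\|_{\ell+2,2}^{p}.
\end{equation*}
Every norm here is scale invariant, and $\|k\|_{0,2}^2 = L\int k^2\le \tilde C_0$. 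Young's inequality then yields
\begin{equation*}
L^{2\ell+5}\int_\alpha|P_4^{2\ell+2}|\,ds\le \varepsilon\|k\|_{\ell+2,2}^2 + c(\varepsilon)\tilde C_0^{(4-p)/(2-p)}.
\end{equation*}

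The $\lambda$ term needs more care, and this is where I expect the main obstacle. Proposition~\ref{p} gives $\lambda\int|P_4^{2\ell}|\le c\lambda L^{-2\ell-3}\|k\|_{0,2}^{4-p'}\|k\|_{\ell+2,2}^{p'}$, so after multiplying by $L^{2\ell+5}$ the weight is $\lambda L^2$. The point is that $\lambda L^2\le\lambda L_0^2$ because $L$ decreases, so this is again a scale-invariant bound with a constant depending on $\lambda L_0^2$, and it is absorbed by Young's inequality in the same way. To finish this step, the full norm $\|k\|_{\ell+2,2}^2$ must be bounded by a multiple of $L^{2\ell+5}\int k_{s^{\ell+2}}^2 + \tilde C_0$. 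I will do this by controlling the intermediate scale-invariant norms with Lemma~\ref{k8} or Proposition~\ref{p}, as in \cite{DKS02}.

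After absorbing the $\varepsilon$ terms into $-L^{2\ell+5}\int k_{s^{\ell+2}}^2$, I get $\frac{\mathrm{d}}{\mathrm{d}\tilde t}Q_\ell\le \tilde C_\ell^{2\ell+5}$ with $\tilde C_\ell$ depending on $\ell$, $\tilde C_0$ and $\lambda L_0^2$; the exponent is chosen to match the form in Proposition~\ref{bd}. Integrating in $\tilde t$ over the finite interval then gives the stated linear bound.
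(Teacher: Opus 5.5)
Your proposal is correct and follows the paper's route. You pass to $\tilde t$ via $\frac{\mathrm{d}}{\mathrm{d}\tilde t}=L^4\frac{\mathrm{d}}{\mathrm{d}t}$, discard the $(2\ell+1)L^{2\ell+4}\frac{\mathrm{d}L}{\mathrm{d}t}\int_\alpha k_{s^\ell}^2\,ds$ term using Lemma~\ref{T:evlneqns2}~(ii), bound $L^{2\ell+5}\frac{\mathrm{d}}{\mathrm{d}t}\int_\alpha k_{s^\ell}^2\,ds$ through Lemma~\ref{T:evlneqns2}~(v) and Proposition~\ref{p}, and integrate; the only difference is that the paper reuses its unrescaled $\varepsilon$-form estimate \eqref{kk22} and converts, leaving factors $L^{\frac{4\ell+8}{3}}$ and $L^2$ that are implicitly bounded by $L\le L_0$, whereas you apply the scale-invariant form directly, which needs the extra step $\|k\|_{\ell+2,2}^2\le C\big(\|k_{s^{\ell+2}}\|_2^2+\|k\|_{0,2}^2\big)$ (valid, by log-convexity from Lemma~\ref{k8}) and yields constants depending only on $\tilde C_0$ and $\lambda L_0^2$.
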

\begin{proof}
We calculate
\begin{align*}
&\frac{\mathrm{d} }{\mathrm{d} \tilde{t}}\int _{\alpha }\tilde{k}_{\tilde{s}^\ell}^{2}\,d\tilde{s}
%=\frac{\mathrm{d} }{\mathrm{d} \tilde{t}}\left ( L^{2\ell+1}\int _{\alpha }k_{s^\ell}^{2}\,ds \right )
=\frac{\mathrm{d} }{\mathrm{d} t}\left ( L^{2\ell+1}\int _{\alpha }k_{s^\ell}^{2}\,ds \right )\frac{\mathrm{d} t}{\mathrm{d} \tilde{t}} 
=L^4\bigg [\left ( 2\ell+1 \right )L^{2\ell}\,\frac{\mathrm{d} L}{\mathrm{d} t} \int _{\alpha }k_{s^\ell}^{2}\,ds +L^{2\ell+1}\frac{\mathrm{d} }{\mathrm{d} t}\int _{\alpha }k_{s^\ell}^{2}\,ds\bigg ]
 \\
 &\leq L^{2\ell+5}\frac{\mathrm{d} }{\mathrm{d} t}\int _{\alpha }k_{s^\ell}^{2}\,ds
\leq L^{2\ell+5}\bigg [ c(\varepsilon)\left (\int _{\alpha }k^{2}\,ds  \right )^{\frac{2\ell+7}{3}} +c \left (\int _{\alpha }k^{2}\,ds  \right )^{2\ell+3} +c\left ( \varepsilon  \right )\left (\int _{\alpha }k^{2}\,ds  \right )^{2\ell+5} \bigg ]\\
&\leq c(\varepsilon)L^{\frac{4\ell+8}{3}}\left (\int _{\tilde{\alpha} }\tilde{k}^{2}\,d\tilde{s}  \right )^{\frac{2\ell+7}{3}} +cL^2 \left (\int _{\tilde{\alpha} }\tilde{k}^{2}\,d\tilde{s}  \right )^{2\ell+3} +c\left ( \varepsilon  \right )\left (\int _{\tilde{\alpha} }\tilde{k}^{2}\,d\tilde{s}  \right )^{2\ell+5}.
\end{align*}
In view of the boundedness of $\int_{\tilde{\alpha}} \tilde k^2 d\tilde s$, the result follows by integration over the finite time interval.
\end{proof}
%\begin{remark}
%Corollary \ref{T:rescaled} shows more generally that while $\int_{\tilde{\alpha}} \tilde k^2 d\tilde s$ is bounded the rescaled curvature derivatives can each grow at most linearly with respect to $\tilde t$.  This is sufficient to obtain exponential decay in the argument to follow. {\color{red} Remove it?}
%\end{remark}

Now we obtain exponential decay in the new time variable of all the scale invariant $L^2$ curvature derivatives.  The first of these is a special case that we state separately.

\begin{cor}
 Under the flow \eqref{eq101}, for all $\tilde t >0$,
 \begin{equation}\label{exc175}
L^3\left\| k_{s} \right\|_2^2 \leq \tilde{C}\, e^{-\frac{\delta\, \tilde t}{4}} 
\end{equation}    
\end{cor}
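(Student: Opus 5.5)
The plan is to interpolate between the exponentially decaying quantity $K_{\mbox{osc}}$ (Corollary \ref{kosc5}) and the at-most-linearly growing scale-invariant norm $L^{5}\left\|k_{ss}\right\|_2^2$ (Corollary \ref{T:rescaled} with $\ell=2$). The exponent $\frac{\delta}{4}$ is not sharp. It should come out of a Cauchy--Schwarz (or Lemma \ref{k8}--type) splitting, which halves the decay rate, with the rest absorbed when the linear growth is traded for a slightly smaller exponent.

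\textbf{Step 1: an interpolation identity.} Since $k_s(\pm1,t)=0$ by \eqref{E:NBC2}, and $\bar k$ is constant in $s$, integration by parts gives
$$\int_\alpha k_s^2\,ds=-\int_\alpha (k-\bar k)\,k_{ss}\,ds\le \left(\int_\alpha (k-\bar k)^2ds\right)^{\frac12}\left(\int_\alpha k_{ss}^2ds\right)^{\frac12}.$$
Multiplying by $L^3=L^{\frac12}\cdot L^{\frac52}$ yields
$$L^3\left\|k_s\right\|_2^2\le \sqrt{K_{\mbox{osc}}}\,\Bigl(L^5\int_\alpha k_{ss}^2ds\Bigr)^{\frac12}.$$
This is the scale-invariant form of Lemma \ref{k8} with $n=2$, with $k$ replaced by $k-\bar k$.

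\textbf{Step 2: controlling the second factor.} By Corollary \ref{kosc5}, $K_{\mbox{osc}}$ is bounded. The remark following it then shows $\int_{\tilde\alpha}\tilde k^2d\tilde s=K_{\mbox{osc}}+4\pi^2\omega^2$ is uniformly bounded in $\tilde t$. Corollary \ref{T:rescaled} with $\ell=2$ therefore gives $L^5\int_\alpha k_{ss}^2ds\le C_1+C_2\tilde t$ for all $\tilde t$. The constants are uniform because the bound on $\int\tilde k^2 d\tilde s$ is uniform, even though the corollary is phrased for finite intervals.

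\textbf{Step 3: combining.} Inserting $K_{\mbox{osc}}\le Ce^{-\delta\tilde t}$ gives
$$L^3\left\|k_s\right\|_2^2\le C e^{-\frac{\delta\tilde t}{2}}(C_1+C_2\tilde t)^{\frac12}.$$
Since $(C_1+C_2\tilde t)^{\frac12}e^{-\frac{\delta\tilde t}{4}}$ is bounded on $[0,\infty)$ by a constant depending on $\delta,C_1,C_2$, we obtain $L^3\|k_s\|_2^2\le \tilde C e^{-\frac{\delta\tilde t}{4}}$. This is \eqref{exc175}.

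\textbf{Main obstacle.} The delicate point is Step 2. The constants in Corollary \ref{T:rescaled} came from the unrescaled estimate \eqref{kk22}, whose powers of $L$ carry the factor $\lambda$ (lower-order terms with $L^{\frac{4\ell+8}{3}}$ and $L^2$). We must check that these stay bounded, using $L\le L_0$ from Corollary \ref{T:Lbound5}, so that the growth in $\tilde t$ is genuinely at most linear with $\tilde t$-independent constants. If that bookkeeping fails, I would instead differentiate $L^3\int k_s^2ds$ directly in $\tilde t$ using Lemma \ref{T:evlneqns2}(v) with $\ell=1$. The $-\int k_{s^3}^2$ term would then give a good term dominating $L^3\int k_s^2$ via Proposition \ref{psw}, with errors controlled by powers of $K_{\mbox{osc}}$ through Proposition \ref{p}, and a Gronwall argument would give exponential decay.
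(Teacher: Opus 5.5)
Your proposal is correct and is essentially the paper's own argument. You integrate by parts using $k_s(\pm1,t)=0$, apply Cauchy--Schwarz to get $L^3\|k_s\|_2^2\le \sqrt{K_{\mbox{osc}}}\,\bigl(L^5\int_\alpha k_{ss}^2\,ds\bigr)^{1/2}$, and combine the exponential decay from Corollary \ref{kosc5} with the linear-in-$\tilde t$ bound from Corollary \ref{T:rescaled} (with $\ell=2$), absorbing the $\sqrt{\tilde t}$ growth by lowering the rate from $\delta/2$ to $\delta/4$. Your bookkeeping worry resolves as you suspected: the leftover factors $L^{(4\ell+8)/3}$ and $L^2$ in Corollary \ref{T:rescaled} are bounded by powers of $L_0$ because $L$ is nonincreasing, which the paper uses implicitly, so the fallback Gronwall argument is not needed.
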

\begin{proof}Using integration by
parts, Lemma \ref{T:BCs}, and the H\"{o}lder inequality yields
 \begin{equation*}
L^3 \int_{\alpha } k_s^2 \, ds = -L^3 \int_{\alpha } \left( k - \bar{k} \right) k_{ss} \, ds 
\leq \left( L^5 \int_{\alpha } k_{ss}^2 \, ds \right)^{\frac{1}{2}} \left( L \int_{\alpha } \left( k - \bar{k} \right)^2  ds \right)^{\frac{1}{2}}.
\end{equation*} 
Using Corollaries \ref{kosc5} and \ref{T:rescaled}, we obtain
\begin{equation*}
L^3 \int_{\alpha } k_s^2 \, ds \leq  \left ( \tilde{C_1}+\tilde{C_2^9}\,\tilde{t} \right )^{\frac{1}{2}}C^{\frac{1}{2}}\,e^{-\frac{\delta }{2}\tilde{t}} 
%= \left ( \left ( \tilde{C_1}+\tilde{C_2^9}\,\tilde{t} \right )^{\frac{1}{2}}C^{\frac{1}{2}}\,e^{-\frac{\delta }{4}\tilde{t}} \right )e^{-\frac{\delta }{4}\tilde{t}},
\leq \tilde{C}\, e^{-\frac{\delta }{4}\tilde{t}},
\end{equation*}
where $\tilde{C}_1:=\left ( L^{5}\int _{\alpha }k_{ss}^{2}\,ds \right )\bigg{|}_{\tilde t=0}$.
%Since $\left ( \tilde{C_1}+\tilde{C_2}\tilde{t} \right )^{\frac{1}{2}}C^{\frac{1}{2}}\,e^{-\frac{\delta }{4}\tilde{t}}\leq \tilde{C} $, for a constant $\tilde{C}>0$. Thus,  $$L^3 \int_{\alpha } k_s^2 \, ds\leq \tilde{C}\, e^{-\frac{\delta }{4}\tilde{t}},$$
This completes the proof.
\end{proof}

The higher curvature derivatives can now be handled by induction.

\begin{cor}\label{cor3.14}
 Under the flow \eqref{eq101}, for all $\ell \in \mathbb{N}$, there exist corresponding $\delta_\ell >0$ and $\tilde C_\ell >0$ such that
 \begin{equation}
L^{2\ell+1}\left\| k_{s^\ell} \right\|_2^2 \leq \tilde{C}_{\ell}\, e^{-\delta_{\ell} \tilde t} \mbox{.}    \end{equation}    
\end{cor}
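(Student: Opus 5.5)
We argue by induction on $\ell$, with the case $\ell=1$ given by \eqref{exc175}. The idea is the same as in that case: integrate by parts to write the $\ell$-th scale-invariant norm as a product of a lower-order norm, which decays exponentially, and a higher-order norm, which grows at most linearly by Corollary \ref{T:rescaled}.

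For the inductive step, assume the estimate holds for some $\ell-1\geq 1$. Integrate by parts once. By Lemma \ref{T:BCs}, the boundary term $k_{s^{\ell}}k_{s^{\ell-1}}\big|_{\partial\alpha}$ vanishes, because one of $\ell$ and $\ell-1$ is odd. Applying the H\"older inequality then gives
\begin{equation*}
L^{2\ell+1}\int_\alpha k_{s^\ell}^2\,ds = -L^{2\ell+1}\int_\alpha k_{s^{\ell-1}}k_{s^{\ell+1}}\,ds \leq \left(L^{2\ell-1}\int_\alpha k_{s^{\ell-1}}^2\,ds\right)^{\frac12}\left(L^{2\ell+3}\int_\alpha k_{s^{\ell+1}}^2\,ds\right)^{\frac12}.
\end{equation*}
The powers of $L$ split correctly because $2\ell+1 = \frac12\left[(2\ell-1)+(2\ell+3)\right]$. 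Both factors are therefore scale invariant and can be read as quantities along the rescaled flow \eqref{eq101}.

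The first factor is bounded by $\tilde C_{\ell-1}^{1/2}e^{-\delta_{\ell-1}\tilde t/2}$ by the inductive hypothesis. For the second factor we use Corollary \ref{T:rescaled} with $\ell+1$ in place of $\ell$. Its hypothesis holds because $\int_{\tilde\alpha}\tilde k^2\,d\tilde s = L\int_\alpha k^2\,ds = K_{\mbox{osc}}+4\pi^2\omega^2$ is uniformly bounded by the preserved smallness of $K_{\mbox{osc}}$ (Corollary \ref{kosc5}). The corollary then bounds the second factor by $\left(a_{\ell+1}+b_{\ell+1}\tilde t\right)^{1/2}$, with constants depending on the initial data. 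Since $(a+b\tilde t)^{1/2}e^{-\delta_{\ell-1}\tilde t/4}$ is bounded on $[0,\infty)$, the product is at most $\tilde C_\ell e^{-\delta_{\ell}\tilde t}$ with $\delta_\ell=\delta_{\ell-1}/4$. In closed form, $\delta_\ell=\delta/4^{\ell}$.

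The main obstacle is making sure the constant in Corollary \ref{T:rescaled} is uniform, so that the bound is genuinely linear in $\tilde t$ for all $\tilde t$ and not only on each finite interval. Looking at its proof, the differential inequality has a right-hand side bounded by $c\,L^{p}\,(\text{bounded})$ with $p\geq0$. Because $L\leq L_0$ by Lemma \ref{T:evlneqns2} (ii), this right-hand side is bounded by a constant independent of $\tilde t$. The linear growth bound is therefore global, which is exactly what the decay argument needs. Alternatively one could interpolate using Lemma \ref{k8}, but the single integration by parts above already suffices.
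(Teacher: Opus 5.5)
Your proposal is correct and follows essentially the same route as the paper: induction on $\ell$ from the base case \eqref{exc175}, one integration by parts with boundary terms killed by Lemma \ref{T:BCs}, then H\"older, with the inductive hypothesis controlling the lower-order factor and Corollary \ref{T:rescaled} controlling the higher-order one. Your version is slightly more careful than the paper's: the paper treats the higher-order factor as uniformly bounded and takes $\delta_{\ell+1}=\delta_\ell/2$, whereas you correctly absorb its linear growth in $\tilde t$ (giving $\delta_\ell=\delta_{\ell-1}/4$) and check that this growth bound holds for all $\tilde t$ because $L\le L_0$.
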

\begin{proof}
 The result follows from a standard induction argument, with the base case $\ell = 1$ given in \eqref{exc175}. So assume that
 $$L^{2\ell+1}\int _{\alpha }k_{s^\ell}^{2}\,ds\leq \tilde C_\ell e^{-\delta_\ell \tilde{t}}$$ for some $\tilde{C_\ell}>0$.  Integrating by parts and using the property at the boundaries, Lemma \ref{T:BCs},   we estimate using the H\"{o}lder inequality
 \begin{equation*}
L^{2\ell+3}\int _{\alpha }k_{s^{\ell+1}}^2\,ds
= -L^{2\ell+3}\int _{\alpha }k_{s^{\ell+2}}\,k_{s^{\ell}}\,ds 
\leq \left( L^{2\ell+1}\int _{\alpha }k_{s^{\ell}}^2\,ds \right)^{\frac{1}{2}}
\left( L^{2} \int _{\alpha }k_{s^{\ell+2}}^2\,ds \right)^{\frac{1}{2}}
\end{equation*} 
The inductive assumption and Corollary \ref{T:rescaled}, 
imply
$$L^{2\ell+3}\int _{\alpha }k_{s^{\ell+1}}^2\,ds\leq C_{\ell+2}\, \tilde{C_\ell}^\frac{1}{2}e^{-\frac{\delta_\ell}{2} \tilde{t}}.$$
The proof is concluded by setting $\tilde C_{\ell+1} := C_{\ell+2} \tilde C_\ell^{\frac{1}{2}}$ and $\delta_{\ell+1} := \frac{\delta_\ell}{2}$.
\end{proof}

\begin{proof}[Completion of the proof of Theorem \ref{T:main18}]
The exponential decay in $L^2$ with respect to the time variable $\tilde t$ implies pointwise exponential decay of the rescaled quantities via Proposition \ref{psw}.  In particular, pointwise exponential decay of $K_{\mbox{osc}}$ implies convergence of the rescaled curves in the rescaled time variable to a circular arc of length $L_0$ with centre at the tip of the cone, compatible with the boundary conditions \eqref{E:NBC1} and \eqref{E:NBC2}.  To establish exponential decay of all curvature derivatives with the same exponent one may use a standard linearisation argument, noting that, by continuity, there will exist a time beyond which the solution remains strictly convex and thus parametrisation by the support function may be used (notwithstanding a more general procedure using the position vector is also possible).  Moreover, again in view of the exponential convergence, one may obtain a time $\tilde t_0$ at which the rescaled curve is $C^{4,\gamma}$-close to the limiting curve $\tilde \alpha_\infty$, as required to apply the linearisation argument.  Finally, convergence of the rescaled solution in the $C^\infty$-topology with the rescaled time variable follows by standard arguments, using control on the rescaled speed to control parameter derivatives of the embedding map.  We refer the reader to \cite{GM24} for an outline of this procedure, which is similar, although there not in rescaled variables. This completes the proof of the Theorem.
\end{proof}

\section{Length-constrained curve diffusion flow \texorpdfstring{$F= -k_{ss} +\lambda_1(t)$}{lambda(t) } } \label{S:lf}
In this section, we choose $\lambda$ as a function of time that ensures the length of the evolving curve is fixed. The corresponding flow equation is 
\begin{equation}\label{eq113}
\frac{\partial \alpha }{\partial t}=\bigg(  k_{ss}-\lambda_1(t) \bigg)\,\nu 
\end{equation}
with \begin{equation}\label{h1}
\lambda_1\left ( t \right ):=\displaystyle{\frac{-\int _{\alpha }k_{s}^{2}\,ds}{\int _{\alpha }k\,ds}=\frac{-\int _{\alpha }k_{s}^{2}\,ds}{ 2\omega \pi }}
\end{equation}
ensures $L[\alpha_t] = L[\alpha_0] =: L_0$ for all $t$, as can be checked by direct calculation (Lemma \ref{T:evlneqns48}).  This is the same flow speed as used in the case of closed planar curves in \cite{MWY}. In this case, we have all odd curvature derivatives equal to zero on the boundary (i.e., $ k_{s^{2\ell+1}}=0$ on the boundary). As such, by the same interpolation argument using integration by parts but now with boundary terms that are equal to zero, we have the following estimate. 

\begin{lem}\label{c2}For each $n \in \mathbb{N} $
$$\left | \lambda_1 (t)\right |\leq \frac{1}{2\pi\omega } \left( \int_{\alpha } k^2 \, ds \right)^{1 - \frac{1}{n}} \left( \int_{\alpha } k_{s^n}^2 \, ds \right)^{\frac{1}{n}}.$$ 
\end{lem}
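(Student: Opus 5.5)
We need to bound $\int k_s^2\,ds$ by $(\int k^2)^{1-1/n}(\int k_{s^n}^2)^{1/n}$. Since $|\lambda_1(t)| = \frac{1}{2\pi\omega}\int_\alpha k_s^2\,ds$ by \eqref{h1}, it suffices to prove that for every $n\in\mathbb{N}$
\begin{equation*}
\int_\alpha k_s^2\,ds \leq \left(\int_\alpha k^2\,ds\right)^{1-\frac{1}{n}}\left(\int_\alpha k_{s^n}^2\,ds\right)^{\frac{1}{n}}.
\end{equation*}
Here $\omega>0$ because the cone angle is positive.

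\textbf{Base case $n=1$.} This is trivial.

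\textbf{Lemma \ref{k8} with $n=2$.} This gives
\[
\int_\alpha k_s^2\,ds\le \left(\int_\alpha k^2\,ds\right)^{\frac12}\left(\int_\alpha k_{ss}^2\,ds\right)^{\frac12}.
\]
It comes from writing $\int_\alpha k_s^2\,ds = k\,k_s\big|_{\partial\alpha} - \int_\alpha k\,k_{ss}\,ds$, where the boundary term vanishes by \eqref{E:NBC2}.

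\textbf{General $n\ge 2$: chain the inequalities.} Write $a_j=\int_\alpha k_{s^j}^2\,ds$. Lemma \ref{k8} applied with index $j+1$ gives $a_j\le a_0^{\frac{1}{j+1}}a_{j+1}^{\frac{j}{j+1}}$. The key intermediate statement, proved by induction on $m\ge1$, is
\[
a_1\le a_0^{1-\frac1m}\,a_m^{\frac1m}.
\]
Assume it holds for $m$. Substitute $a_m\le a_0^{\frac{1}{m+1}}a_{m+1}^{\frac{m}{m+1}}$ into it. The exponent of $a_0$ becomes
\[
1-\frac1m+\frac{1}{m(m+1)} = 1-\frac{1}{m+1},
\]
and the exponent of $a_{m+1}$ becomes $\frac1m\cdot\frac{m}{m+1}=\frac{1}{m+1}$. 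This is exactly the claim for $m+1$.

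Alternatively, one can redo the integration-by-parts argument directly. The step $\int k_{s^i}^2 = -\int k_{s^{i-1}}k_{s^{i+1}}$ holds because one of the orders $i,\ i-1$ is odd, so the boundary term vanishes by Lemma \ref{T:BCs}. This yields log-convexity $a_i^2\le a_{i-1}a_{i+1}$, and log-convexity of the sequence $(a_j)$ directly gives $a_1\le a_0^{1-1/n}a_n^{1/n}$.

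\textbf{Conclusion.} Multiplying by $\frac{1}{2\pi\omega}$ finishes the proof.

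\textbf{Expected obstacle.} The only delicate point is the vanishing of the boundary terms. These need $k_{s^{2\ell+1}}(\pm1,t)=0$ for the flow \eqref{eq113}, which is asserted for speed (2) in Lemma \ref{T:BCs}. With that in hand, the remaining work is just the exponent bookkeeping above.
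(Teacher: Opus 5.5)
Your proof is correct and follows essentially the paper's route. The paper cites the same integration-by-parts interpolation of Lemma \ref{k8}, with boundary terms killed by Lemma \ref{T:BCs}, to get $\int_\alpha k_s^2\,ds\le (\int_\alpha k^2\,ds)^{1-1/n}(\int_\alpha k_{s^n}^2\,ds)^{1/n}$, and hence the bound on $|\lambda_1(t)|=\frac{1}{2\pi\omega}\int_\alpha k_s^2\,ds$; your chaining of Lemma \ref{k8} (equivalently, the log-convexity of $\int_\alpha k_{s^j}^2\,ds$) just writes out the exponent bookkeeping that the paper leaves implicit.
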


Again, short-time existence of a solution to the flow equation \eqref{eq113} is well known, with a classical fixed point argument used to handle the global term $\lambda_1(t)$.
\begin{lem} \label{T:evlneqns48}
Under the flow \eqref{eq113}, while a solution exists, the length is constant  
$L\left [ \alpha_{t}  \right ]=L_{0}.$
\end{lem}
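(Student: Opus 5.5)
The plan is to compute $\frac{\mathrm{d}}{\mathrm{d}t}L$ directly from Lemma \ref{eq of general F} (ii) with the speed $F = -k_{ss} + \lambda_1(t)$, and show it vanishes by the choice \eqref{h1}. Since $L$ is differentiable in $t$ while the smooth solution exists, integrating $\frac{\mathrm{d}}{\mathrm{d}t}L \equiv 0$ from $0$ to $t$ then gives $L[\alpha_t] = L[\alpha_0] = L_0$.

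First, by Lemma \ref{eq of general F} (ii),
$$\frac{\mathrm{d}}{\mathrm{d}t}L = -\int_\alpha k\,F\,ds = \int_\alpha k\,k_{ss}\,ds - \lambda_1(t)\int_\alpha k\,ds.$$
For the first term I integrate by parts:
$$\int_\alpha k\,k_{ss}\,ds = k\,k_s\Big|_{\partial\alpha} - \int_\alpha k_s^2\,ds.$$
The boundary term vanishes by the no-curvature-flux condition \eqref{E:NBC2} (equivalently the case $\ell=0$ of Lemma \ref{T:BCs}), which is preserved under the flow by the definition of generalised Neumann boundary conditions.

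For the second term, I use Corollary \ref{T:omega}, or simply that the rotation number is fixed by the cone geometry, to write $\int_\alpha k\,ds = 2\pi\omega = \Theta_1 - \Theta_2 \neq 0$. This is exactly what makes $\lambda_1$ in \eqref{h1} well defined. Substituting \eqref{h1} then gives
$$\frac{\mathrm{d}}{\mathrm{d}t}L = -\int_\alpha k_s^2\,ds + \int_\alpha k_s^2\,ds = 0.$$

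There is no serious obstacle here. The only points needing care are:
\begin{itemize}
\item the vanishing of the boundary term, which relies on \eqref{E:NBC2} holding for all $t$ along the flow;
\item the nonvanishing of $\omega$, guaranteed since $\Theta_2 < \Theta_1$;
\item the one-sided interpretation of the derivatives at the endpoints.
\end{itemize}
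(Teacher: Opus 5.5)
Your proposal is correct and follows essentially the paper's proof: differentiate $L$ using Lemma \ref{eq of general F} (ii), integrate $\int_\alpha k\,k_{ss}\,ds$ by parts with the boundary term killed by $k_s(\pm 1,t)=0$, and cancel the resulting $-\int_\alpha k_s^2\,ds$ against $-\lambda_1(t)\int_\alpha k\,ds$ via \eqref{h1}. The paper does this in one line and leaves the integration by parts implicit. Your explicit treatment of the boundary term, and your remark that $\int_\alpha k\,ds = 2\pi\omega \neq 0$, simply fill in details the paper omits.
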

\begin{proof}
By differentiating the length in time, we get
$$\frac{\mathrm{d} }{\mathrm{d} t}L\left [ \alpha_{t}  \right ]=- \int_{\alpha }kF~ds=-\int_{\alpha }k_{s}^{2}\,ds-\lambda_1 (t)\int_{\alpha }k~ds = 0,$$ 
where in the last step we have used \eqref{h1}.  The result follows.      \end{proof}
\begin{lem} \label{h32}
Under the flow \eqref{eq113}, we have 
\begin{enumerate}
 \renewcommand{\labelenumi}{(\roman{enumi})}
 \item $\displaystyle{\frac{\mathrm{d} }{\mathrm{d} t}A\left [ \alpha_t \right ]=-\lambda_1 (t) L_0};$
 \item $\displaystyle{\frac{\mathrm{d} }{\mathrm{d} t} \int _{\alpha } k^2 \, ds = -2 \int _{\alpha } k_{ss}^2 \, ds 
+ 3 \int _{\alpha } k^2 k_s^2 \, ds + \lambda_1(t) \int _{\alpha } k^3 \, ds};$
\item $\displaystyle{\frac{\mathrm{d} }{\mathrm{d} t} \int _{\alpha } k_s^2 \, ds = -2 \int _{\alpha } k_{s^3}^2 \, ds 
+ 2 \int _{\alpha } k^2 k_{ss}^2 \, ds + \frac{1}{3} \int _{\alpha } k_s^4 \, ds + 5\lambda_1(t) \int _{\alpha } k k_s^2 \, ds};$
\item $\displaystyle{\frac{\mathrm{d} }{\mathrm{d} t} K_{\mbox{osc}} = -2L_0\int _{\alpha }k_{ss}^2 \, ds + 3L_0 \int _{\alpha } (k - \overline{k})^2 k_s^2 \, ds 
+ 6L_0 \overline{k} \int _{\alpha } (k - \overline{k}) k_s^2 \, ds}$

\hspace*{\fill}$\displaystyle{+2\overline{k}^2 L_0 \int k_s^2 \, ds + L_0 \lambda_1(t) \left[ \int _{\alpha } (k - \overline{k})^3 \, ds 
+ 3\overline{k} \int _{\alpha } (k - \overline{k})^2 \, ds \right]};$
\item $\displaystyle{\frac{\mathrm{d} }{\mathrm{d} t}\frac{1}{2}\int _{\alpha }k_{s^{\ell}}^{2}\,ds=-\int _{\alpha }k_{s^{\ell+2}}^{2}\,ds+\int _{\alpha }P_{4}^{2\ell+2}\left ( k \right )\,ds+\lambda_1 \left ( t \right )\int _{\alpha }P_{3}^{2\ell}\left ( k \right )\,ds.}$\\
\end{enumerate}
\end{lem}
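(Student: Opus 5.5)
All five identities follow from Lemma \ref{eq of general F} with $F=-k_{ss}+\lambda_1(t)$. Three facts are used throughout: $L=L_0$ is constant (Lemma \ref{T:evlneqns48}), $\lambda_1$ depends only on $t$ and so passes through spatial integrals, and every boundary term produced by integrating by parts contains an odd derivative $k_{s^{2j+1}}$, which vanishes at $u=\pm1$ by \eqref{E:NBC2} and Lemma \ref{T:BCs}.

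For (i), Lemma \ref{eq of general F} (iii) gives
$$\frac{\mathrm{d}}{\mathrm{d}t}A=\int_\alpha k_{ss}\,ds-\lambda_1L_0=k_s\big|_{\partial\alpha}-\lambda_1L_0=-\lambda_1L_0 .$$

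For (ii), Lemma \ref{eq of general F} (v) gives
$$2\int_\alpha\Big(k_{ss}+\tfrac12k^3\Big)(-k_{ss}+\lambda_1)\,ds .$$
We expand this product term by term.
\begin{itemize}
\item $\int_\alpha k_{ss}\,ds$ is a boundary term in $k_s$, so it vanishes.
\item Integrating by parts, $-\int_\alpha k^3k_{ss}\,ds=3\int_\alpha k^2k_s^2\,ds$.
\item The remaining terms are $-2\int_\alpha k_{ss}^2\,ds$ and $\lambda_1\int_\alpha k^3\,ds$.
\end{itemize}

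For (iii), we expand Lemma \ref{eq of general F} (vii) in the same way.
\begin{itemize}
\item $2\int_\alpha k_{s^4}k_{ss}\,ds=-2\int_\alpha k_{s^3}^2\,ds$, because the boundary term carries $k_{s^3}=0$.
\item The product $2\int_\alpha k^2k_{ss}^2\,ds$ appears directly.
\item Writing $k_s^2k_{ss}=\tfrac13(k_s^3)_s$ and using $k_s=0$ on the boundary, $-\int_\alpha k\,k_s^2k_{ss}\,ds=\tfrac13\int_\alpha k_s^4\,ds$.
\item Among the $\lambda_1$ terms, $-2\lambda_1\int_\alpha k_{s^4}\,ds=0$.
\item Also $-2\lambda_1\int_\alpha k^2k_{ss}\,ds=4\lambda_1\int_\alpha k\,k_s^2\,ds$, which together with $\lambda_1\int_\alpha k\,k_s^2\,ds$ gives the coefficient $5$.
\end{itemize}

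For (iv), since $L_0$ is constant we have $\frac{\mathrm{d}}{\mathrm{d}t}K_{\mbox{osc}}=L_0\frac{\mathrm{d}}{\mathrm{d}t}\int_\alpha(k-\bar k)^2\,ds$, so we use Lemma \ref{eq of general F} (vi). The non-$\lambda_1$ part reduces, exactly as in (ii), to
$$-2\int_\alpha k_{ss}^2\,ds+3\int_\alpha k^2k_s^2\,ds-\bar k^2\int_\alpha k_s^2\,ds .$$
Put $u=k-\bar k$, so that $\int_\alpha u\,ds=0$. Then
$$3k^2=3u^2+6\bar ku+3\bar k^2,$$
and the $\bar k^2$ terms combine to $2\bar k^2\int_\alpha k_s^2\,ds$. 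In the $\lambda_1$ part, $2\lambda_1\int_\alpha k_{ss}\,ds=0$. For the other $\lambda_1$ term,
$$k(k^2-\bar k^2)=u^3+3\bar ku^2+2\bar k^2u,$$
and the last term integrates to zero. This gives exactly the stated bracket.

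For (v), we follow \cite{DKS02} using Lemma \ref{eq of general F} (i) and (iv).

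The $-k_{ss}$ part of $F$ gives the standard contribution $-\int_\alpha k_{s^{\ell+2}}^2\,ds+\int_\alpha P_4^{2\ell+2}(k)\,ds$. This requires integrating by parts $\int_\alpha k_{s^\ell}k_{s^{\ell+4}}\,ds$ twice and similarly the lower-order terms.

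For the $\lambda_1$ part, the term $F_{s^{\ell+2}}$ annihilates the constant $\lambda_1$. The remaining pieces $\lambda_1\sum_j(k\,k_{s^{\ell-j}})_{s^j}=\lambda_1P_2^\ell(k)$, multiplied by $k_{s^\ell}$, and the term $\tfrac12\lambda_1k\,k_{s^\ell}^2$ coming from $\partial_t\,ds$, together form $\lambda_1P_3^{2\ell}(k)$.

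The main obstacle is checking that every boundary term in (v) vanishes. In each boundary product $k_{s^a}k_{s^b}$ with $a+b$ odd, one of $a,b$ is odd, so the product vanishes by Lemma \ref{T:BCs}. This is where the parity bookkeeping must be done carefully, as in the proof of Lemma \ref{k8}.
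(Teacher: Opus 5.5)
Your proposal is correct and takes the route the paper intends: the paper states Lemma~\ref{h32} without a written proof, and the implied argument is to substitute $F=-k_{ss}+\lambda_1(t)$ into Lemma~\ref{eq of general F}, use $L\equiv L_0$, and integrate by parts, with every boundary term vanishing because odd curvature derivatives are zero at the ends (Lemma~\ref{T:BCs}). Your term-by-term computations in (i)--(iv) check out, and in (v) your parity argument also covers boundary terms with more than two factors: such terms have odd total derivative order, so at least one factor has odd order and vanishes.
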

\begin{proof}[Proof of Proposition~{\upshape\ref{bd}}.]
The proof follows, as in \cite{DKS02} and \cite{MWY}, using Lemmas  \ref{c2} and \ref{h32} (v).
\end{proof}

As in \cite{MWY}, under the flow \eqref{eq113}, we show that $K_{\mbox{osc}}$ is a $L^1$ function in time by using Proposition  \ref{psw}  \eqref{eqpsw} and Lemma \ref{h32} (i); however, here we do not use the isoperimetric ratio. 
  \begin{proposition} \label{T:KoscL1}
Under the flow \eqref{eq113}, we have   
$$\left\| K_{\mbox{osc}}\right\|_{1}\leq \frac{2\omega L_{0}^{2}}{\pi }\left ( \frac{L_{0}^{2}}{2 }-A\left ( 0 \right ) \right ),$$
where $A(0)$ corresponds to the signed enclosed area of $\alpha_0$.
\end{proposition}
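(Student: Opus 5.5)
The plan is to bound $K_{\mbox{osc}}$ pointwise in time by a constant multiple of $\frac{\mathrm{d}}{\mathrm{d}t}A$, and then integrate in time. All the analytic input is already available: the length is fixed by Lemma \ref{T:evlneqns48}, there is an explicit formula \eqref{h1} for $\lambda_1$, and Lemma \ref{h32} (i) gives the area evolution. The only genuinely geometric input is an a priori upper bound on the area $A(t)$ in terms of $L_0$.

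\textbf{Step 1 (pointwise bound).} Since $k-\bar k$ has zero mean and $L=L_0$, Proposition \ref{psw} \eqref{eqpsw} gives
$$K_{\mbox{osc}}=L_0\int_\alpha (k-\bar k)^2\,ds\le \frac{L_0^3}{\pi^2}\int_\alpha k_s^2\,ds .$$
By \eqref{h1} we have $\int_\alpha k_s^2\,ds=-2\pi\omega\,\lambda_1(t)$. Lemma \ref{h32} (i) gives $-\lambda_1(t)=\frac{1}{L_0}\frac{\mathrm{d}}{\mathrm{d}t}A$. Combining these,
$$K_{\mbox{osc}}(t)\le \frac{2\omega L_0^2}{\pi}\,\frac{\mathrm{d}}{\mathrm{d}t}A(t).$$
In particular $A$ is nondecreasing along the flow, because $\lambda_1\le 0$.

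\textbf{Step 2 (integration).} Integrating over $[0,t]$ for any $t<T$ gives
$$\int_0^t K_{\mbox{osc}}\,d\tau\le \frac{2\omega L_0^2}{\pi}\bigl(A(t)-A(0)\bigr).$$
Letting $t\to T$, the claim follows once we show $A(t)\le \frac{L_0^2}{2}$ for all $t$.

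\textbf{Step 3 (area bound; main obstacle).} I expect the bound $A(t)\le \frac{L_0^2}{2}$ to be the delicate point. I would first try the isoperimetric inequality for regions in a convex cone. The cone angle is $2\pi\omega<\pi$, and a curve of length $L$ meeting both sides encloses, together with the cone sides, at most the area of the arc of radius $r=L/(2\pi\omega)$ centred at the tip, namely
$$A\le \frac{L^2}{4\pi\omega}.$$
This gives the stated constant directly when $4\pi\omega\ge 2$. For small cone angles I would instead aim for a cruder direct estimate. The idea is to write $A=\frac12\int_\alpha\langle\alpha,\nu\rangle\,ds\le \frac12 L_0\max_\alpha|\alpha|$ and control $\max|\alpha|$ by the length, using that the curve must cross the cone from $\bar\beta_1$ to $\bar\beta_2$. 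The weak point of this route is that the curve could sit far from the tip; in that case I would fall back on the cone isoperimetric inequality, possibly replacing $\frac{L_0^2}{2}$ by $\frac{L_0^2}{4\pi\omega}$. Either way the structure of the argument is Steps 1–2, and the constant in the final bound is exactly whatever upper bound on $\sup_t A(t)$ this step delivers.
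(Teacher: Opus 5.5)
Your Steps 1 and 2 are exactly the paper's argument. Both use Proposition \ref{psw} for the mean-zero function $k-\bar{k}$, the identity $\int_\alpha k_s^2\,ds=\frac{2\pi\omega}{L_0}\frac{\mathrm{d}A}{\mathrm{d}t}$ from \eqref{h1} and Lemma \ref{h32} (i), and integration in time. The difference is Step 3, where your caution is well founded. The paper obtains $A(t)\le\frac{L_0^2}{2}$ by writing $A=\frac12\int_\alpha\langle\alpha-\overline{\alpha},\nu\rangle\,ds$, with $\overline{\alpha}$ the mean point, and then using $|\alpha-\overline{\alpha}|\le L_0$. That first equality requires $\int_\alpha\nu\,ds=0$. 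This holds for closed curves but not here: $\int_\alpha\nu\,ds$ is the chord $\alpha(1)-\alpha(-1)$ rotated by $\frac{\pi}{2}$, and the area measured from the cone tip is not translation invariant.

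In fact the bound $A\le\frac{L_0^2}{2}$, and with it the proposition as stated, is false for $\omega<\frac{1}{2\pi}$. Take the circular arc of radius $R$ centred at the tip. It satisfies \eqref{E:NBC1} and \eqref{E:NBC2} and has $k_s\equiv0$, hence $\lambda_1\equiv0$. So it is a stationary solution with $K_{\mbox{osc}}\equiv0$, while $L_0=2\pi\omega R$ and $A=\pi\omega R^2=\frac{L_0^2}{4\pi\omega}$. The right-hand side of the proposition is then $\frac{\omega L_0^4}{\pi}\bigl(1-\frac{1}{2\pi\omega}\bigr)<0$, whereas the left-hand side is $0$. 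So drop the ``cruder'' route through $\max|\alpha|$: nothing can give the constant $\frac{L_0^2}{2}$ for small cone angles. The crossing argument only yields $\max|\alpha|\le L_0/\sin(2\pi\omega)$, which degenerates as $\omega\to0$.

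Your fallback is the correct statement. For $\omega<\frac12$ the cone is convex, and the sharp isoperimetric inequality there gives $A(t)\le\frac{L_0^2}{4\pi\omega}$. Steps 1--2 then yield
\[
\left\|K_{\mbox{osc}}\right\|_1\le\frac{2\omega L_0^2}{\pi}\Bigl(\frac{L_0^2}{4\pi\omega}-A(0)\Bigr),
\]
whose right-hand side is nonnegative and vanishes exactly when $\alpha_0$ is the arc centred at the tip. This implies the paper's bound when $\omega\ge\frac{1}{2\pi}$ and is the right replacement when $\omega<\frac{1}{2\pi}$.

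What remains for you is a proof or precise citation of that inequality. For embedded curves it is the Lions--Pacella relative isoperimetric inequality in convex cones. Embeddedness need not be preserved by a fourth-order flow, so you should also justify the signed-area version. It follows by applying the same inequality to the superlevel sets $\{w\ge j\}$ of the winding-number function $w$. Their relative perimeters sum to at most $L_0$, so the sum of their squares is at most $L_0^2$.
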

\begin{proof}
\begin{equation*}
A = \frac{1}{2} \int_{\alpha} < \alpha, \nu > ds   
 = \frac{1}{2} \int_{\alpha} < \alpha - \overline{\alpha}, \nu > ds\leq \frac{1}{2} \left| \alpha - \overline{\alpha} \right|_\infty \int_{\alpha} 1\, ds
\end{equation*} 
where $\overline{\alpha}$ is the point whose coordinates are the average values of $x$ and of $y$ over the curve $\alpha$.
And $\int_{\alpha} 1\, ds = L$ while $\left| \alpha - \overline{\alpha} \right|_\infty \leq L$, because no point on the curve can be further than $L$ from the average value of the coordinates over the curve. Now, $ L$ is constant so we have
$$A(t) \leq \frac{1}{2} L_0^2$$

Estimate $$K_{\mbox{osc}}=L_0\int _{\alpha }\left ( k-\bar{k} \right )^{2}\,ds\leq \frac{ L_{0}^{3}}{\pi ^{2}}\left\| k_{s}\right\|_{2}^{2}=\frac{2\omega L_{0}^{2}}{\pi }\frac{\mathrm{d} A}{\mathrm{d} t}.$$

Therefore,
$$\left\| K_{\mbox{osc}}\right\|_{1}\leq \frac{2\omega L_{0}^{2}}{\pi }\bigg ( A\left ( t \right )-A\left ( 0 \right ) \bigg )\leq \frac{2\omega L_{0}^{2}}{\pi }\left ( \frac{L_{0}^{2}}{2 }-A\left ( 0 \right ) \right ).$$
\end{proof}
\begin{remark}
In view of our smallness conditions, we do not require Proposition \ref{T:KoscL1} subsequently.  However, we have included it for potential future use, in other scenarios with less restrictive smallness conditions.
\end{remark}

We have the following main result for this section.

 \begin{theorem} \label{T:main2}
 Suppose the supporting cone has $\omega < \frac{1}{2}$.  Let $\alpha_0: \left[ -1, 1\right]\to \mathbb{R}^{2}$, compatible with \eqref{E:NBC1} and \eqref{E:NBC2}, be a given initial curve of length $L_0$ whose boundary points are sufficiently far from the cone tip.  Assume further that $\alpha_0$ has $K_{\mbox{osc}}$ less than  $x>0$, where $x^{\frac{1}{2}}$ is the smallest solution of
 \begin{equation}\label{s1K_{OSC}}
 \left( 2 - 8 \omega^2 \right) - \frac{\sqrt{2}}{2\pi^{5/2} \omega} x^{\frac{3}{2}} - \frac{6}{\pi} x - 24\omega x^{\frac{1}{2}} = 0    \mbox{.}
 \end{equation}
  Then the length-constrained curve diffusion flow \eqref{eq113} with generalised Neumann boundary conditions \eqref{E:NBC1} and \eqref{E:NBC2} and initial condition $\alpha_0$ has a unique solution $\alpha\left( \cdot, t\right)$ on $\left[ 0, \infty\right)$. The solution curves converge smoothly and exponentially in the $C^\infty$-topology to the circular arc $\alpha_\infty$ of length $L_0$ centred at the cone tip.\\
\end{theorem}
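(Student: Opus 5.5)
The plan follows the template of Section \ref{S:lp}, but no rescaling is needed because $L\equiv L_0$ by Lemma \ref{T:evlneqns48}. The first and central step is to show that the smallness of $K_{\mbox{osc}}$ is preserved. Starting from Lemma \ref{h32} (iv), I will estimate the four curvature-only terms exactly as in the proof of \eqref{ekosc2}:
\[
3L_0\int(k-\bar k)^2k_s^2\,ds \le \tfrac{6}{\pi}K_{\mbox{osc}}L_0\int k_{ss}^2\,ds, \qquad 6L_0\bar k\int(k-\bar k)k_s^2\,ds \le 24\omega\sqrt{K_{\mbox{osc}}}\,L_0\int k_{ss}^2\,ds,
\]
and
\[
2\bar k^2L_0\int k_s^2\,ds\le 8\omega^2L_0\int k_{ss}^2\,ds.
\]
For the $\lambda_1$ terms I use the sign of $\lambda_1$. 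By \eqref{h1}, $\lambda_1=-\int k_s^2\,ds/(2\pi\omega)\le 0$ and $\bar k>0$, so $3L_0\lambda_1\bar k\int(k-\bar k)^2\,ds\le 0$ and can be discarded. The cubic term is the only remaining one. I bound it by
\[
|\lambda_1|\,L_0\,\|k-\bar k\|_\infty\int(k-\bar k)^2\,ds .
\]
Here $\|k-\bar k\|_\infty^2\le \frac{2L_0}{\pi}\int k_s^2\,ds$ by \eqref{eqppsw}, and I apply Proposition \ref{psw} repeatedly to pass from $\int k_s^2\,ds$ to $\frac{L_0^2}{\pi^2}\int k_{ss}^2\,ds$. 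Collecting powers of $L_0$, this term becomes $\frac{\sqrt2}{2\pi^{5/2}\omega}K_{\mbox{osc}}^{3/2}L_0\int k_{ss}^2\,ds$, up to arithmetic that I will check; this is exactly the source of the $x^{3/2}$ coefficient in \eqref{s1K_{OSC}}. Altogether,
\[
\frac{d}{dt}K_{\mbox{osc}}\le -L_0\,Q\big(K_{\mbox{osc}}\big)\int k_{ss}^2\,ds,
\]
where $Q$ is the left-hand side of \eqref{s1K_{OSC}}. Since $\omega<\tfrac12$, $Q(0)=2-8\omega^2>0$. As $Q$ is decreasing in $x\ge0$, $Q>0$ below its first root, so $K_{\mbox{osc}}$ is nonincreasing and the smallness persists.

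Second, I obtain global existence and exponential decay. With $K_{\mbox{osc}}$ bounded, the identity $\int k^2\,ds=(K_{\mbox{osc}}+4\pi^2\omega^2)/L_0$ gives a uniform bound on $\int k^2\,ds$. Theorem \ref{blowup} then forces $T=\infty$, provided the ends do not reach the tip, and Proposition \ref{bd} gives at most linear growth of every $\int k_{s^\ell}^2\,ds$. Because the initial value satisfies the strict inequality $K_{\mbox{osc}}(0)<x$, I can pick $\delta>0$ with $Q(K_{\mbox{osc}})\ge\delta$ for all $t$. Applying Proposition \ref{psw} twice in reverse gives $\int k_{ss}^2\,ds\ge \pi^4L_0^{-4}\int(k-\bar k)^2\,ds$, hence $\frac{d}{dt}K_{\mbox{osc}}\le-\delta\pi^4L_0^{-4}K_{\mbox{osc}}$ and $K_{\mbox{osc}}$ decays exponentially. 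The same induction as in Corollary \ref{cor3.14} — integration by parts, Lemma \ref{T:BCs} and H\"older, with exponential decay beating linear growth — gives exponential decay of every $\int k_{s^\ell}^2\,ds$. Proposition \ref{eq266} converts this to pointwise decay. By Lemma \ref{c2}, $\lambda_1$ also decays exponentially, so the speed $F$ decays exponentially.

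Third, I control the boundary and identify the limit. Since $|\partial_t\alpha|=|F|$ is integrable in time, each point, including the endpoints, moves a bounded total distance $D$ determined by the data. Choosing the endpoints of $\alpha_0$ farther than $D$ from the tip keeps them away from it, which justifies the use of Theorem \ref{blowup}. The curves converge to a limit with $k\equiv\bar k$ and length $L_0$ satisfying the Neumann conditions, i.e. the circular arc centred at the tip. $C^\infty$ exponential convergence then follows by the linearisation argument via the support function outlined at the end of Section \ref{S:lp} and in \cite{GM24}.

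The main obstacle is the first step. The $\lambda_1$-cubic term must be bounded with constants that reproduce \eqref{s1K_{OSC}}, while the sign of $\lambda_1$ is used to discard the $\bar k$ term. If the constant does not match, the fallback is to use Lemma \ref{c2} with $n=2$ together with the Poincar\'e inequalities, accepting a different smallness constant.
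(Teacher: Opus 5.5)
Your proposal is essentially the paper's proof: Lemma~\ref{lk1}, Corollary~\ref{K}, Lemma~\ref{exksc} and Corollary~\ref{ck5}, including discarding the $3L_0\bar k\lambda_1\int(k-\bar k)^2\,ds$ term by the sign of $\lambda_1$ and getting decay from the strict smallness via Proposition~\ref{psw} applied twice.

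The one step to state correctly is the cubic $\lambda_1$ term. Poincar\'e inequalities alone leave either $\big(\int k_{ss}^2\,ds\big)^{3/2}$ or an uncontrolled factor $\big(\int k_s^2\,ds\big)^{1/2}$. So, as the paper does, bound the $\lambda_1$ factor by the interpolation $\int k_s^2\,ds=-\int(k-\bar k)k_{ss}\,ds\le\big(\int(k-\bar k)^2ds\big)^{1/2}\big(\int k_{ss}^2ds\big)^{1/2}$, and use Proposition~\ref{psw} only once, on the $\int k_s^2\,ds$ coming from $\|k-\bar k\|_\infty$. This gives exactly the coefficient $\frac{\sqrt2}{2\pi^{5/2}\omega}$.
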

\begin{remark}
\begin{enumerate}
 \item The smallness requirement is the solution of a cubic equation in $y=x^{\frac{1}{2}}$. In view of the signs of the coefficients in \eqref{s1K_{OSC}}, there is one real solution.  By setting $y=x^{\frac{1}{2}}$ in \eqref{s1K_{OSC}} we obtain the following cubic equation 
$$\left(  2- 8 \omega^2\right)- \frac{\sqrt{2}}{2\pi^{5/2} \omega} y^3 - \frac{6}{\pi} y^2 - 24\omega y  = 0$$
with $
a =- \frac{\sqrt{2}}{2\pi^{5/2} \omega}, b=- \frac{6}{\pi},
c = -24\omega~~ \textit{and}~~d =  2- 8\omega^2.$ Now using Cardano’s formula gives the positive real root of the cubic as:
\begin{equation}\label{y}
y = -\frac{1}{3a} \left(b + C + \frac{\Delta_0}{C} \right),    
\end{equation} where
\begin{equation}\label{c}
C = \sqrt[3]{\frac{\Delta_1 + \sqrt{\Delta_1^2 - 4\Delta_0^3}}{2}},    
\end{equation}
and
\begin{align*}
\Delta_0 &= b^2 - 3ac =36 \left ( \frac{1}{\pi^2} - \frac{\sqrt{2}}{\pi^{5/2}} \right ) \\
\Delta_1 &= 2b^3 - 9abc + 27a^2d =- \frac{432}{\pi^3} - \frac{648\sqrt{2}}{\pi^{7/2}} + \frac{27(1-4\omega^2)}{\pi^5 \omega^2}. 
\end{align*}
  \item The 'sufficiently far from the cone tip' requirement can again be quantified via the same argument as in the remark of the previous section.
\end{enumerate}
\end{remark}

Again, in view of the results in Section 2, the key point is to control $K_{\mbox{osc}}$ under the flow.
\begin{lem}\label{lk1}
   Under the flow \eqref{eq113},
 \begin{equation}
\frac{\mathrm{d} }{\mathrm{d} t} K_{\mbox{osc}} + L_{0} \left( 2 - \frac{\sqrt{2}}{2\pi^{5/2} \omega} K_{\mbox{osc}}^{\frac{3}{2}} - \frac{6}{\pi} K_{\mbox{osc}} - 24\omega K_{\mbox{osc}}^{\frac{1}{2}}-8 \omega^2\right) 
\int_{\alpha} k_{ss}^2 \, ds \leq 0. 
\end{equation}
Consequently, if $\omega < \frac{1}{2}$ and $K_{\mbox{osc}}\left[ \alpha_0\right]$ is no bigger than the solution $x$ of \eqref{s1K_{OSC}}, then $K_{\mbox{osc}}$ does not increase under the flow.
\end{lem}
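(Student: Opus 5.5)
We start from the evolution equation for $K_{\mbox{osc}}$ in Lemma \ref{h32} (iv), using $L\equiv L_0$ from Lemma \ref{T:evlneqns48} and $\bar k = 2\pi\omega/L_0$. The goal is to bound every term except $-2L_0\int_\alpha k_{ss}^2\,ds$ by a multiple of $L_0\int_\alpha k_{ss}^2\,ds$, with coefficient expressed through $K_{\mbox{osc}}$ and $\omega$.

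The three $\lambda_1$-free terms are handled exactly as in the length-penalised section. Proposition \ref{psw} and \eqref{eqppsw}, applied to $k-\bar k$ (mean zero) and to $k_s$ (which vanishes at the endpoints by \eqref{E:NBC2}), together with Hölder, give
$$3L_0\int_\alpha (k-\bar k)^2k_s^2\,ds\le \tfrac{6}{\pi}K_{\mbox{osc}}L_0\int_\alpha k_{ss}^2\,ds,$$
$$6L_0\bar k\int_\alpha (k-\bar k)k_s^2\,ds\le 24\omega K_{\mbox{osc}}^{1/2}L_0\int_\alpha k_{ss}^2\,ds,$$
$$2\bar k^2L_0\int_\alpha k_s^2\,ds\le 8\omega^2L_0\int_\alpha k_{ss}^2\,ds.$$
These produce the terms $\frac6\pi K_{\mbox{osc}}$, $24\omega K_{\mbox{osc}}^{1/2}$ and $8\omega^2$ in the statement.

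The $\lambda_1$ terms come next. Since $\lambda_1=-\int_\alpha k_s^2\,ds/(2\pi\omega)\le 0$, the term $3L_0\bar k\lambda_1\int_\alpha (k-\bar k)^2\,ds$ is nonpositive, because $\omega>0$, so it can be discarded. This sign is the reason no $\omega$-dependent correction appears, in contrast to the penalised case. The remaining term $L_0\lambda_1\int_\alpha (k-\bar k)^3\,ds$ is the main obstacle, since it must yield exactly the coefficient $\frac{\sqrt2}{2\pi^{5/2}\omega}K_{\mbox{osc}}^{3/2}$. Its size is controlled by
$$|\lambda_1|\,L_0\,\|k-\bar k\|_\infty\int_\alpha (k-\bar k)^2\,ds=\frac{1}{2\pi\omega}\Big(\int_\alpha k_s^2\,ds\Big)\,\|k-\bar k\|_\infty\,K_{\mbox{osc}}.$$
For the sup norm I would use \eqref{eqppsw} as $\|k-\bar k\|_\infty\le (2L_0/\pi)^{1/2}\|k_s\|_2$. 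Then I would use $\int_\alpha k_s^2\,ds\le \frac{L_0^2}{\pi^2}\int_\alpha k_{ss}^2\,ds$, and for the leftover factor $\|k_s\|_2$ the bound $\|k_s\|_2^2\le \frac1{L_0^2}\|k-\bar k\|_2\,\|k_{ss}\|_2\,L_0^2$ obtained from integration by parts (as in the proof of \eqref{exc175}), or equivalently Lemma \ref{c2}. The aim is to arrange the powers of $L_0$ and $\pi$ so that the result reads $\frac{\sqrt2}{2\pi^{5/2}\omega}K_{\mbox{osc}}^{3/2}L_0\int_\alpha k_{ss}^2\,ds$. Scale invariance guarantees that the $L_0$ powers balance. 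To reach $K_{\mbox{osc}}^{3/2}$, one factor $K_{\mbox{osc}}$ comes directly and the extra $K_{\mbox{osc}}^{1/2}$ comes from converting one $\|k_s\|_2$ into $\|k-\bar k\|_2$ via interpolation; I expect some constant-tracking to be needed here. Summing all estimates gives the differential inequality.

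For the consequence, take $\omega<\frac12$, so that $2-8\omega^2>0$. The cubic in $y=x^{1/2}$ is positive at $y=0$ and strictly decreasing for $y>0$, so it has a unique positive root. If $K_{\mbox{osc}}[\alpha_0]$ lies below this root, the bracket is nonnegative, so $K_{\mbox{osc}}$ is nonincreasing at any time where it stays below the root. A standard continuity (first-time) argument then shows $K_{\mbox{osc}}$ never exceeds its initial value.
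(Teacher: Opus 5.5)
Your outline is the paper's own proof. You use the same evolution equation (Lemma \ref{h32}(iv)) and the same three $\lambda_1$-free estimates, and you discard $3L_0\bar k\lambda_1\int_\alpha (k-\bar k)^2\,ds$ by its sign, as the paper does. The gap is in the step you flag as the main obstacle. Here $\|\cdot\|_2$ is the plain $L^2$ norm, as in your proposal. After the sup-norm bound you have
\[
\frac{1}{2\pi\omega}\Big(\int_\alpha k_s^2\,ds\Big)\|k-\bar k\|_\infty K_{\mbox{osc}}\le\frac{1}{2\pi\omega}\sqrt{\frac{2L_0}{\pi}}\,\|k_s\|_2^3\,K_{\mbox{osc}}.
\]
As written, you spend Poincar\'e on the two factors coming from $|\lambda_1|$ and the interpolation inequality on the single leftover factor. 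On a single factor, interpolation only gives $\|k_s\|_2\le\|k-\bar k\|_2^{1/2}\|k_{ss}\|_2^{1/2}$. So this route ends at a multiple of $L_0^{9/4}K_{\mbox{osc}}^{5/4}\big(\int_\alpha k_{ss}^2\,ds\big)^{5/4}$. Scale invariance balances the powers of $L_0$, but the homogeneity in $k_{ss}$ is wrong. This cannot be brought to the form $K_{\mbox{osc}}^{3/2}L_0\int_\alpha k_{ss}^2\,ds$, since that would require bounding $\|k_{ss}\|_2$ above by lower-order quantities.

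The allocation must be reversed, which is what the paper does. Interpolate on the square coming from $\lambda_1$,
\[
\int_\alpha k_s^2\,ds=-\int_\alpha (k-\bar k)\,k_{ss}\,ds\le\Big(\frac{K_{\mbox{osc}}}{L_0}\Big)^{1/2}\|k_{ss}\|_2 ,
\]
and apply Poincar\'e only to the single factor from the sup norm:
\[
\|k-\bar k\|_\infty\le\sqrt{2L_0/\pi}\,\|k_s\|_2\le\sqrt{2L_0/\pi}\,\frac{L_0}{\pi}\|k_{ss}\|_2 .
\]
The product is then exactly
\[
\frac{1}{2\pi\omega}\cdot\frac{1}{\pi}\sqrt{\frac{2}{\pi}}\,L_0K_{\mbox{osc}}^{3/2}\int_\alpha k_{ss}^2\,ds=\frac{\sqrt2}{2\pi^{5/2}\omega}L_0K_{\mbox{osc}}^{3/2}\int_\alpha k_{ss}^2\,ds ,
\]
with no further constant-tracking needed.

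Lemma \ref{c2} is \emph{not} an equivalent substitute here. It involves the full $\int_\alpha k^2\,ds=(K_{\mbox{osc}}+4\pi^2\omega^2)/L_0$, which does not vanish with $K_{\mbox{osc}}$, so it cannot supply the extra factor $K_{\mbox{osc}}^{1/2}$. You need the version with $k-\bar k$ obtained by integrating by parts.

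A minor point on the consequence: the statement allows $K_{\mbox{osc}}[\alpha_0]$ to equal the root, but your first-time argument covers only the strict case. Once $K_{\mbox{osc}}$ passes the root, the differential inequality gives no sign. The equality case needs a short Gr\"onwall argument, using that the bracket is Lipschitz near the root and that $\int_\alpha k_{ss}^2\,ds$ is bounded on compact time intervals.
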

\begin{proof}
We begin with the evolution equation of  Lemma \ref{h32} (iv), we estimate using Proposition \ref{psw} \eqref{eqpsw} and \eqref{eqppsw} as in \cite{MWY} and \cite{Wheeler2013} 
$$3L_0\int _{\alpha }\left ( k-\bar{k} \right )^{2}k_{s}^{2}\,ds\leq \frac{6L_0}{\pi }K_{\mbox{osc}}\left\|k_{ss} \right\|_{2}^{2},$$
 $$6\bar{k}L_0\int _{\alpha }\left ( k-\bar{k} \right )k_{s}^{2}\,ds\leq 24\omega L_0 \sqrt{K_{\mbox{osc}}} \left\|k_{ss} \right\|_{2 }^{2}$$
 and\begin{align*}
 2L_0\bar{k}^{2}\left\|k_{s} \right\|_{2}^{2}&\leq 8\omega ^{2}L_0\left\|k_{ss} \right\|_{2}^{2}.
\end{align*}
While we discard the last non-positive term that is not useful in Lemma \ref{h32} (iv), we estimate the term $L_0 \lambda_1(t)  \int _{\alpha } (k - \overline{k})^3 \, ds $ as follows:
$$ \int_{\alpha} k_s^2\, ds = - \int_{\alpha} k\, k_{ss} \, ds 
= - \int_{\alpha} (k - \bar{k})\, k_{ss} \, ds 
\leq \left( \int_{\alpha} (k - \bar{k})^2 \, ds \right)^{\frac{1}{2}} \left( \int_{\alpha} k_{ss}^2 \, ds \right)^{\frac{1}{2}} $$
and 
$$\int_{\alpha} (k - \bar{k})^3 \, ds 
\leq \|k - \bar{k}\|_{\infty} \int_{\alpha} (k - \bar{k})^2 \, ds;$$
therefore,
$$L_0\, \lambda_1(t) \int_{\alpha} (k - \bar{k})^3 \, ds\leq L_0\, \left | \lambda_1(t)\right | \int_{\alpha} (k - \bar{k})^3 \, ds$$
By using Lemma \ref{c2}, when $n=1$, we have 
\begin{equation*}
L_0\, \lambda_1(t) \int_{\alpha} (k - \bar{k})^3 \, ds\leq L_0\, \left | \lambda_1(t)\right | \int_{\alpha} (k - \bar{k})^3 \, ds  
  \leq \frac{L_0}{2\omega \pi}\int_{\alpha } k_{s}^{2}\,ds \int_{\alpha} (k - \bar{k})^3 \, ds  
\end{equation*}
thus,
\begin{equation*}
L_0\, \lambda_1(t) \int_{\alpha} (k - \bar{k})^3 \, ds 
\leq \frac{L_0}{2\pi\omega} 
 \left( \int_{\alpha} (k - \bar{k})^2 \, ds \right)^{\frac{3}{2}} \|k - \bar{k}\|_{\infty}  
\|k_{ss}\|_2 
\leq \frac{L_0}{\pi^2 \omega \sqrt{2\pi}} K_{\mbox{osc}}^{\frac{3}{2}} \|k_{ss}\|_2^2.
\end{equation*}
By inserting these estimates into  Lemma  \ref{h32} (iv), we complete the proof.    
\end{proof}

As in the previous section, the bound on $K_{\mbox{osc}}$ implies $\int_\alpha k^2 ds$ is bounded.  Since $L$ is constant here, we deduce $T=\infty$.

A refinement of the proof of Lemma \ref{lk1} now yields a crucial exponential decaying quantity under the flow, namely, $\int_{\alpha } \left( k - \overline{k}\right)^2 ds$.
\begin{cor}\label{K} 
Suppose $\omega < \frac{1}{2}$ and $\alpha_0$ is such that $K_{\mbox{osc}}\left[ \alpha_0\right]$ is strictly less than the solution $x$ of \eqref{s1K_{OSC}}.  Then, under the flow \eqref{eq113},
$$\int _{\alpha }\left( k-\overline{k}\right)^2 ds =\frac{K_{\mbox{osc}}\left( t\right)}{L_0} \leq \delta_1 e^{- \frac{\delta \pi^4}{L_0^4} t} \mbox{},$$
where $\delta_1=\displaystyle{\frac{K_{\mbox{osc}}\left( 0\right)}{L_0}}\,\mbox{.}$
\end{cor}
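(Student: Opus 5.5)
Since $K_{\mbox{osc}}[\alpha_0]$ is strictly below the root $x$ of \eqref{s1K_{OSC}}, I will mimic the argument of Corollary \ref{kosc5}. I will pick $\delta>0$ small enough that the bracket
$$\Phi(y) := 2 - 8\omega^2 - \frac{\sqrt{2}}{2\pi^{5/2}\omega}\, y^{\frac{3}{2}} - \frac{6}{\pi}\, y - 24\omega\, y^{\frac{1}{2}}$$
satisfies $\Phi(K_{\mbox{osc}}[\alpha_0]) \geq \delta$. Such a $\delta$ exists because $\Phi$ is continuous and strictly decreasing in $y\geq 0$, and $\Phi(x)=0$. Since $\omega<\frac12$ gives $\Phi(0)=2-8\omega^2>0$, the root $x$ is positive, so this choice is possible.

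By Lemma \ref{lk1}, $K_{\mbox{osc}}$ is nonincreasing along the flow. Monotonicity of $\Phi$ then gives $\Phi(K_{\mbox{osc}}(t))\geq \Phi(K_{\mbox{osc}}(0))\geq\delta$ for all $t\geq 0$, so Lemma \ref{lk1} upgrades to
$$\frac{\mathrm{d}}{\mathrm{d}t}K_{\mbox{osc}} \leq -\delta L_0 \int_\alpha k_{ss}^2\,ds .$$

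Next I convert $\int k_{ss}^2$ back to $K_{\mbox{osc}}$ by applying Proposition \ref{psw} twice in reverse, as in Corollary \ref{T:Koscexp}. First, $k_s$ vanishes at the endpoints by \eqref{E:NBC2}, so the second inequality of Proposition \ref{psw} gives $\int k_s^2\,ds \leq \frac{L_0^2}{\pi^2}\int k_{ss}^2\,ds$. Second, $k-\bar k$ has zero mean, so \eqref{eqpsw} gives $\int (k-\bar k)^2\,ds\leq \frac{L_0^2}{\pi^2}\int k_s^2\,ds$. Combining these and using that $L\equiv L_0$ by Lemma \ref{T:evlneqns48}, we get
$$L_0\int_\alpha k_{ss}^2\,ds \geq \frac{\pi^4}{L_0^4}\, L_0\int_\alpha (k-\bar k)^2\,ds = \frac{\pi^4}{L_0^4}K_{\mbox{osc}}.$$
Therefore $\frac{\mathrm{d}}{\mathrm{d}t}K_{\mbox{osc}}\leq -\frac{\delta\pi^4}{L_0^4}K_{\mbox{osc}}$. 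Gronwall's inequality then gives $K_{\mbox{osc}}(t)\leq K_{\mbox{osc}}(0)e^{-\delta\pi^4 t/L_0^4}$, and dividing by $L_0$ yields the stated bound with $\delta_1 = K_{\mbox{osc}}(0)/L_0$.

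The main point needing care is the first step: obtaining a uniform positive lower bound on the bracket for all time, rather than just nonpositivity of the derivative. This relies on monotonicity of $\Phi$ together with the non-increase of $K_{\mbox{osc}}$ from Lemma \ref{lk1}. Note also that global existence ($T=\infty$) has already been established, since $\int k^2$ stays bounded and $L$ is constant, so the differential inequality holds for all $t\geq0$.
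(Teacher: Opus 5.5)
Your proof is correct and follows essentially the same route as the paper: choose $\delta>0$ so that the bracket in Lemma \ref{lk1} is at least $\delta$, deduce $\frac{\mathrm{d}}{\mathrm{d}t}K_{\mbox{osc}}\le -\delta L_0\int_\alpha k_{ss}^2\,ds$, apply Proposition \ref{psw} twice (using $k_s(\pm 1)=0$ and $\int_\alpha (k-\bar k)\,ds=0$), and integrate. Your explicit argument that the bracket stays $\ge\delta$ for all $t$, via monotonicity of $\Phi$ and the non-increase of $K_{\mbox{osc}}$, spells out what the paper leaves implicit in the phrase ``in view of the strict inequality''.
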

\begin{proof}
In view of the strict inequality we have $\delta >0$ such that
\begin{equation}
2 -\frac{\sqrt{2}}{2\pi^{5/2} \omega} K_{\mbox{osc}}^{\frac{3}{2}} - \frac{6}{\pi} K_{\mbox{osc}} - 24\omega K_{\mbox{osc}}^{\frac{1}{2}}-8 \omega^2 \geq \delta >0    \mbox{.} 
\end{equation}
From Lemma \ref{lk1}, we obtain
 \begin{equation*} 
    \frac{\mathrm{d} }{\mathrm{d} t}K_{\mbox{osc}}\leq - \delta L_0\left\|k_{ss} \right\|_{2}^{2} \mbox{}
 \end{equation*}
Therefore,  \begin{equation*} 
    \frac{\mathrm{d} }{\mathrm{d} t}\left ( \frac{K_{\mbox{osc}}}{L_0} \right )\leq - \delta \left\|k_{ss} \right\|_{2}^{2} \mbox{}
 \end{equation*}
Hence, using Proposition \ref{psw} twice, we have
$$\frac{\mathrm{d} }{\mathrm{d} t}\left ( \frac{K_{\mbox{osc}}}{L_0} \right ) \leq - \delta \,  \frac{\pi^4}{L{_0}^4} \left ( \frac{K_{\mbox{osc}}}{L_0} \right )   \mbox{}$$ from which the result follows.
\end{proof}

\begin{lem}\label{exksc}
 Let $\alpha :\left [ -1, 1 \right ]\times \left [ 0,  T\right )\to \mathbb{R}^{2}$ be a length-constrained curve diffusion flow with generalised Neumann boundary conditions \eqref{E:NBC1} and \eqref{E:NBC2}. Then there exist constants $C >0$ and $\delta>0$ such that 
  \begin{equation}\label{r3}
  \int_{\alpha } k_s^2\, ds 
  \leq C\, e^{- \frac{\delta \pi^4}{4\, L_0^4} t }\,  , ~~~~~~~~~~~~~\textit{for all}~~~t\geq0,\end{equation}
 that is $\int _{\alpha }k_{s}^{2}\,ds$ decays exponentially to zero.
\end{lem}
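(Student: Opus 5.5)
The plan mirrors the base case \eqref{exc175} of the previous section, now in the unrescaled time variable, since $L\equiv L_0$ here by Lemma \ref{T:evlneqns48}. Throughout, I assume the hypotheses of Corollary \ref{K}, namely $\omega<\frac12$ and $K_{\mbox{osc}}[\alpha_0]$ strictly below the root of \eqref{s1K_{OSC}}. These are implicit in the statement.

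\textbf{Step 1 (interpolation).} First I integrate by parts. Since $k_s=0$ at $\pm1$ by \eqref{E:NBC2} and Lemma \ref{T:BCs}, and $\bar k$ is constant in $s$, the H\"older inequality gives
$$\int_\alpha k_s^2\,ds=-\int_\alpha (k-\bar k)\,k_{ss}\,ds\le \left(\int_\alpha (k-\bar k)^2ds\right)^{\frac12}\left(\int_\alpha k_{ss}^2ds\right)^{\frac12}.$$

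\textbf{Step 2 (decaying factor).} Corollary \ref{K} controls the first factor:
$$\int_\alpha (k-\bar k)^2ds\le \delta_1 e^{-\frac{\delta\pi^4}{L_0^4}t},$$
so its square root decays like $e^{-\frac{\delta\pi^4}{2L_0^4}t}$.

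\textbf{Step 3 (growth of $\int k_{ss}^2$).} Lemma \ref{lk1} says $K_{\mbox{osc}}$ does not increase. By the remark after Corollary \ref{kosc5},
$$\int_\alpha k^2ds=\frac{K_{\mbox{osc}}+4\pi^2\omega^2}{L_0}$$
is then bounded by a constant $C_0$ for all $t$. The length is fixed at $L_0>0$, so the hypotheses of Proposition \ref{bd} with $\ell=2$ hold. Its proof for flow \eqref{eq113} uses Lemmas \ref{c2} and \ref{h32}(v). It gives
$$\int_\alpha k_{ss}^2ds\big|_t\le \int_\alpha k_{ss}^2ds\big|_0+c_2C_0^{9}\,t=:C_1+C_2t.$$

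\textbf{Step 4 (combine).} Putting the three steps together,
$$\int_\alpha k_s^2ds\le \delta_1^{\frac12}(C_1+C_2t)^{\frac12}e^{-\frac{\delta\pi^4}{2L_0^4}t}.$$
I then split off half the exponent. The function $(C_1+C_2t)^{\frac12}e^{-\frac{\delta\pi^4}{4L_0^4}t}$ is bounded on $[0,\infty)$, so absorbing its supremum into $C$ gives \eqref{r3} with rate $\frac{\delta\pi^4}{4L_0^4}$.

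The main point needing care is Step 3. Proposition \ref{bd} needs a uniform bound on $\int k^2\,ds$ over the whole of $[0,\infty)$. That bound comes only from the preserved smallness of $K_{\mbox{osc}}$, not directly from anything in the statement. I also need $T=\infty$, so that the linear-in-$t$ growth is valid for all time. The remark after Lemma \ref{lk1} supplies this, provided neither endpoint reaches the cone tip. I would state this standing assumption (the ``sufficiently far'' hypothesis of Theorem \ref{T:main2}) explicitly.

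Beyond that, the term in Proposition \ref{bd} involving $\lambda_1$ must be controlled through Lemma \ref{c2}. That is a routine interpolation, so I take Proposition \ref{bd} as given. The rest is elementary.
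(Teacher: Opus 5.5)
Your proposal is correct and matches the paper's proof step for step. You integrate by parts using $k_s(\pm1,t)=0$, apply H\"older, take the exponential decay of $\int_\alpha (k-\bar k)^2\,ds$ from Corollary \ref{K} and the linear-in-$t$ bound on $\int_\alpha k_{ss}^2\,ds$ from Proposition \ref{bd} with $\ell=2$, then absorb $(C_1+C_2t)^{1/2}$ by giving up half the exponent. Stating the hypotheses the lemma leaves implicit is a worthwhile clarification, because the paper invokes Corollary \ref{K} without restating them: $\omega<\frac12$, strict smallness of $K_{\mbox{osc}}[\alpha_0]$, and endpoints staying away from the cone tip so that $T=\infty$.
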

\begin{proof}Integrating by parts and using the property at the boundaries, Lemma \ref{T:BCs}, the H\"{o}lder inequality, Corollary \ref{K}, and from Proposition \ref{bd}, that $ \int_{\alpha } k_{ss}^2\, ds\leq\left ( C_1+C_{0}^{9}\, t \right )$,  where $C_1:=\int _{\alpha }k_{ss}^{2}\,ds\bigg{|}_{t=0}$,  we obtain
\begin{align*}
  \int_{\alpha }  k_s^2\,ds 
  = - \int_{\alpha } \left( k - \overline{k} \right) k_{ss}\, ds 
 &\leq \left[ \int_{\alpha } \left( k - \overline{k} \right)^2 ds \right]^{\frac{1}{2}} \left( \int_{\alpha } k_{ss}^2\, ds \right)^{\frac{1}{2}}
 \\& \leq \left ( C_1+C_{0}^{9}\, t \right )^{\frac{1}{2}}\,\delta _{1}^{\frac{1}{2}} \,  e^{- \frac{\delta \pi^4}{2\, L_0^4} t }\leq C\,e^{- \frac{\delta \pi^4}{4\, L_0^4} t } \mbox{,}
  \end{align*}  
which completes the proof.
\end{proof}
\begin{cor} \label{ck5}
Under the flow  \eqref{eq113}, for all $\ell \in \mathbb{N}$, there exist corresponding $\delta_\ell >0$ and $\tilde C_\ell >0$ such that
  \begin{equation*}
 \int _{\alpha }k_{s^\ell}^2\,ds\leq \tilde C_\ell  \, e^{-\delta_\ell t} \mbox{,}~~~~~~~~~~~\textit{for all}~~~t\geq0.
  \end{equation*} 
\end{cor}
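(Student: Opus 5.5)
We argue by induction on $\ell$, with Lemma \ref{exksc} as the base case $\ell=1$; the case $\ell=0$ for $k-\overline{k}$ is Corollary \ref{K}. The argument follows the pattern of Corollary \ref{cor3.14}, except that no rescaling is needed because $L\equiv L_0$ under \eqref{eq113}.

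First we fix the uniform input. By Lemma \ref{lk1} the quantity $K_{\mbox{osc}}$ does not increase. Hence
\[
\int_\alpha k^2\,ds=\frac{K_{\mbox{osc}}+4\pi^2\omega^2}{L_0}\le C_0
\]
for all $t\ge0$. Proposition \ref{bd} (proved for this flow via Lemmas \ref{c2} and \ref{h32} (v)) then gives, for every $m$,
\[
\int_\alpha k_{s^m}^2\,ds\le \int_\alpha k_{s^m}^2\,ds\Big|_0+c_m C_0^{2m+5}\,t .
\]
So every $\int_\alpha k_{s^m}^2\,ds$ grows at most linearly in $t$.

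For the inductive step, assume $\int_\alpha k_{s^\ell}^2\,ds\le \tilde C_\ell e^{-\delta_\ell t}$ with $\ell\ge1$. We integrate by parts once:
\[
\int_\alpha k_{s^{\ell+1}}^2\,ds = k_{s^{\ell+1}}k_{s^\ell}\Big|_{\partial\alpha}-\int_\alpha k_{s^\ell}k_{s^{\ell+2}}\,ds .
\]
The boundary term vanishes by Lemma \ref{T:BCs}, since one of $\ell$, $\ell+1$ is odd. This is the only place the generalised Neumann conditions enter. The H\"older inequality then gives
\[
\int_\alpha k_{s^{\ell+1}}^2\,ds\le \Big(\tilde C_\ell e^{-\delta_\ell t}\Big)^{1/2}\Big(C+c\,C_0^{2\ell+9}t\Big)^{1/2}.
\]
Since $(C+ct)^{1/2}e^{-\delta_\ell t/4}$ is bounded on $[0,\infty)$, the right-hand side is at most $\tilde C_{\ell+1}e^{-\delta_{\ell+1}t}$ with $\delta_{\ell+1}=\delta_\ell/4$. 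This closes the induction.

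The only point needing care is whether the linear-growth bound from Proposition \ref{bd} really holds on all of $[0,\infty)$ with constants independent of $t$. This relies on $T=\infty$, which follows from the bounded $\int k^2$, the fixed length, and the assumption that the endpoints stay away from the tip. It also relies on the $\lambda_1$ terms in Lemma \ref{h32} (v) being absorbed uniformly. Lemma \ref{c2} gives $|\lambda_1|\le C_0^{1-1/n}\big(\int k_{s^n}^2\big)^{1/n}/(2\pi\omega)$, and combined with Proposition \ref{p} this controls $\lambda_1\int P_3^{2\ell}(k)\,ds$ by $\varepsilon\int k_{s^{\ell+2}}^2\,ds$ plus powers of $C_0$.

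If one prefers to avoid linear growth altogether, an alternative is to interpolate $\int k_{s^{\ell+1}}^2\,ds$ between $\int(k-\overline{k})^2\,ds$ and a higher norm, using Lemma \ref{k8} applied to $k-\overline{k}$. This yields exponential decay with exponent $\delta/(\text{const}\cdot\ell)$, but it still requires only polynomial growth of the higher norm, so the induction above is the simplest route.
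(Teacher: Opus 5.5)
Your proposal is correct and is the paper's argument: the paper proves this "as for Corollary \ref{cor3.14}", i.e.\ induction from Lemma \ref{exksc}, one integration by parts with the boundary term removed by Lemma \ref{T:BCs}, H\"older, the inductive hypothesis, and the at most linear-in-$t$ bound on $\int_\alpha k_{s^{\ell+2}}^2\,ds$ from Proposition \ref{bd}. Your choice $\delta_{\ell+1}=\delta_\ell/4$, which absorbs the factor $(C+ct)^{1/2}$, is slightly more careful than the paper's $\delta_{\ell+1}=\delta_\ell/2$ with constant $C_{\ell+2}$, since the latter tacitly treats that bound as uniform in $t$.
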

\begin{proof}The proof follows, as for Corollary \ref{cor3.14}.
%This is proved by a standard induction argument.  The base case $\ell=1$ is as in Lemma \ref{exksc} \eqref{r3}.  So assume that
 %$$\int _{\alpha }k_{s^\ell}^{2}\,ds\leq \tilde C_\ell e^{-\delta_\ell t},$$ for some $\tilde{C_\ell}>0$.  Integrating by parts, and using the property at the boundaries, Lemma \ref{T:BCs}, we estimate the following by using the H\"{o}lder inequality:
%$$\int _{\alpha }k_{s^{\ell+1}}^2\,ds=-\int _{\alpha }k_{s^{\ell+2}}\,k_{s^{\ell}}\,ds\leq \left ( \int _{\alpha }k_{s^{\ell+2}}^2\,ds \right )^{\frac{1}{2}}\left ( \int _{\alpha }k_{s^{\ell}}^2\,ds \right )^{\frac{1}{2}}.$$
% Using the inductive assumption and  Proposition \ref{bd}, yields $$\int _{\alpha }k_{s^{\ell+1}}^2\,ds\leq C_{\ell+2}\,  \tilde{C_\ell}^\frac{1}{2}e^{-\frac{\delta_\ell}{2} t}.$$
 %Taking $\tilde C_{\ell+1} = C_{\ell+2} \tilde C_\ell^{\frac{1}{2}}$ and $\delta_{\ell+1} = \frac{\delta_\ell}{2}$ completes the proof.     
\end{proof}
\begin{proof}[Completion of the proof of Theorem~{\upshape\ref{T:main2}}]
The argument is similar to that in the previous section, but there is no rescaling here and as deduced earlier, $T=\infty$.  Since the curvature decays pointwise to its average, the limiting curve is the unique circular arc satisfying the boundary conditions and with length $L_0$. Smooth exponential convergence of the embedding map $\alpha\left( \cdot, t\right)$ to that of the circular arc in the $C^\infty$-topology now follows similarly as in \cite{GM24}, for example.  
\end{proof}

% ------------------------------------------------------------------------

\section{Length-constrained curve diffusion flow \texorpdfstring{$F= -k_{ss} +\lambda_2(t)\,k$}{lambda(t) } } \label{S:lfff}  
In this section, we consider
\begin{equation}\label{eq09}
\frac{\partial \alpha }{\partial t}=\bigg(  k_{ss}-\lambda_2(t) \,k\bigg)\,\nu \mbox{.}
\end{equation} 
Taking
\begin{equation}\label{h18}
\lambda_2\left ( t \right ):=\displaystyle{\frac{-\int _{\alpha }k_{s}^{2}\,ds}{\int _{\alpha }k^2\,ds} }
\end{equation}
ensures $L[\alpha_t] = L[\alpha_0] =: L_0$ for all $t$. Similarly as in the previous section we have a bound on the size of $\lambda_2(t)$.

\begin{lem}\label{c212}For each $n \in \mathbb{N} $
$$\left | \lambda_2 (t)\right |\leq \frac{L_0}{\left ( 2\pi \omega  \right )^{2} } \left( \int_{\alpha } k^2 \, ds \right)^{1 - \frac{1}{n}} \left( \int_{\alpha } k_{s^n}^2 \, ds \right)^{\frac{1}{n}}.$$ 
\end{lem}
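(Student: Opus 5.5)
The bound follows by splitting $\lambda_2(t)$ into its numerator and denominator and treating each with a result already available. The numerator $\int_\alpha k_s^2\,ds$ is handled by Lemma \ref{k8}. The denominator $\int_\alpha k^2\,ds$ is bounded below using the H\"older inequality \eqref{E:Holder} together with the fact that the length is preserved.

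\textbf{Step 1 (denominator).} Since $\lambda_2(t)$ is defined to keep $L[\alpha_t] = L_0$, we use $L = L_0$ throughout. Corollary \ref{T:omega} gives $\int_\alpha k\,ds = 2\pi\omega$, and then \eqref{E:Holder} yields
$$\int_\alpha k^2\,ds \geq \frac{(2\pi\omega)^2}{L_0}.$$
It follows that
$$|\lambda_2(t)| = \frac{\int_\alpha k_s^2\,ds}{\int_\alpha k^2\,ds} \leq \frac{L_0}{(2\pi\omega)^2}\int_\alpha k_s^2\,ds.$$

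\textbf{Step 2 (numerator).} The case $n=1$ is exactly the inequality above, since $\left(\int_\alpha k^2\,ds\right)^{0}=1$.

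For $n\geq 2$, we interpolate $\int_\alpha k_s^2\,ds$ between $\int_\alpha k^2\,ds$ and $\int_\alpha k_{s^n}^2\,ds$. Write $a_j = \int_\alpha k_{s^j}^2\,ds$. Integration by parts, with the boundary terms vanishing because each contains an odd derivative (Lemma \ref{T:BCs}), combined with Cauchy--Schwarz, gives the log-convexity
$$a_j \leq a_{j-1}^{1/2}\,a_{j+1}^{1/2}.$$
This is the same chain used in the proof of Lemma \ref{k8}. From it one obtains the standard interpolation
$$a_1 \leq a_0^{1-\frac{1}{n}}\,a_n^{\frac{1}{n}}.$$
One route is to apply Lemma \ref{k8} in the inductive form $a_{j} \leq a_{j-1}^{1/2} a_{j+1}^{1/2}$ and iterate. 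Equivalently, log-convexity of $j\mapsto \log a_j$ gives $\log a_1 \leq (1-\tfrac{1}{n})\log a_0 + \tfrac{1}{n}\log a_n$ directly. Substituting this into Step 1 yields the stated bound.

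\textbf{Main obstacle.} The only delicate point is that Lemma \ref{k8} as stated compares $a_{n-1}$ with $a_n$, not $a_1$ with $a_n$. So the log-convexity chain must be made explicit to reach the exponent $\frac{1}{n}$ on $a_n$. This is handled by the convexity argument above, which rests on the vanishing of the boundary terms.
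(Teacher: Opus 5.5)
Your proposal is correct and follows the paper's route. The paper also uses the H\"older bound $\int_\alpha k^2\,ds \geq (2\pi\omega)^2/L_0$ to get $|\lambda_2(t)| \leq \frac{L_0}{(2\pi\omega)^2}\int_\alpha k_s^2\,ds$, then invokes the same interpolation $\int_\alpha k_s^2\,ds \leq (\int_\alpha k^2\,ds)^{1-\frac1n}(\int_\alpha k_{s^n}^2\,ds)^{\frac1n}$ as in Lemma \ref{c2}; you make that interpolation explicit via log-convexity. One small point: taking logarithms needs every $a_j > 0$, but $a_{j+1} = 0$ forces $a_j = 0$ for $j \geq 1$ by your inequality $a_j \leq a_{j-1}^{1/2}a_{j+1}^{1/2}$, so any vanishing gives $a_1 = 0$ and the bound holds trivially.
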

\begin{proof}
From \eqref{h18}, we have 
$$\left | \lambda_2 (t)\right |= \displaystyle{\frac{\int _{\alpha }k_{s}^{2}\,ds}{\int _{\alpha }k^2\,ds}}$$
by using  H\"{o}lder inequality, we obtain $$\left | \lambda_2 (t)\right |\leq \frac{L_0}{\left ( 2\pi \omega  \right )^{2} }  \int_{\alpha } k_{s}^2 \, ds .$$
The proof now proceeds similarly to that of Lemma \ref{c2}.
\end{proof}

\begin{lem} \label{h34}
Under the flow \eqref{eq09}, we have 
$$\displaystyle{\frac{\mathrm{d} }{\mathrm{d} t}A\left [ \alpha_t \right ]=-2\pi\,\omega\,\lambda_2 (t)}. $$
\end{lem}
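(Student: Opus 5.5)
Under the flow \eqref{eq09} the normal speed is $F = -k_{ss} + \lambda_2(t)\,k$. The plan is to apply the general area evolution of Lemma \ref{eq of general F} (iii), namely
\begin{equation*}
\frac{\mathrm{d}}{\mathrm{d}t} A = -\int_\alpha F\, ds = \int_\alpha k_{ss}\, ds - \lambda_2(t) \int_\alpha k\, ds ,
\end{equation*}
and to evaluate each of the two integrals on the right separately.

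For the first integral, the fundamental theorem of calculus in arc length gives $\int_\alpha k_{ss}\,ds = k_s(1,t) - k_s(-1,t)$. This vanishes by the no-curvature-flux condition \eqref{E:NBC2}, equivalently by Lemma \ref{T:BCs} with $\ell=0$. This is the same observation that gives $\frac{\mathrm{d}}{\mathrm{d}t}A = -\lambda_1 L_0$ in Lemma \ref{h32} (i) and $-2\pi\omega\lambda$ in Lemma \ref{T:evlneqns2} (i).

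For the second integral, the definition \eqref{E:omega} of the rotation number together with Corollary \ref{T:omega} gives $\int_\alpha k\,ds = 2\pi\omega$. Here $\omega = (\Theta_1-\Theta_2)/(2\pi)$ is fixed by the Neumann condition \eqref{E:NBC1}. Substituting this yields $\frac{\mathrm{d}}{\mathrm{d}t}A = -2\pi\,\omega\,\lambda_2(t)$, as claimed.

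The only point requiring care is the validity of Lemma \ref{eq of general F} (iii) in the presence of boundary. Since $\partial_t\alpha$ is normal and the endpoints slide along rays through the origin, the boundary contributions to $\frac{\mathrm{d}}{\mathrm{d}t}\frac12\int\langle\alpha,\nu\rangle\,ds$ are of the form $\langle \alpha, \partial_t\alpha\rangle$-type terms evaluated at $\pm1$. These vanish because $\alpha(\pm1,t)$ is parallel to the cone side while $\partial_t\alpha$ is normal to the curve, and hence parallel to the cone side's normal by \eqref{E:NBC1}. I will take this lemma as given, as the paper does. There is no real obstacle beyond this.
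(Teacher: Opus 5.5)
Your proof is correct and is the route the paper intends (it states the lemma without proof): apply Lemma \ref{eq of general F} (iii), note that $\int_\alpha k_{ss}\,ds = k_s(1,t)-k_s(-1,t)$ vanishes by \eqref{E:NBC2}, and use $\int_\alpha k\,ds = 2\pi\omega$. Your closing aside has the right conclusion but wrong details: \eqref{E:NBC1} gives $\nu(\pm1,t)\perp e_\pm$, so $\partial_t\alpha(\pm1,t)=-F\nu$ points \emph{along} the cone side (as it must for the endpoint to stay on the ray), not along its normal, and the boundary term from differentiating $\frac12\int_\alpha\langle\alpha,\nu\rangle\,ds$ is $\frac12\big[F\langle\alpha,\tau\rangle\big]_{-1}^{1}$, which vanishes because $\tau(\pm1,t)$ is perpendicular to the ray containing $\alpha(\pm1,t)$.
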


In view of \eqref{h18} we observe that the enclosed area is nondecreasing under this flow.

\begin{lem} \label{T:ksL23}
Under the flow \eqref{eq09},
 \begin{multline*}
\frac{\mathrm{d} }{\mathrm{d} t}  K_{\mbox{osc}}=-2L_0\int _{\alpha }k_{ss}^{2}\,ds+3L_0\int _{\alpha }k^{2}k_{s}^{2}\,ds-\bar{k}^{2}L_0\int _{\alpha }k_{s}^{2}\,ds  -2\lambda_2 \left ( t \right )L_0\int _{\alpha }k_{s}^{2}\,ds\\+\lambda_2 \left ( t \right )L_0\int _{\alpha }k^4\,ds -\lambda_2 \left ( t \right )\bar{k}^{2}L_0\int _{\alpha }k^2\,ds
\end{multline*} 
\end{lem}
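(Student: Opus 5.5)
Since the length is fixed under \eqref{eq09} (by the choice \eqref{h18} of $\lambda_2$, exactly as in Lemma \ref{T:evlneqns48}), we have $K_{\mbox{osc}} = L_0 \int_\alpha (k-\bar k)^2\,ds$ with a constant prefactor. So the plan is to compute $\frac{\mathrm{d}}{\mathrm{d}t}\int_\alpha (k-\bar k)^2\,ds$ and multiply by $L_0$. Corollary \ref{T:omega} says $\int_\alpha k\,ds = 2\pi\omega$ is constant, so $\bar k = 2\pi\omega/L_0$ is also constant. No extra terms arise from differentiating either $L$ or $\bar k$.

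For the main computation I will apply Lemma \ref{eq of general F} (vi) with the speed $F = -k_{ss} + \lambda_2(t)\,k$. This gives
\begin{align*}
\frac{\mathrm{d}}{\mathrm{d}t}\int_\alpha (k-\bar k)^2\,ds
&= -2\int_\alpha k_{ss}^2\,ds + 2\lambda_2\int_\alpha k\,k_{ss}\,ds - \int_\alpha k^3 k_{ss}\,ds \\
&\quad + \bar k^2\int_\alpha k\,k_{ss}\,ds + \lambda_2\int_\alpha k^2(k^2-\bar k^2)\,ds .
\end{align*}
The last term is already $\lambda_2\int_\alpha k^4\,ds - \lambda_2\bar k^2\int_\alpha k^2\,ds$, which are the final two terms in the statement.

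Each remaining term containing $k_{ss}$ (other than $k_{ss}^2$) is handled by one integration by parts. The boundary terms all carry a factor $k_s(\pm1,t)$, which vanishes by \eqref{E:NBC2} (equivalently Lemma \ref{T:BCs}). This gives
\[
\int_\alpha k\,k_{ss}\,ds = -\int_\alpha k_s^2\,ds
\qquad\text{and}\qquad
-\int_\alpha k^3k_{ss}\,ds = 3\int_\alpha k^2k_s^2\,ds .
\]
Substituting these yields the terms $-2\lambda_2\int_\alpha k_s^2\,ds$, $3\int_\alpha k^2k_s^2\,ds$ and $-\bar k^2\int_\alpha k_s^2\,ds$. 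Multiplying through by $L_0$ then gives exactly the stated formula.

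There is no real obstacle here. The only points needing care are, first, confirming that $L$ and $\bar k$ are genuinely constant, so that no derivative-of-$L$ terms appear (unlike in Corollary \ref{lambda1}). Second, the vanishing of the boundary terms in the integrations by parts must come from the no-curvature-flux condition. Third, the sign conventions in Lemma \ref{eq of general F} (vi) must be tracked correctly when $F$ contains the $+\lambda_2 k$ term.
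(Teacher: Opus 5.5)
Your proposal is correct and follows essentially the same route as the paper: substitute $F=-k_{ss}+\lambda_2(t)\,k$ into Lemma \ref{eq of general F} (vi), integrate by parts using $k_s(\pm 1,t)=0$, and multiply by the constant length $L_0$. The paper does the $k_{ss}$-terms in a single integration by parts, $-\int_\alpha k(k^2-\bar k^2)k_{ss}\,ds=-\bar k^2\int_\alpha k_s^2\,ds+3\int_\alpha k^2k_s^2\,ds$, and leaves $2\lambda_2\int_\alpha k\,k_{ss}\,ds=-2\lambda_2\int_\alpha k_s^2\,ds$ implicit, whereas you write both steps out.
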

\begin{proof}
By substituting $F= -\,k_{ss}+\lambda_2(t)\,k$ into Lemma \ref{eq of general F} (vi) and  integrating by parts 
\begin{multline*}
 -\int _{\alpha }k\left ( k^{2}-\bar{k}^{2} \right )k_{ss}\,ds=\int _{\alpha }\left ( k^{2}-\bar{k}^{2} \right )k_{s}^{2}\,ds+2\int _{\alpha }k^{2}k_{s}^{2}\,ds \\
 =-\bar{k}^{2}\int _{\alpha }k_{s}^{2}\,ds+3\int _{\alpha }k^{2}k_{s}^{2}\,ds\,\mbox{,}
\end{multline*} we complete the proof.
\end{proof}
\begin{cor}\label{cokosc}
Under the flow \eqref{eq09},
 \begin{align*}%\label{kosc3}
\frac{\mathrm{d} }{\mathrm{d} t}  K_{\mbox{osc}}  &=\lambda_2 \left ( t \right )L_0\int _{\alpha }\left ( k-\bar{k} \right )^{4}\,ds+5\bar{k}^{2}\lambda_2 \left ( t \right )L_0\int _{\alpha }\left ( k-\bar{k} \right )^{2}\,ds+2\bar{k}^{2}L_0\int _{\alpha }k_{s}^2\,ds\nonumber\\&\qquad+4\bar{k}\lambda_2 \left ( t \right )L_0\int _{\alpha }\left ( k-\bar{k} \right )^{3}\,ds+3L_0\int _{\alpha }\left ( k-\bar{k} \right )^{2}k_{s}^2\,ds \nonumber\\&\qquad+6\bar{k}L_0\int _{\alpha }\left ( k-\bar{k} \right )k_{s}^{2}\,ds-2L_0\int _{\alpha }k_{ss}^{2}\,ds-2\lambda_2 \left ( t \right )L_0\int _{\alpha }k_{s}^{2}\,ds\,\mbox{.}
\end{align*}   
\end{cor}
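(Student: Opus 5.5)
The identity follows from Lemma \ref{T:ksL23} by substituting $k=(k-\bar{k})+\bar{k}$ everywhere and simplifying. Lemma \ref{T:ksL23} expresses $\frac{\mathrm{d}}{\mathrm{d}t}K_{\mbox{osc}}$ through $\int k^2k_s^2$, $\int k^4$ and $\int k^2$. We rewrite each of these as moments of $k-\bar{k}$, using repeatedly that $\int_\alpha (k-\bar{k})\,ds=0$ and that $\bar{k}$ is constant in $s$. The terms $-2L_0\int k_{ss}^2$ and $-2\lambda_2 L_0\int k_s^2$ already appear in the target and are carried through unchanged.

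First, expand $k^2=(k-\bar{k})^2+2\bar{k}(k-\bar{k})+\bar{k}^2$ inside $3L_0\int k^2k_s^2\,ds$. This gives
\[
3L_0\int(k-\bar{k})^2k_s^2\,ds+6\bar{k}L_0\int(k-\bar{k})k_s^2\,ds+3\bar{k}^2L_0\int k_s^2\,ds.
\]
Combining the last term with $-\bar{k}^2L_0\int k_s^2\,ds$ yields $2\bar{k}^2L_0\int k_s^2\,ds$, as claimed.

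Next, treat the $\lambda_2$ terms. The binomial expansion gives
\[
\int k^4=\int(k-\bar{k})^4+4\bar{k}\int(k-\bar{k})^3+6\bar{k}^2\int(k-\bar{k})^2+\bar{k}^4L_0,
\]
where the linear term vanishes by the zero-mean property. Similarly,
\[
\int k^2=\int(k-\bar{k})^2+\bar{k}^2L_0.
\]
Hence
\[
\int k^4-\bar{k}^2\int k^2=\int(k-\bar{k})^4+4\bar{k}\int(k-\bar{k})^3+5\bar{k}^2\int(k-\bar{k})^2,
\]
since the $\bar{k}^4L_0$ terms cancel. Multiplying by $\lambda_2L_0$ produces the first, second and fourth terms of the stated formula.

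Collecting all terms gives exactly the expression in the corollary. The only place needing care is the cancellation of the $\bar{k}^4L_0$ constants and the vanishing of the linear moment, which relies on the definition \eqref{eqkk} of $\bar{k}$. No boundary terms arise, since this is a purely algebraic rearrangement of Lemma \ref{T:ksL23}.
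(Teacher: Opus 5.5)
Your proposal is correct and follows the paper's proof exactly: the paper also obtains the corollary by substituting $k=(k-\bar{k})+\bar{k}$ into Lemma~\ref{T:ksL23}. Your expansions check out, including the vanishing linear moment and the cancellation of the $\bar{k}^4 L_0$ terms that produces the coefficient $5\bar{k}^2$; you have simply written out the bookkeeping the paper leaves implicit.
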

\begin{proof}
By writing $k=(k-\bar{k})+\bar{k}$ into Lemma \ref{T:ksL23}, the result follows.\end{proof}

We have the following main result for this section.
 \begin{theorem} \label{T:main3}
 Suppose the supporting cone has $\omega < \frac{1}{2}$. Let $\alpha_0: \left[ -1, 1\right]\to \mathbb{R}^{2}$ be compatible with \eqref{E:NBC1} and \eqref{E:NBC2} be a given initial curve of length $L_0$ sufficiently far from the cone tip.  Suppose further that
 $\alpha_0$ has $K_{\mbox{osc}}$ less than  $x>0$, where $x^{\frac{1}{2}}$ is the smallest solution of 
\begin{equation}\label{s2K_{OSC}}
 \left( 2 - 8 \omega^2 \right)-\frac{2}{\omega\,\pi^2}\sqrt{\frac{2}{\pi }} x^{\frac{3}{2}}-24\omega x^{\frac{1}{2}}-\left ( \frac{1}{2\left ( \omega \pi  \right )^{2}}+\frac{ 6}{\pi } \right ) x=0   \mbox{.} 
\end{equation}
 Then the length-constrained curve diffusion flow \eqref{eq09} with generalised Neumann boundary conditions \eqref{E:NBC1} and \eqref{E:NBC2} and initial data $\alpha_0$ has a unique solution $\alpha\left( \cdot, t\right)$ on $\left[ 0, \infty\right)$.  The solution curves converge smoothly and exponentially in the $C^\infty$-topology to the circular arc $\alpha_\infty$ of length $L_0$ centred at the cone tip.\\
\end{theorem}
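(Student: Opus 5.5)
The plan follows the scheme of Section \ref{S:lf}. The core is a differential inequality for $K_{\mbox{osc}}$ analogous to Lemma \ref{lk1}. Once smallness of $K_{\mbox{osc}}$ is preserved, length is fixed ($L=L_0$ by \eqref{h18}), so \eqref{E:Holder} gives $\int_\alpha k^2\,ds = (K_{\mbox{osc}}+4\pi^2\omega^2)/L_0$ bounded. Theorem \ref{blowup} then gives $T=\infty$, and Proposition \ref{bd} gives at most linear growth of all $\int_\alpha k_{s^\ell}^2\,ds$.

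The first step is to bound each term in Corollary \ref{cokosc} by a multiple of $L_0\|k_{ss}\|_2^2$. The key observation is that $\lambda_2(t)\le 0$ by \eqref{h18}. Hence $-2\lambda_2L_0\int_\alpha k_s^2\,ds\ge 0$ is harmful, but $\lambda_2L_0\int_\alpha(k-\bar k)^4\,ds$ and $5\bar k^2\lambda_2L_0\int_\alpha(k-\bar k)^2\,ds$ are nonpositive and are discarded. The curvature-diffusion terms are estimated exactly as in Lemma \ref{lk1}:
\[
3L_0\int_\alpha(k-\bar k)^2k_s^2\,ds\le \tfrac{6}{\pi}K_{\mbox{osc}}L_0\|k_{ss}\|_2^2,\qquad 6\bar kL_0\int_\alpha(k-\bar k)k_s^2\,ds\le 24\omega\sqrt{K_{\mbox{osc}}}L_0\|k_{ss}\|_2^2,
\]
and $2\bar k^2L_0\|k_s\|_2^2\le 8\omega^2L_0\|k_{ss}\|_2^2$. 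For the $\lambda_2$ terms, use $|\lambda_2|\le \frac{L_0}{(2\pi\omega)^2}\int_\alpha k_s^2\,ds$ from Lemma \ref{c212} with $n=1$, together with
\[
\int_\alpha k_s^2\,ds\le\Bigl(\int_\alpha (k-\bar k)^2\,ds\Bigr)^{1/2}\|k_{ss}\|_2 .
\]
The term $-2\lambda_2L_0\int_\alpha k_s^2\,ds$ is then bounded by $\frac{2L_0^2}{(2\pi\omega)^2}\bigl(\int_\alpha k_s^2\,ds\bigr)^2\le \frac{L_0}{2(\pi\omega)^2}K_{\mbox{osc}}\|k_{ss}\|_2^2$, which produces the coefficient $\frac{1}{2(\omega\pi)^2}$ in \eqref{s2K_{OSC}}. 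For the cubic term, combine $\|k-\bar k\|_\infty\le (2L_0/\pi)^{1/2}\|k_s\|_2$ from \eqref{eqppsw} and $\|k_s\|_2^2\le \frac{L_0^2}{\pi^2}\|k_{ss}\|_2^2$ from \eqref{eqpsw} with the bound on $|\lambda_2|$:
\[
4\bar k|\lambda_2|L_0\int_\alpha|k-\bar k|^3\,ds\le \frac{4\cdot 2\pi\omega}{L_0}\,\frac{L_0^2}{4\pi^2\omega^2}\,\|k_s\|_2^2\,\|k-\bar k\|_\infty\int_\alpha(k-\bar k)^2\,ds .
\]
Using the Hölder bound for one factor of $\|k_s\|_2$ and \eqref{eqppsw} for the sup norm turns this into $c\,K_{\mbox{osc}}^{3/2}L_0\|k_{ss}\|_2^2$ with $c=\frac{2}{\omega\pi^2}\sqrt{2/\pi}$, after tracking powers of $L_0$ (all terms are scale invariant, so the powers must cancel).

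Collecting the estimates gives
\[
\frac{d}{dt}K_{\mbox{osc}}+L_0\,P(K_{\mbox{osc}})\,\|k_{ss}\|_2^2\le0,
\]
where $P(x)$ is the left side of \eqref{s2K_{OSC}}. Since $\omega<\frac12$ we have $P(0)>0$, and $P$ is decreasing in $x$. So if $K_{\mbox{osc}}[\alpha_0]$ is below the root, $K_{\mbox{osc}}$ is nonincreasing, and strict inequality yields $P\ge\delta>0$ along the flow. Applying Proposition \ref{psw} twice as in Corollary \ref{K} gives exponential decay of $\int_\alpha(k-\bar k)^2\,ds$. Then, as in Lemma \ref{exksc} and Corollary \ref{ck5}, integration by parts with Lemma \ref{T:BCs} and the linear-growth bounds of Proposition \ref{bd} give exponential decay of every $\int_\alpha k_{s^\ell}^2\,ds$.

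The speed $F=-k_{ss}+\lambda_2k$ then decays exponentially, because $|\lambda_2|\le c\int_\alpha k_s^2\,ds$ and $k$ is bounded. Therefore the boundary points move a finite total distance, which quantifies "sufficiently far from the cone tip". Convergence to the unique arc centred at the tip with length $L_0$, and $C^\infty$ convergence, then follow as in the completion of Theorem \ref{T:main2} and \cite{GM24}.

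The main obstacle is the sign and the exact constant bookkeeping in the $\lambda_2$ terms. In particular, the harmful term $-2\lambda_2L_0\|k_s\|_2^2$ must be absorbed with the quadratic-in-$K_{\mbox{osc}}$ coefficient stated in \eqref{s2K_{OSC}}, and the cubic term must come out with exactly the constant $\frac{2}{\omega\pi^2}\sqrt{2/\pi}$.
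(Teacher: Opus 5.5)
Your proposal is correct and follows essentially the same route as the paper: you use $\lambda_2\le 0$ to discard the two nonpositive $\lambda_2$ terms of Corollary~\ref{cokosc}, and you bound the harmful term $-2\lambda_2L_0\int_\alpha k_s^2\,ds$ and the cubic term via Lemma~\ref{c212} with $n=1$ together with the H\"older and Poincar\'e--Sobolev--Wirtinger estimates, obtaining exactly the coefficients in \eqref{s2K_{OSC}} as in Corollaries~\ref{kosk5} and~\ref{eqko}, before concluding as in Section~\ref{S:lf}. Your use of $\int_\alpha|k-\bar k|^3\,ds$ for the sign-indefinite cubic term is, if anything, slightly more careful than the paper's write-up.
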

\begin{remark}
\begin{enumerate}
\item Comparing the coefficients in \eqref{s2K_{OSC}} with those in \eqref{s1K_{OSC}} we observe the smallness requirement on $K_{\mbox{osc}}$ here is more strict.

\item As in the previous section, the smallness condition on $K_{\mbox{osc}}$ may be written down explicitly using the Cardano formula, while the sufficient distance of the boundary points of the initial curve to the cone tip can again be quantified similarly.

\end{enumerate}
\end{remark}

While the proof of Proposition \ref{bd} is again similar in this case, given the form of the flow speed \eqref{eq09}, an additional argument is needed to handle some lower order terms.  We provide this next for completeness.

\begin{proof}[Proof of Proposition~{\upshape\ref{bd}}.]
Using Lemma \ref{T:evlneqns2} (v), where instead of $\lambda$ we have in this case $\lambda_2(t)$ 
\begin{multline}\label{b332}
\frac{\mathrm{d} }{\mathrm{d} t}\frac{1}{2}\int _{\alpha }k_{s^{\ell}}^{2}\,ds
=-\int _{\alpha }k_{s^{\ell+2}}^{2}\,ds-\lambda_2 \left ( t \right ) \int _{\alpha }k_{s^{\ell+1}}^{2}\,ds+\lambda_2 \left ( t \right ) \int _{\alpha }P_{4}^{2\ell}(k)\,ds +\int _{\alpha }P_{4}^{2\ell+2}(k)\,ds\\
\leq -(1-\varepsilon)\int _{\alpha }k_{s^{\ell+2}}^{2}\,ds-\lambda_2 \left ( t \right ) \int _{\alpha }k_{s^{\ell+1}}^{2}\,ds+\lambda_2 \left ( t \right ) \int _{\alpha }P_{4}^{2\ell}(k)\,ds+c\left ( \varepsilon  \right )\left (\int _{\alpha }k^{2}\,ds  \right )^{2\ell+5}.
\end{multline} 
For the term $-\lambda_2 \left ( t \right )\int _{\alpha }k_{s^{\ell+1}}^{2}\,ds$, by using [Corollary 2.4] as in \cite{MWY}, we have for each $n\in \mathbb{N}$
$$ \frac{\int _{\alpha }k_{s}^{2}\,ds}{\left ( 2\pi \omega  \right )^{2}}\leq \frac{1}{\left ( 2\pi \omega  \right )^{2} }\left ( \int _{\alpha }k^{2}\,ds \right )^{1-\frac{1}{n}} \left (  \int _{\alpha }k_{s^{n}}^{2}\,ds \right )^{\frac{1}{n}}$$
therefore, $$\int _{\alpha }k_{s}^{2}\,ds\leq \left ( \int _{\alpha }k^{2}\,ds \right )^{1-\frac{1}{n}} \left (  \int _{\alpha }k_{s^{n}}^{2}\,ds \right )^{\frac{1}{n}}$$
thus, $$\frac{\int _{\alpha }k_{s}^{2}\,ds}{\int _{\alpha }k^{2}\,ds}\leq \left ( \int _{\alpha }k^{2}\,ds \right )^{-\frac{1}{n}} \left (  \int _{\alpha }k_{s^{n}}^{2}\,ds \right )^{\frac{1}{n}}$$ 
in particular for $n=\ell+1$, we have $$\frac{\int _{\alpha }k_{s}^{2}\,ds}{\int _{\alpha }k^{2}\,ds}\,\int _{\alpha }k_{s^{\ell+1}}^{2}\,ds\leq \left ( \int _{\alpha }k^{2}\,ds \right )^{-\frac{1}{\ell+1}} \left (  \int _{\alpha }k_{s^{\ell+1}}^{2}\,ds \right )^{1+\frac{1}{\ell+1}}.$$\\
By using Proposition \ref{psw} and Young's inequality, we obtain
\begin{align*}
\frac{\int _{\alpha }k_{s}^{2}\,ds}{\int _{\alpha }k^{2}\,ds}\,\int _{\alpha }k_{s^{\ell+1}}^{2}\,ds &\leq \varepsilon \int _{\alpha }k_{s^{\ell+2}}^{2}\,ds-c\varepsilon ^{\left ( \frac{1}{\ell+1}\right )}   \int _{\alpha }k^{2}\,ds. 
\end{align*}
For the term $\lambda_2 \left ( t \right ) \int _{\alpha }P_{4}^{2\ell}(k)\,ds$, we proceed similarly as in \cite{MWY}, and make the necessary adjustments
$$\lambda_2(t) \int_{\alpha} P_4^{2\ell}(k) \, ds \leq 2\varepsilon \int_{\alpha}  k_{s^{\ell+2}} ^2 \, ds + c\left ( \varepsilon  \right ) \left( \int_{\alpha } k^2\, ds \right)^{\frac{3\ell + 5}{\ell + 1}} 
+ c\left ( \varepsilon  \right ) \left( \int_{\alpha } k^2\, ds \right)^{\frac{4\ell + 9}{3}}.$$

Substituting these estimates into \eqref{b332}, we have for small $\varepsilon$
\begin{equation}\label{erk}
\frac{\mathrm{d} }{\mathrm{d} t}\frac{1}{2}\int _{\alpha }k_{s^{\ell}}^{2}\,ds\leq - \int _{\alpha }k_{s^{\ell+2}}^{2}\,ds+ C   \left ( \int _{\alpha }k^{2}\,ds \right )^{2\ell+5} + c \left( \int_{\alpha } k^2\, ds \right)^{\frac{3\ell + 5}{\ell + 1}} 
+ c \left( \int_{\alpha } k^2\, ds \right)^{\frac{4\ell + 9}{3}}.
\end{equation}
Discarding the first term on the right-hand side of \eqref{erk}, and in view of the boundedness of $\int_\alpha k^2\, ds$, the result follows by integration over the finite time interval. %The bound on $\int_\alpha k_{s^\ell}^2\, ds$ then follows using Proposition \ref{psw} twice in reverse.
\end{proof}

\begin{cor}\label{kosk5}
Under the flow \eqref{eq09},
 \begin{multline*}
\frac{\mathrm{d} }{\mathrm{d} t}  K_{\mbox{osc}}  \leq 4\bar{k}L_0\lambda_2 \left ( t \right )\int _{\alpha }\left ( k-\bar{k} \right )^{3}\,ds+3L_0\int _{\alpha }\left ( k-\bar{k} \right )^{2}k_{s}^{2}\,ds +6\bar{k}L_0\int _{\alpha }\left ( k-\bar{k} \right )k_{s}^{2}\,ds\\
-2L_0\int _{\alpha }k_{ss}^{2}\,ds -2L_0\lambda_2 \left ( t \right )\int _{\alpha }k_{s}^{2}\,ds+2L_0\bar{k}^{2}\int _{\alpha }k_{s}^{2}\,ds
\end{multline*}  
\end{cor}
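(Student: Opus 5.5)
The plan is to start from the exact identity in Corollary \ref{cokosc} and simply discard two terms that have a definite sign. The comparison is direct: the right-hand side of the claimed inequality consists of exactly six of the eight terms in Corollary \ref{cokosc}. These are the $\bar{k}\lambda_2$-cubic term, the two $k_s^2$-weighted terms, the $-2L_0\int_\alpha k_{ss}^2\,ds$ term, the $-2\lambda_2 L_0\int_\alpha k_s^2\,ds$ term and the $2\bar{k}^2L_0\int_\alpha k_s^2\,ds$ term. The two terms missing are
$$\lambda_2(t)L_0\int_\alpha (k-\bar{k})^4\,ds \quad\mbox{and}\quad 5\bar{k}^2\lambda_2(t)L_0\int_\alpha (k-\bar{k})^2\,ds \mbox{.}$$
So it suffices to show that both are nonpositive.

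The key step is the sign of $\lambda_2(t)$. From the definition \eqref{h18},
$$\lambda_2(t) = -\frac{\int_\alpha k_s^2\,ds}{\int_\alpha k^2\,ds} \mbox{.}$$
The numerator is nonnegative. The denominator is strictly positive while the solution exists, since by \eqref{E:Holder} we have $\int_\alpha k^2\,ds \geq 4\pi^2\omega^2/L_0 > 0$, with $\omega>0$ and $L$ fixed at $L_0$ under this flow. Hence $\lambda_2(t)\leq 0$, consistent with the observation after Lemma \ref{h34} that the area is nondecreasing.

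With $\lambda_2(t)\le 0$ in hand, the rest is immediate. The factors $L_0>0$ and $\bar{k}^2\geq 0$ are nonnegative, and so are the integrals $\int_\alpha (k-\bar{k})^4\,ds$ and $\int_\alpha (k-\bar{k})^2\,ds$. Each of the two omitted terms is therefore a product of $\lambda_2(t)\leq 0$ with a nonnegative quantity, hence nonpositive. Dropping them from the identity in Corollary \ref{cokosc} yields the stated inequality.

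There is no real obstacle here. The only point needing care is to keep the terms that contain $\lambda_2$ but have no definite sign: the cubic term $4\bar{k}\lambda_2 L_0\int_\alpha (k-\bar{k})^3\,ds$ can have either sign and must be retained. The term $-2\lambda_2 L_0\int_\alpha k_s^2\,ds$ is actually nonnegative, so it cannot be discarded in an upper bound and must also stay. Both are later estimated using Lemma \ref{c212}.
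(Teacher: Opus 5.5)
Your proposal is correct and takes the same approach as the paper. The paper obtains the inequality by discarding the first two terms of Corollary \ref{cokosc}, $\lambda_2(t)L_0\int_\alpha (k-\bar{k})^4\,ds$ and $5\bar{k}^2\lambda_2(t)L_0\int_\alpha (k-\bar{k})^2\,ds$, as nonpositive; this is valid because $\lambda_2(t)\le 0$ by \eqref{h18}, and your justification of that sign is sound.
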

\begin{proof}
Discarding the first and the second  non-positive terms that are not useful in Corollary \ref{cokosc}, completes the proof. \end{proof}
\begin{cor}\label{eqko}
Under the flow \eqref{eq09},
\begin{equation*}
 \frac{\mathrm{d} }{\mathrm{d} t}  K_{\mbox{osc}} \leq  - L_0\left[ 2-8\omega ^2-\frac{2}{\omega\,\pi^2}\sqrt{\frac{2}{\pi }} K_{\mbox{osc}}^{\frac{3}{2}}-24\,\omega K_{\mbox{osc}}^{\frac{1}{2}}-\left ( \frac{1}{2\omega^2 \pi^2}+\frac{ 6}{\pi } \right ) K_{\mbox{osc}}\right]\int _{\alpha }k_{ss}^{2}\,ds \mbox{.}
\end{equation*}
Consequently, if $\omega < \frac{1}{2}$ and $K_{\mbox{osc}}\left[ \alpha_0\right]$ satisfies the smallness condition of Theorem \ref{T:main3}, we have that $K_{\mbox{osc}}$ does not increase under the flow.
\end{cor}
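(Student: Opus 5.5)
Starting from Corollary \ref{kosk5}, I will estimate each term on the right-hand side by a multiple of $L_0\int_\alpha k_{ss}^2\,ds$ whose coefficient depends only on $\omega$ and $K_{\mbox{osc}}$.

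\textbf{Terms without $\lambda_2$.} These are handled exactly as in the proof of Lemma \ref{lk1}, using Proposition \ref{psw} \eqref{eqpsw}, \eqref{eqppsw}, the H\"older inequality and $\bar k = 2\pi\omega/L_0$:
\begin{equation*}
3L_0\int_\alpha (k-\bar k)^2k_s^2\,ds\leq \frac{6L_0}{\pi}K_{\mbox{osc}}\|k_{ss}\|_2^2,
\qquad
6\bar kL_0\int_\alpha (k-\bar k)k_s^2\,ds\leq 24\omega L_0K_{\mbox{osc}}^{\frac12}\|k_{ss}\|_2^2,
\end{equation*}
and $2L_0\bar k^2\|k_s\|_2^2\leq 8\omega^2L_0\|k_{ss}\|_2^2$. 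Together with $-2L_0\|k_{ss}\|_2^2$ these produce $-L_0\bigl(2-8\omega^2-24\omega K_{\mbox{osc}}^{1/2}-\frac6\pi K_{\mbox{osc}}\bigr)\|k_{ss}\|_2^2$.

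\textbf{The $-2L_0\lambda_2\int k_s^2$ term.} Since $|\lambda_2|=\int k_s^2/\int k^2$ and $\int k^2\geq(2\pi\omega)^2/L_0$ by \eqref{E:Holder}, this term is at most $\frac{2L_0^2}{(2\pi\omega)^2}\bigl(\int k_s^2\bigr)^2$. I use the integration by parts estimate from Lemma \ref{lk1}, namely $\int k_s^2\leq(\int(k-\bar k)^2)^{1/2}\|k_{ss}\|_2$, so that $(\int k_s^2)^2\leq \frac{K_{\mbox{osc}}}{L_0}\|k_{ss}\|_2^2$. This gives a bound $\frac{L_0}{2\omega^2\pi^2}K_{\mbox{osc}}\|k_{ss}\|_2^2$, which is exactly the extra coefficient $\frac{1}{2(\omega\pi)^2}$ in front of $x$ in \eqref{s2K_{OSC}}.

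\textbf{The $4\bar kL_0\lambda_2\int(k-\bar k)^3$ term.} I bound it by
\begin{equation*}
4\bar kL_0|\lambda_2|\,\|k-\bar k\|_\infty\int(k-\bar k)^2 .
\end{equation*}
Then $\bar k|\lambda_2|\leq \frac{2\pi\omega}{L_0}\cdot\frac{L_0}{(2\pi\omega)^2}\int k_s^2=\frac{1}{2\pi\omega}\int k_s^2$. I use $\int k_s^2\leq(\int(k-\bar k)^2)^{1/2}\|k_{ss}\|_2$ once more, and $\|k-\bar k\|_\infty\leq(\frac{2L_0}{\pi})^{1/2}\|k_s\|_2\leq (\frac{2L_0}{\pi})^{1/2}\frac{L_0}{\pi}\|k_{ss}\|_2$ by \eqref{eqppsw} and \eqref{eqpsw}. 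Collecting powers of $L_0$ (everything is scale invariant, so they must cancel into $K_{\mbox{osc}}^{3/2}$), this yields
\begin{equation*}
\frac{4}{2\pi\omega}\sqrt{\frac2\pi}\,\frac1\pi\,L_0K_{\mbox{osc}}^{\frac32}\|k_{ss}\|_2^2=\frac{2}{\omega\pi^2}\sqrt{\frac2\pi}\,L_0K_{\mbox{osc}}^{\frac32}\|k_{ss}\|_2^2 .
\end{equation*}
This matches the cubic coefficient. Summing all the bounds gives the displayed inequality.

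\textbf{Consequence.} When $\omega<\frac12$ we have $2-8\omega^2>0$, so the bracket is positive at $K_{\mbox{osc}}=0$ and decreasing in $K_{\mbox{osc}}$. It stays nonnegative while $K_{\mbox{osc}}$ lies below the smallest positive root of \eqref{s2K_{OSC}}. A standard continuity argument then applies: at the first time $K_{\mbox{osc}}$ would exceed its initial value, the derivative is $\leq0$, which is a contradiction. Hence $K_{\mbox{osc}}$ is nonincreasing.

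The main obstacle is bookkeeping the constants in the $\lambda_2$ cubic term, so that the powers of $L_0$ cancel and the stated coefficient comes out exactly. In particular, the sign of $\lambda_2$ is unknown there, so absolute values must be used throughout.
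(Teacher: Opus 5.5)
Your proposal is correct and follows the paper's proof essentially step for step. It uses the same three $\lambda_2$-free estimates borrowed from Lemma \ref{lk1}. For the $-2L_0\lambda_2\int_\alpha k_s^2\,ds$ term it uses the same bound $|\lambda_2|\le \frac{L_0}{(2\pi\omega)^2}\int_\alpha k_s^2\,ds$ (Lemma \ref{c212} with $n=1$) together with $\int_\alpha k_s^2\,ds\le (K_{\mbox{osc}}/L_0)^{1/2}\|k_{ss}\|_2$. For the cubic term it uses the same $L^\infty$/Poincar\'e chain, and all constants match the paper's.

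Two small corrections:
\begin{itemize}
\item The sign of $\lambda_2$ is in fact known: $\lambda_2\le 0$ by \eqref{h18}, and this is exactly what justifies discarding the terms in Corollary \ref{kosk5}. What is unknown is the sign of $\int_\alpha(k-\bar k)^3\,ds$.
\item The bound $\|k_s\|_2\le\frac{L_0}{\pi}\|k_{ss}\|_2$ comes from the Dirichlet version of Proposition \ref{psw}, which applies because $k_s(\pm1)=0$. It does not come from the mean-zero version \eqref{eqpsw}.
\end{itemize}
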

\begin{proof}
 As in the proof of Lemma \ref{lk1},   
 $$3L_0\int _{\alpha }\left ( k-\bar{k} \right )^{2}k_{s}^{2}\,ds\leq \frac{6L_0}{\pi }K_{\mbox{osc}}\left\|k_{ss} \right\|_{2}^{2},$$
 $$6\bar{k}L_0\int _{\alpha }\left ( k-\bar{k} \right )k_{s}^{2}\,ds\leq 24\omega L_0 \sqrt{K_{\mbox{osc}}} \left\|k_{ss} \right\|_{2 }^{2}\mbox{.}$$ and\begin{align*}
 2L_0\bar{k}^{2}\left\|k_{s} \right\|_{2}^{2}&\leq 8\omega ^{2}L_0\left\|k_{ss} \right\|_{2}^{2}.
\end{align*}

We estimate the term $4L_0 \,\bar{k}\,\lambda(t)  \int _{\alpha } (k - \overline{k})^3 \, ds $ as follows:
\begin{multline}\label{ek51}
 \int_{\alpha} k_s^2\, ds = - \int_{\alpha} k\, k_{ss} \, ds 
= - \int_{\alpha} (k - \bar{k})\, k_{ss} \, ds \\
\leq \left( \int_{\alpha} (k - \bar{k})^2 \, ds \right)^{\frac{1}{2}} \left( \int_{\alpha} k_{ss}^2 \, ds \right)^{\frac{1}{2}}
= \frac{K_{\mbox{osc}}^{\frac{1}{2}}}{L_{0}^{\frac{1}{2}}}\,\left\| k_{ss}\right\|_2\mbox{,}
\end{multline} 
and 
\begin{multline}\label{ek52}
\int_{\alpha} (k - \bar{k})^3 \, ds 
\leq \|k - \bar{k}\|_{\infty} \int_{\alpha} (k - \bar{k})^2 \, ds 
 \leq \sqrt{\frac{2L_0}{\pi }}\left\| k_s\right\|_{2}\,\int _{\alpha }\left ( k-\bar{k} \right )^2\,ds\\
 \leq \frac{L_0}{\pi }\sqrt{\frac{2L_0}{\pi }}\left\| k_{ss}\right\|_{2}\,\int _{\alpha }\left ( k-\bar{k} \right )^2\,ds
 =\frac{1}{\pi }\sqrt{\frac{2L_0}{\pi }}\left\| k_{ss}\right\|_{2}K_{\mbox{osc}} \mbox{;}
 \end{multline}
therefore,
\begin{align*}
4L_0 \,\bar{k}\, \lambda_2(t) \int_{\alpha} (k - \bar{k})^3 \, ds 
&\leq 4L_0\, \bar{k}\,\left | \lambda_2 \left ( t \right )\right |\,\int _{\alpha }\left ( k-\bar{k} \right )^{3}\,ds\,\mbox{.}
\end{align*}
By using Lemma \ref{c212}, when $n=1$, we have 
\begin{multline*}
 4L_0\, \bar{k}\,\left | \lambda_2 \left ( t \right )\right |\,\int _{\alpha }\left ( k-\bar{k} \right )^{3}\,ds
 \leq \frac{4L_{0}^{2}\bar{k}}{\left ( 2\omega\, \pi  \right )^{2}}\int _{\alpha }k_{s}^2\,ds\,\int _{\alpha }\left ( k-\bar{k} \right )^{3}\,ds\\
 \leq \frac{2L_{0}} {\omega\, \pi  }\int _{\alpha }k_{s}^2\,ds\,\int _{\alpha }\left ( k-\bar{k} \right )^{3}\,ds,
\end{multline*}
thus, by using \eqref{ek51} and \eqref{ek52}, we obtain 
\begin{equation*}
4L_0 \,\bar{k}\, \lambda_2(t) \int_{\alpha} (k - \bar{k})^3 \, ds 
\leq  \frac{2L_0}{\pi^2 \omega }\sqrt{\frac{2}{\pi }} K_{\mbox{osc}}^{\frac{3}{2}} \|k_{ss}\|_2^2.
\end{equation*}
 For $-2L_0\lambda_2(t)\left\|k_{s} \right\|_{2}^{2}$, we have \begin{multline*}
 -2L_0\lambda_2(t)\left\|k_{s} \right\|_{2}^{2}\leq \frac{2L_0\int _{\alpha }k_{s}^{2}\,ds}{\int _{\alpha }k^{2}\,ds}\,\int _{\alpha }k_{s}^{2}\,ds
 \leq \frac{L_{0}^{2}}{2\left ( \omega\, \pi  \right )^2}\,\int _{\alpha }k_{s}^{2}\,ds\,\int _{\alpha }k_{s}^{2}\,ds\\
 \leq \frac{L_{0}^{2}}{2\left ( \omega\, \pi  \right )^2}\,  \int_{\alpha} (k - \bar{k})^2 \, ds  \int_{\alpha} k_{ss}^2 \, ds
 \leq \frac{L_{0}}{2\left ( \omega\, \pi  \right )^2}\,K_{\mbox{osc}}\left\| k_{ss}\right\|_{2}^{2}\mbox{.}
\end{multline*}
By inserting these estimates into  Corollary  \ref{kosk5}, we complete the proof.
\end{proof}

The proof of Theorem \ref{T:main3} is now completed using the same approach as in the previous section.

\backmatter

%\bmhead{Supplementary information}

%If your article has accompanying supplementary file/s please state so here. 

%Authors reporting data from electrophoretic gels and blots should supply the full unprocessed scans for key as part of their Supplementary information. This may be requested by the editorial team/s if it is missing.

%Please refer to Journal-level guidance for any specific requirements.

\bmhead{Acknowledgements}
The research of the first author was supported by a postgraduate scholarship from the Department of Mathematics, College of Science, Imam Abdulrahman Bin Faisal University, P. O. Box 1982, Dammam, Saudi Arabia.  Part of this research was completed while the second author was visiting the University of Science and Technology, China under a Chinese Academy of Sciences Presidents' International Fellowship Initiative visiting fellowship, grant number 2024PVA0042.  Part of this research was completed while the second author was supported by Discovery Project DP250103952 of the Australian Research Council.  The authors are grateful for this support.  The authors are also grateful to the referees of an earlier version of this article for their feedback that has led to improvements.

\section*{Statements and Declarations}

%Some journals require declarations to be submitted in a standardised format. Please check the Instructions for Authors of the journal to which you are submitting to see if you need to complete this section. If yes, your manuscript must contain the following sections under the heading `Declarations':

\begin{itemize}
\item Funding

The research of the first author was supported by a postgraduate scholarship from the Department of Mathematics, College of Science, Imam Abdulrahman Bin Faisal University, P. O. Box 1982, Dammam, Saudi Arabia.  Part of this research was completed while the second author was visiting the University of Science and Technology, China under a Chinese Academy of Sciences Presidents' International Fellowship Initiative visiting fellowship, grant number 2024PVA0042.  Part of this research was completed while the second author was supported by Discovery Project DP250103952 of the Australian Research Council. 

\item Competing interests 

The authors declare no competing interests.

\item Ethics approval and consent to participate

Not applicable.

\item Consent for publication

The authors consent to the publication of this work.

\item Data availability 

Not applicable.

\item Materials availability

Not applicable.

\item Code availability 

Not applicable.

\item Author contribution

Each author contributed equally to the production of this work.
\end{itemize}

%\noindent
%If any of the sections are not relevant to your manuscript, please include the heading and write `Not applicable' for that section. 

%%===================================================%%
%% For presentation purpose, we have included        %%
%% \bigskip command. Please ignore this.             %%
%%===================================================%%
%\bigskip
%\begin{flushleft}%
%Editorial Policies for:

%\bigskip\noindent
%Springer journals and proceedings: \url{https://www.springer.com/gp/editorial-policies}

%\bigskip\noindent
%Nature Portfolio journals: \url{https://www.nature.com/nature-research/editorial-policies}

%\bigskip\noindent
%\textit{Scientific Reports}: \url{https://www.nature.com/srep/journal-policies/editorial-policies}

%\bigskip\noindent
%BMC journals: \url{https://www.biomedcentral.com/getpublished/editorial-policies}
%\end{flushleft}

\begin{appendices}

%%=============================================%%
%% For submissions to Nature Portfolio Journals %%
%% please use the heading ``Extended Data''.   %%
%%=============================================%%

%%=============================================================%%
%% Sample for another appendix section			       %%
%%=============================================================%%

%% \section{Example of another appendix section}\label{secA2}%
%% Appendices may be used for helpful, supporting or essential material that would otherwise 
%% clutter, break up or be distracting to the text. Appendices can consist of sections, figures, 
%% tables and equations etc.

\end{appendices}

%%===========================================================================================%%
%% If you are submitting to one of the Nature Portfolio journals, using the eJP submission   %%
%% system, please include the references within the manuscript file itself. You may do this  %%
%% by copying the reference list from your .bbl file, paste it into the main manuscript .tex %%
%% file, and delete the associated \verb+\bibliography+ commands.                            %%
%%===========================================================================================%%

% common bib file
%% if required, the content of .bbl file can be included here once bbl is generated
%%\input sn-article.bbl

\end{document}